\documentclass[a4paper]{cas-sc}

\usepackage{amsmath,amssymb,amsthm,mathtools,mathrsfs}
\usepackage[british]{babel}
\usepackage[T1]{fontenc}
\usepackage[utf8]{inputenc}
\usepackage{xcolor}

\usepackage[numbers,sort&compress]{natbib}
\allowdisplaybreaks

\newcommand{\R}{\mathbb R}
\newcommand{\Torus}{\mathbb T}
\newcommand{\eps}{\varepsilon}
\newcommand{\dt}{\partial_t}
\newcommand{\diver}{\operatorname{div}}
\newcommand{\LL}{\mathbf L}
\newcommand{\Sbulk}{s_{\rm bulk}}
\newcommand{\Stot}{\mathscr S}
\newcommand{\Energy}{\mathscr E}
\newcommand{\Avail}{\mathscr A}
\newcommand{\Diss}{\mathscr D}
\newcommand{\Vh}{V_h}
\newcommand{\Ih}{I_h}

\newcommand{\deltat}{\delta_\tau}
\newcommand{\pair}[2]{\left\langle #1,#2\right\rangle}
\newcommand{\norm}[1]{\left\lVert #1\right\rVert}

\newcommand{\dx}{\,\mathrm dx}
\newcommand{\dtm}{\,\mathrm dt}
\newcommand{\QT}{(0,T)\times\Omega}
\newcommand{\vex}{\mathrm{vex}}
\newcommand{\cav}{\mathrm{cav}}
\newcommand{\sing}{\mathrm{sing}}
\newcommand{\reg}{\mathrm{reg}}

\newtheorem{assumption}{Assumption}[section]
\newtheorem{theorem}[assumption]{Theorem}
\newtheorem{lemma}[assumption]{Lemma}
\newtheorem{proposition}[assumption]{Proposition}
\newtheorem{corollary}[assumption]{Corollary}
\theoremstyle{definition}
\newtheorem{definition}[assumption]{Definition}
\newtheorem{remark}[assumption]{Remark}

\def\aaron#1{{\color{purple} #1}}

\begin{document}
\let\WriteBookmarks\relax
\def\floatpagepagefraction{1}
\def\textpagefraction{.001}

\shorttitle{Generalised dissipative solutions for a non-isothermal phase-field system}
\shortauthors{A. Brunk and M. Fritz}

\title[mode=title]{Generalised dissipative solutions for a non-isothermal phase-field system: existence, weak--strong uniqueness, and long-time behaviour}

\author[1]{Aaron Brunk}[orcid=0000-0003-4987-2398]
\ead{abrunk@uni-mainz.de}
\affiliation[1]{organization={Institute of Mathematics, Johannes Gutenberg University Mainz},
                 city={Mainz},
                 country={Germany}}
\cortext[1]{Corresponding author}
\author[2]{Marvin Fritz}[orcid=0000-0002-8360-7371]
\ead{marvin.fritz@univie.ac.at}
\affiliation[2]{organization={Faculty of Mathematics, University of Vienna},
                 city={Vienna},
                 country={Austria}}


\begin{abstract}
We study a thermodynamically consistent non-isothermal phase-field system coupling two order parameters and the inverse temperature through a fully non-diagonal Onsager mobility. The model describes the interaction of mass diffusion, heat conduction, and local phase relaxation while conserving mass and internal energy and producing entropy. Global generalised dissipative weak solutions are constructed using a fully discrete approximation. The discrete scheme conserves mass and internal energy and satisfies a discrete entropy inequality. An availability estimate, together with a dimension-adapted barrier, yields strict positivity at fixed mesh and uniform estimates. Compactness then allow passage to the continuous system. Concentration of the singular part of the internal energy is represented by a non-negative defect measure. The entropy–availability structure yields finite dissipation on the infinite time interval and the existence of stationary $\omega$-limit states. Finally, a relative-entropy argument establishes weak–strong uniqueness whenever the weak and strong solutions remain in a bounded thermodynamic state range.
\end{abstract}

\begin{keywords}
non-isothermal phase-field system \sep Cahn--Hilliard equation \sep
generalised dissipative weak solution \sep thermodynamic consistency \sep
finite-element approximation \sep weak--strong uniqueness \sep long-time behaviour

\MSC[2020]{35A01, 35D30, 35K55, 35B35, 35B40, 65M60, 80A22}
\end{keywords}

\maketitle

\section{Introduction}\label{sec:intro}

Phase-field models provide a versatile framework for interfacial phenomena
such as solidification, phase separation, microstructure evolution, tumour
growth, fracture, and multiphase transformations; see, for example,
\cite{fritz2023tumor}. The Cahn--Hilliard equation
\cite{cahn1958free} is a fundamental model for isothermal phase
transitions. In many applications, however, heat conduction, latent heat
exchange, and temperature-dependent interactions cannot be neglected, and
the phase evolution must be coupled consistently to the balances of mass
and internal energy and to the second law of thermodynamics.

A systematic thermodynamic derivation of non-isothermal phase-field models
was developed by Penrose and Fife \cite{PenroseFife1990} and by Alt and
Pawlow
\cite{alt1990dynamics,alt1991mathematical,alt1992mathematical,
alt1991existence,alt1995thermodynamical}. Their Lyapunov-functional
analysis also relates finite entropy production to stationary states and,
near isolated equilibria, to nonlinear asymptotic stability; see
\cite[Proposition~2.6]{alt1991mathematical} and
\cite[Section~3.4]{alt1992mathematical}. Weak solutions for
Penrose--Fife systems and related models have subsequently been studied
under a variety of constitutive assumptions in
\cite{Colli1998,MiranvilleRoccaSchimperna2013,Marveggio2021,
liu2025nonisothermal,cavalleri2025well}. Stabilisation, optimal control, and
temperature-control problems are considered in
\cite{AzmiFritzRod24-arx,colli2023optimal,colli2023optimalb,
giulia2026optimal,cavalleri2025temperature}. Systems with several order
parameters arise, for instance, in models of sintering, grain coalescence,
and thermomechanical phase transformations
\cite{Oyedeji2023,maraldi2011phase}, while non-isothermal
Cahn--Hilliard systems coupled to fluid flow have been investigated in
\cite{deliyianni2026temperature}. Structure-preserving approximations are
developed in
\cite{kahnt2021numerical,brunk2025structure,brunk2026structure}, and
analytical techniques for diffuse-interface systems with cross-diffusion
and non-diagonal kinetic structures can be found in
\cite{alt1983quasilinear,abels2009existence,
BrunkEggerOyedejiYangXu2022,HuoJuengelTzavaras2022}. Across these settings,
positivity of the temperature, conservation of internal energy, and entropy
production are central structural requirements.

Beyond existence, weak--strong uniqueness gives a natural stability
principle for dissipative systems. The relative entropy method has proved
effective for thermodynamically consistent fluid models
\cite{FeireislWeakStrong2012} and has also been adapted to
diffuse-interface systems
\cite{LasarzikRoccaSchimperna2019,brunk2023existence}. In the present
setting, the corresponding relative functional is expressed through the
availability and is compatible with the system's entropy variables.

We consider a non-isothermal phase-field system for two coupled order
parameters $\rho$ and $\eta$ and the inverse temperature $\theta=1/T$.
The variable $\rho$ follows a conserved Cahn--Hilliard-type evolution,
whereas $\eta$ is non-conserved. Their chemical potentials, the heat flux,
and the local relaxation mechanism are coupled through a symmetric,
uniformly positive definite Onsager operator whose off-diagonal entries are
retained. The model conserves total mass and internal energy and produces
entropy through a positive quadratic dissipation.

Our existence proof is based on an implicit fully discrete approximation
with continuous piecewise affine finite elements developed in \cite{BrunkHabrichOyedejiYangXu}. The phase-dependent part
of the reduced free energy is treated by a convex--concave splitting, while
all temporal and nonlinear thermodynamic terms use the consistent
$L^2$-pairing. Thus neither mass lumping nor nodal quadrature is needed.
The discrete scheme conserves exactly global mass and internal
energy and satisfies a discrete entropy inequality; see
Theorem~\ref{thm:discrete-structure}.

The main difficulty is strict positivity of the discrete inverse
temperature. We introduce a dimension-adapted singular contribution to the
reduced free energy whose associated inverse moment controls the approach to
zero. For piecewise affine functions, the condition $q\ge d$ yields a
fixed-mesh nodal barrier and is sharp for this argument; in three dimensions
the minimal choice is $q=3$. Combined with a coercive availability estimate,
this barrier permits a Brouwer degree construction of a positive solution at
every time step; see
Lemma~\ref{lem:fe-positivity-barrier} and
Theorem~\ref{thm:one-step-existence}. The availability also controls the
spatial mean of the inverse temperature, so a generalised Poincar\'e
inequality upgrades the entropy dissipation to a full
$L^2(0,T;H^1(\Omega))$ estimate.

The uniform estimates yield compactness of the phase variables by a
discrete Aubin--Lions argument. For the inverse temperature, the strict
monotonicity of the internal energy and the logarithmic thermal term provide
time-translation control, while the low- and high-temperature estimates and
a Poincar\'e inequality on subsets of positive measure give strong
convergence in $L^2(\QT)$. Consequently, for arbitrary
$h,\tau\to0$, without a coupling condition, a subsequence of the discrete
solutions converges to a global generalised dissipative weak solution; see
Theorem~\ref{thm:convergence}. All lower-order thermal nonlinearities are
identified strongly. Possible concentration of the critical singular
internal-energy term is represented by a non-negative Radon measure in the
limiting energy balance, and sufficient conditions for its elimination are
given in Proposition~\ref{prop:no-defect}.

We further establish weak--strong uniqueness by a relative-entropy
argument. On a common bounded thermodynamic state range, the relative
functional is locally coercive and its evolution controls the full
non-diagonal Onsager dissipation, the interfacial terms, and the possible
energy defect. A Gronwall argument shows that a generalised dissipative weak
solution coincides with a sufficiently regular strong solution with the
same initial data; the concentration defect then vanishes. This is stated
in Theorem~\ref{thm:bounded-range-wsu}. 

Finally, for global trajectories, the same entropy--availability inequality gives
finite total Onsager dissipation.  We prove that every global generalised
dissipative weak solution possesses a sequence of times tending to infinity
at which both phase variables and the inverse temperature converge strongly
in $H^1$ to a stationary state.  The critical inverse power may develop an
additional concentration along this long-time sequence; it is retained as a
non-negative asymptotic energy defect.  Thus the result identifies
stationary $\omega$-limit states without imposing the isolation assumptions
needed for convergence of the entire trajectory; see
Theorem~\ref{thm:stationary-omega-limit}.

The remainder of the paper is organised as follows.
Section~\ref{sec:thermodynamics} presents the thermodynamic formulation, the
dimension-adapted constitutive assumptions, and the notion of a generalised
dissipative weak solution.  The main existence and convergence theorem is
stated at the beginning of Section~\ref{sec:existence}; its proof is then
developed through the discrete scheme, the availability estimates,
fixed-mesh solvability, compactness, and the identification of the limiting
energy defect.  Section~\ref{sec:wsu} contains the relative-entropy analysis,
the weak--strong uniqueness theorem, and the long-time identification of
stationary $\omega$-limit states.

\section{Thermodynamic structure and constitutive assumptions}
\label{sec:thermodynamics}

\subsection{Thermodynamic formulation}
\label{sec:model}

Let $\Omega=\Torus^d$, $1\le d\le3$, and let
$$
 \rho=\rho(t,x),\qquad \eta=\eta(t,x),\qquad
 \theta=\theta(t,x)=T(t,x)^{-1}>0.
$$
We write $z=(\rho,\eta)$.  The reduced bulk Helmholtz free-energy density is
$\psi=\psi(z,\theta)$, and the reduced free energy including interfacial
terms is
\begin{equation}
 \Psi(z,\theta,\nabla z)
 =\psi(z,\theta)
 +\frac{\gamma_\rho}{2}|\nabla\rho|^2
 +\frac{\gamma_\eta}{2}|\nabla\eta|^2,
 \qquad \gamma_\rho,\gamma_\eta>0.
 \label{eq:Psi}
\end{equation}
The word {reduced} is essential: in the inverse-temperature variable
we use
\begin{equation}
 e(z,\theta)=\partial_\theta\psi(z,\theta),
 \qquad
 \Sbulk(z,\theta)=\theta e(z,\theta)-\psi(z,\theta).
 \label{eq:thermo}
\end{equation}
The total entropy functional is
\begin{equation}
 \Stot(z,\theta)
 =\int_\Omega\Sbulk(z,\theta)\dx
 -\frac{\gamma_\rho}{2}\norm{\nabla\rho}_{L^2}^2
 -\frac{\gamma_\eta}{2}\norm{\nabla\eta}_{L^2}^2.
 \label{eq:total-entropy}
\end{equation}
The chemical potentials are
\begin{equation}
 \mu_\rho=-\gamma_\rho\Delta\rho+\partial_\rho\psi(z,\theta),
 \qquad
 \mu_\eta=-\gamma_\eta\Delta\eta+\partial_\eta\psi(z,\theta).
 \label{eq:chemical-potentials}
\end{equation}
Since $e=\partial_\theta\psi$, differentiation of
$\Sbulk=\theta e-\psi$ gives, for every variation,
$$
 D\Sbulk(z,\theta)[\delta z,\delta\theta]
 =\theta\,De(z,\theta)[\delta z,\delta\theta]
  -D_z\psi(z,\theta)\cdot\delta z.
$$
After integration over the torus, the variations of the two interfacial
terms are integrated by parts.  With
\eqref{eq:chemical-potentials} this yields the Gibbs relation
\begin{equation}
 D\Stot(z,\theta)[\delta z,\delta\theta]
 =(\theta,\delta e)-(\mu_\rho,\delta\rho)
 -(\mu_\eta,\delta\eta).
 \label{eq:gibbs}
\end{equation}

The system then reads
\begin{subequations}
\label{eq:model-system}
\begin{align}
 \dt\rho
 &=\diver\bigl(\LL_{11}\nabla\mu_\rho
       -\LL_{12}\nabla\theta+\LL_{13}\mu_\eta\bigr),
 \label{eq:model-rho}\\
 \dt e(z,\theta)
 &=\diver\bigl(\LL_{12}^{\top}\nabla\mu_\rho
       -\LL_{22}\nabla\theta+\LL_{23}\mu_\eta\bigr),
 \label{eq:model-energy}\\
 \dt\eta
 &=-\LL_{13}\cdot\nabla\mu_\rho
       +\LL_{23}\cdot\nabla\theta-\LL_{33}\mu_\eta,
 \label{eq:model-eta}
\end{align}
\end{subequations}
completed by \eqref{eq:chemical-potentials}, where the mobility is assumed to admit the block form
\begin{equation}
 \LL(z,\theta)=
 \begin{pmatrix}
  \LL_{11}&\LL_{12}&\LL_{13}\\
  \LL_{12}^{\top}&\LL_{22}&\LL_{23}\\
  \LL_{13}^{\top}&\LL_{23}^{\top}&\LL_{33}
 \end{pmatrix}.
 \label{eq:L}
\end{equation}
Here $\LL_{11},\LL_{12},\LL_{22}\in\R^{d\times d}$,
$\LL_{13},\LL_{23}\in\R^d$, and $\LL_{33}\in\R$. 

If
$
 Z=(\nabla\mu_\rho,-\nabla\theta,\mu_\eta)$,
then smooth periodic solutions satisfy
\begin{equation}
 \frac{\mathrm d}{\mathrm dt}\int_\Omega\rho\dx=0,
 \qquad
 \frac{\mathrm d}{\mathrm dt}\int_\Omega e(z,\theta)\dx=0,
 \qquad
 \frac{\mathrm d}{\mathrm dt}\Stot(z,\theta)
 =\int_\Omega Z^\top\LL Z\dx\ge0.
 \label{eq:formal-structure}
\end{equation}
Indeed, the first two identities in \eqref{eq:formal-structure} follow by integrating
\eqref{eq:model-rho} and \eqref{eq:model-energy} over the periodic domain.
For the entropy, use \eqref{eq:gibbs} with the time derivatives and insert
the three balance laws.  Periodic integration by parts gives
\begin{align*}
 \frac{\mathrm d}{\mathrm dt}\Stot
 &=(\theta,\partial_t e)
   -(\mu_\rho,\partial_t\rho)
   -(\mu_\eta,\partial_t\eta)=(-\nabla\theta,\mathcal J_e)
   +(\nabla\mu_\rho,\mathcal J_\rho)
   +(\mu_\eta,\mathcal R_\eta)
 =\int_\Omega Z^\top\LL Z\dx,
\end{align*}
where $\mathcal J_\rho$, $\mathcal J_e$, and $\mathcal R_\eta$ denote the
three Onsager expressions in \eqref{eq:model-system}.  The last quantity is
non-negative by the following assumption:

\begin{assumption}[bounded Onsager matrix]\label{ass:Onsager}
We assume throughout
that $\LL$ is continuous, bounded, symmetric, and uniformly positive
definite: there exist $0<\lambda_0\le\lambda_1$ such that
\begin{equation}
 \lambda_0|\zeta|^2\le \zeta^\top\LL(z,\theta)\zeta
 \le\lambda_1|\zeta|^2
 \qquad
 \text{for every }\zeta\in\R^{2d+1}.
 \label{eq:L-ellipticity}
\end{equation}
\end{assumption}


\subsection{Dimension-adapted thermal barrier and coercive availability}
\label{sec:potential}

The thermal modification used below follows a classical regularisation
principle from the analysis of heat-conducting fluids. Singular
inverse-temperature contributions are introduced at the approximation level
to prevent degeneration of the absolute temperature, while radiative terms
provide superlinear control in the high-temperature regime; see
\cite[Sections~1.4.3, 3.3.1, and 3.4.2]{FeireislNovotny2017} and
\cite{Feireisl2004Heat,DucometFeireisl2005}. In terms of the absolute
temperature $T=\theta^{-1}$, the contribution used here generates energy
and entropy terms proportional to $T^q$ and $T^{q-1}$, respectively; for
$q=4$ this is the standard radiative pair. Our choice $q\ge d$ is adapted
to the finite-element positivity argument: it is exactly the threshold at
which the inverse moment excludes vanishing at a vertex of a positive
piecewise affine function.

\begin{assumption}[Modified reduced free energy]
\label{ass:potential}
Let $q>1$ satisfy
\begin{equation}
 q\ge d,
 \label{eq:q-dimension}
\end{equation}
let $\eps_1,\eps_q>0$, and let $F_0,F_1$ be non-negative polynomials on
$\R^2$ of degree at most four.  We set
\begin{equation}
 \psi(z,\theta)
 =\ln\theta-\eps_1\theta(\ln\theta-1)
 -\frac{\eps_q}{q-1}\theta^{1-q}
 -F_0(z)+\theta F_1(z),
 \qquad \theta>0.
 \label{eq:modified-potential}
\end{equation}
We assume
\begin{equation}
 |D F_j(z)|\le C(1+|z|^3),
 \qquad
 |D^2F_j(z)|\le C(1+|z|^2),
 \qquad j=0,1,
 \label{eq:phase-growth}
\end{equation}
and that there is a number $a>0$ for which
\begin{equation}
 aF_1(z)-F_0(z)\ge c_a|z|^4-C_a.
 \label{eq:phase-availability-coercivity}
\end{equation}
\end{assumption}

\begin{remark}[Scope of the thermal modification]
\label{rem:fixed-thermal-parameters}
\phantom{x}
\begin{itemize}
    \item The parameters $\eps_1$ and $\eps_q$ are fixed positive constitutive
parameters throughout.  Thus all existence, convergence, and stability
results below concern the modified model \eqref{eq:modified-potential}.  No
vanishing-regularisation limit $\eps_1,\eps_q\downarrow0$ is asserted; such
a limit would require estimates uniform in these parameters and a separate
identification of the singular thermal terms.
\item Instead of modifying the free energy, one may alternatively obtain additional thermal control from a heat conductivity with suitable power-law growth in the temperature; see, for example, \cite{Michela}.
\end{itemize}
\end{remark}

The minimal dimension-adapted choice in three dimensions is $q=3$, which
adds $-\eps_3(2\theta^2)^{-1}$ to $\psi$ and
$\eps_3\theta^{-3}$ to $e$.  The concrete phase energy used in \cite{Oyedeji2023,BrunkHabrichOyedejiYangXu} also fits Assumption~\ref{ass:potential}.  Namely, let
\begin{equation}
 F(\rho)=\rho^2(1-\rho)^2,
 \qquad
 Q(\rho,\eta)=(\rho-1)^2+12\eta^2(1-\eta)^2,
 \label{eq:FQ}
\end{equation}
and take
\begin{equation}
 F_1=C_1F+D_1Q,
 \qquad
 F_0=C_2F+D_2Q,
 \qquad C_i,D_i>0.
 \label{eq:F0F1}
\end{equation}
Then \eqref{eq:phase-availability-coercivity} holds for every
\begin{equation}
 a>\max\left\{\frac{C_2}{C_1},\frac{D_2}{D_1}\right\}.
 \label{eq:a-choice}
\end{equation}
Indeed, both coefficients in
$
 aF_1-F_0=(aC_1-C_2)F+(aD_1-D_2)Q
$
are then positive, and the explicit polynomials satisfy
$F(\rho)+Q(\rho,\eta)\ge c(|\rho|^4+|\eta|^4)-C$.

\begin{lemma}[Thermodynamic identities]
\label{lem:thermo-identities}
Under Assumption~\ref{ass:potential},
\begin{align}
 e(z,\theta)
 &=\frac1\theta-\eps_1\ln\theta
   +\eps_q\theta^{-q}+F_1(z),
 \label{eq:energy-density}\\
 \Sbulk(z,\theta)
 &=1-\ln\theta-\eps_1\theta
   +\frac{q}{q-1}\eps_q\theta^{1-q}+F_0(z),
 \label{eq:entropy-density}\\
 \partial_\theta e(z,\theta)
 &=\partial_{\theta\theta}\psi(z,\theta)
 =-\frac1{\theta^2}-\frac{\eps_1}{\theta}
   -q\eps_q\theta^{-q-1}<0.
 \label{eq:energy-monotonicity}
\end{align}
Thus $\psi$ is strictly concave in the inverse temperature and, for fixed
$z$, the map $\theta\mapsto e(z,\theta)$ is strictly decreasing.
\end{lemma}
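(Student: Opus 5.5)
The plan is to differentiate \eqref{eq:modified-potential} term by term in $\theta$ and then insert the result into the definitions \eqref{eq:thermo}. I will keep track of each of the five summands of $\psi$ separately, since every step is linear in $\psi$.

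\textbf{Step 1: the energy density.} First I compute $e=\partial_\theta\psi$. The summands contribute as follows:
\begin{itemize}
\item $\partial_\theta\ln\theta=\theta^{-1}$;
\item $\partial_\theta[-\eps_1\theta(\ln\theta-1)]=-\eps_1(\ln\theta-1)-\eps_1=-\eps_1\ln\theta$;
\item $\partial_\theta[-\tfrac{\eps_q}{q-1}\theta^{1-q}]=-\tfrac{\eps_q}{q-1}(1-q)\theta^{-q}=\eps_q\theta^{-q}$, which uses $q>1$ so that $q-1\neq0$;
\item $\partial_\theta[-F_0(z)]=0$;
\item $\partial_\theta[\theta F_1(z)]=F_1(z)$.
\end{itemize}
Summing these gives \eqref{eq:energy-density}.

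\textbf{Step 2: the bulk entropy.} Next I form $\Sbulk=\theta e-\psi$, again summand by summand:
\begin{itemize}
\item $\theta\cdot\theta^{-1}-\ln\theta=1-\ln\theta$;
\item $-\eps_1\theta\ln\theta+\eps_1\theta(\ln\theta-1)=-\eps_1\theta$;
\item $\eps_q\theta^{1-q}+\tfrac{\eps_q}{q-1}\theta^{1-q}=\tfrac{q}{q-1}\eps_q\theta^{1-q}$;
\item $\theta F_1-\theta F_1+F_0=F_0$.
\end{itemize}
This yields \eqref{eq:entropy-density}.

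\textbf{Step 3: monotonicity.} Differentiating \eqref{eq:energy-density} once more in $\theta$ gives
$$
\partial_\theta e=-\theta^{-2}-\eps_1\theta^{-1}-q\eps_q\theta^{-q-1}.
$$
The term $F_1(z)$ drops out because it does not depend on $\theta$. For $\theta>0$, $\eps_1,\eps_q>0$ and $q>1$, each of the three terms is strictly negative, which proves \eqref{eq:energy-monotonicity}.

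\textbf{Step 4: conclusions.} Since $\partial_{\theta\theta}\psi<0$ on $(0,\infty)$, $\psi(z,\cdot)$ is strictly concave for each fixed $z$. Since $\partial_\theta e<0$ on $(0,\infty)$, the mean value theorem shows that $\theta\mapsto e(z,\theta)$ is strictly decreasing.

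There is no real obstacle. The only points requiring care are the sign bookkeeping in the $\theta^{1-q}$ term and the combination of the $\eps_1$ logarithmic terms, both handled in Steps 1 and 2.
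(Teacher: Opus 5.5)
Your proposal is correct and follows the same route as the paper: differentiate \eqref{eq:modified-potential} term by term, then insert the result into $\Sbulk=\theta e-\psi$. Your computations of the $\eps_1$-logarithmic terms, the $\theta^{1-q}$ coefficient $\tfrac{q}{q-1}$, and the sign of $\partial_\theta e$ are all accurate.
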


\begin{proof}
Differentiate \eqref{eq:modified-potential} and use
$\Sbulk=\theta e-\psi$.
\end{proof}

For the fixed number $a$ in
\eqref{eq:phase-availability-coercivity}, define the availability and its bulk density via
\begin{align}
 \Avail_a(z,\theta)
 &:=a\int_\Omega e(z,\theta)\dx-\Stot(z,\theta).
 \label{eq:availability-def}\\
 a e-\Sbulk
 ={}\eps_1\theta+a\eps_q\theta^{-q}
 +a\theta^{-1}-&\frac{q}{q-1}\eps_q\theta^{1-q}
 +(1-a\eps_1)\ln\theta-1+aF_1(z)-F_0(z).
 \label{eq:availability-density}
\end{align}

\begin{lemma}[Coercivity of the availability]
\label{lem:availability-coercivity}
There are constants $c,C>0$, depending only on the constitutive
coefficients and on $a$, such that
\begin{equation}
 a e(z,\theta)-\Sbulk(z,\theta)
 \ge c\bigl(\theta+\theta^{-q}+|z|^4\bigr)-C
 \qquad(z\in\R^2,\ \theta>0).
 \label{eq:availability-coercive}
\end{equation}
Moreover,
\begin{equation}
 |e(z,\theta)|+|\Sbulk(z,\theta)|
 \le C\bigl(1+\theta+\theta^{-q}+|z|^4\bigr).
 \label{eq:e-s-growth}
\end{equation}
\end{lemma}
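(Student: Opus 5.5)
The plan is to split the availability density \eqref{eq:availability-density} into a purely thermal part and a phase part, and to bound each separately. The phase part $aF_1(z)-F_0(z)$ is bounded below by $c_a|z|^4-C_a$ directly from \eqref{eq:phase-availability-coercivity}. It therefore remains to show that the one-variable function
\[
 g(\theta)=\eps_1\theta+a\eps_q\theta^{-q}+a\theta^{-1}-\frac{q}{q-1}\eps_q\theta^{1-q}+(1-a\eps_1)\ln\theta-1
\]
satisfies $g(\theta)\ge c(\theta+\theta^{-q})-C$ for all $\theta>0$. Adding the two lower bounds then gives \eqref{eq:availability-coercive}, with $c$ the minimum of the two constants.

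For $g$ I would use Young-type absorption in the two regimes.

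\emph{Small $\theta$.} The term $a\eps_q\theta^{-q}$ dominates. Since $1-q<0$ and $-1<0$ are strictly larger exponents than $-q$ (using $q>1$), we have
\[
 \tfrac{q}{q-1}\eps_q\theta^{1-q}\le \tfrac{a\eps_q}{4}\theta^{-q}+C \qquad\text{for } \theta\in(0,1].
\]
This follows from $\theta^{1-q}=\theta\cdot\theta^{-q}$ and a threshold argument, or from Young's inequality with exponents $q/(q-1)$ and $q$ applied to $\theta^{1-q}\cdot 1$. Similarly, $|\ln\theta|\le C_\delta\theta^{-\delta}$ for small $\delta$, so $|1-a\eps_1|\,|\ln\theta|\le \tfrac{a\eps_q}{4}\theta^{-q}+C$ on $(0,1]$. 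The term $a\theta^{-1}$ is nonnegative and can be dropped.

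\emph{Large $\theta$.} On $[1,\infty)$ the term $\eps_1\theta$ dominates: $|\ln\theta|\le \tfrac{\eps_1}{2|1-a\eps_1|+1}\theta+C$, and $\theta^{1-q}\le1$ is bounded.

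Combining the two regimes gives $g(\theta)\ge \tfrac{\eps_1}{2}\theta+\tfrac{a\eps_q}{2}\theta^{-q}-C$. On each subinterval the complementary power is bounded, that is, $\theta\le1$ on $(0,1]$ and $\theta^{-q}\le1$ on $[1,\infty)$, so adding it costs only a constant.

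For the growth bound \eqref{eq:e-s-growth}, I would read off the explicit formulas \eqref{eq:energy-density} and \eqref{eq:entropy-density} term by term. We have $\theta^{-1}\le 1+\theta^{-q}$ and $\theta^{1-q}\le 1+\theta^{-q}$, both by splitting at $\theta=1$, and $|\ln\theta|\le \theta+\theta^{-q}$ up to a constant. Finally, $F_0$ and $F_1$ are polynomials of degree at most four, so $|F_j(z)|\le C(1+|z|^4)$.

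There is no genuine obstacle here. The only point requiring care is the sign of the negative term $-\tfrac{q}{q-1}\eps_q\theta^{1-q}$ together with the possibly negative coefficient $1-a\eps_1$ of $\ln\theta$. Both are handled by the strict exponent gap $q>q-1$, which is where $q>1$ is used, and by the sublinear growth of the logarithm at both ends.
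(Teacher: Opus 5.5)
Your proposal is correct and takes essentially the same approach as the paper. You absorb the possibly negative lower-order thermal terms into $a\eps_q\theta^{-q}$ on $(0,1]$ and the logarithm into $\eps_1\theta$ on $[1,\infty)$, add the phase coercivity \eqref{eq:phase-availability-coercivity}, and read the growth bound off the explicit formulas term by term; the paper does the same.
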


\begin{proof}
For every $\delta>0$ and $0<\theta\le1$,
$$
 \theta^{1-q}+\theta^{-1}+|\ln\theta|
 \le \delta\theta^{-q}+C_\delta.
$$
Choose $\delta$ small enough to absorb every possibly negative lower-order
term into $a\eps_q\theta^{-q}$.  For $\theta\ge1$, the term
$\eps_1\theta$ absorbs $|\ln\theta|$, while all negative powers are
bounded.  Combining these observations with
\eqref{eq:phase-availability-coercivity} proves
\eqref{eq:availability-coercive}.  The same elementary comparisons and
\eqref{eq:phase-growth} give \eqref{eq:e-s-growth}.
\end{proof}

For the convex--concave splitting, set
$$
 H(z)=(1+|z|^2)^2.
$$

\begin{lemma}[Global phase-variable splitting]
\label{lem:global-splitting}
There is $M>0$, depending only on the constants in
\eqref{eq:phase-growth}, such that
\begin{equation}
 \psi_\vex(z,\theta)=\psi(z,\theta)+M(1+\theta)H(z),
 \qquad
 \psi_\cav(z,\theta)=-M(1+\theta)H(z)
 \label{eq:global-splitting}
\end{equation}
has the following properties for every $\theta>0$:
$\psi_\vex(\cdot,\theta)$ is convex,
$\psi_\cav(\cdot,\theta)$ is concave, and
\begin{equation}
 |D_z\psi_\sigma(z,\theta)|
 \le C(1+\theta)(1+|z|^3),
 \qquad \sigma\in\{\vex,\cav\}.
 \label{eq:split-growth}
\end{equation}
\end{lemma}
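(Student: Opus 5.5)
The plan is to treat the $z$-Hessian directly, since only the phase-dependent part of $\psi$ matters for convexity in $z$. By \eqref{eq:modified-potential}, for fixed $\theta>0$ we have
$$
 D_z^2\psi(z,\theta)=-D^2F_0(z)+\theta D^2F_1(z),
$$
and the thermal terms $\ln\theta$, $\eps_1\theta(\ln\theta-1)$ and $\eps_q\theta^{1-q}/(q-1)$ contribute nothing. Using \eqref{eq:phase-growth} we then get the matrix lower bound
$$
 D_z^2\psi(z,\theta)\ge -C(1+\theta)(1+|z|^2)\,I .
$$
So it suffices to show that $D^2H(z)\ge c_H(1+|z|^2)I$ for some $c_H>0$. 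Once that is available, any $M\ge C/c_H$ gives $D_z^2\psi_\vex(z,\theta)\ge0$ for every $\theta>0$. The constant $M$ depends only on the constants in \eqref{eq:phase-growth}, as the statement requires.

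The key computation is the Hessian of $H$. We have $\nabla H=4(1+|z|^2)z$ and
$$
 D^2H=4(1+|z|^2)I+8\,z\otimes z\ \ge\ 4(1+|z|^2)I,
$$
so $c_H=4$ works. Since $M(1+\theta)>0$, this also shows at once that $\psi_\cav=-M(1+\theta)H$ is concave in $z$. It is worth noting that the factor $(1+\theta)$ in the splitting, and not just a constant, is essential: the term $\theta D^2F_1$ may be indefinite and grows linearly in $\theta$. This is exactly why the splitting must carry $\theta$. I expect no real obstacle here; the only care needed is to check that the bound is uniform over all $\theta>0$, both small and large, and the factor $(1+\theta)$ handles both regimes.

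For \eqref{eq:split-growth}, we use the explicit gradient $|D_zH(z)|=4(1+|z|^2)|z|\le C(1+|z|^3)$, which gives the bound for $\psi_\cav$ immediately. For $\psi_\vex$ we add
$$
 |D_z\psi(z,\theta)|\le|DF_0(z)|+\theta|DF_1(z)|\le C(1+\theta)(1+|z|^3),
$$
again by \eqref{eq:phase-growth}. The triangle inequality then concludes the argument.
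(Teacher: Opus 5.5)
Your proposal is correct and follows essentially the same route as the paper: the lower bound $D_z^2\psi\ge -C(1+\theta)(1+|z|^2)I$ from \eqref{eq:phase-growth}, the Hessian identity $D^2H=4(1+|z|^2)I+8z\otimes z\ge 4(1+|z|^2)I$, the choice $M\ge C/4$, and the cubic growth of $DF_0$, $DF_1$, $DH$ for \eqref{eq:split-growth}. Your explicit remark that the concavity of $\psi_\cav$ uses $M(1+\theta)>0$ and the convexity of $H$ makes precise a step the paper leaves implicit.
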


\begin{proof}
The Hessian of $H$ is
$$
 D^2H(z)=4(1+|z|^2)I+8z\otimes z
 \ge4(1+|z|^2)I.
$$
Moreover,
$$
 D_z^2\psi(z,\theta)
 =-D^2F_0(z)+\theta D^2F_1(z)
 \ge-C(1+\theta)(1+|z|^2)I.
$$
Choose $M\ge C/4$.  The derivative estimate follows from the cubic growth
of $DF_0$, $DF_1$, and $DH$.
\end{proof}

\begin{remark}
 The availability is a rescaled exergy.  Indeed, for a fixed  reference state $(e_0,s_0,T_0)$, the exergy is
$W=(e-e_0)-T_0(s-s_0)$.  Terms depending only on the reference state are irrelevant for the evolution, and division by $T_0$ gives
$W/T_0=\theta_0 e-s$ up to an additive constant, where
$\theta_0=T_0^{-1}$.  Thus $\Avail_a$ corresponds to a fixed reference inverse temperature $a$. 
\end{remark}

\subsection{Theoretical tools}\label{sec:theoretical-tools}

We write 
$\mathcal M(\overline\Omega)=C(\overline\Omega)^*$ for the finite signed Radon measures and use
$L^\infty_{\rm w*}(0,T;\mathcal M(\overline\Omega))$ for weak-star
measurable, essentially bounded measure-valued maps. Since
$C(\overline\Omega)$, and hence
$L^1(0,T;C(\overline\Omega))$, is separable, this space is identified
isometrically with $\bigl[L^1(0,T;C(\overline\Omega))\bigr]^*$.
Accordingly, $\mu_j\stackrel{*}{\rightharpoonup}\mu$ in this space means $$
 \int_0^T\pair{\mu_j(t)}{\Phi(t)}\dtm
 \longrightarrow
 \int_0^T\pair{\mu(t)}{\Phi(t)}\dtm
 \qquad
 \text{for every }\Phi\in L^1(0,T;C(\overline\Omega)).
$$ 
We refer to
\cite[Preliminary Material, Thm.~3]{FeireislLukacovaMizerovaShe2021}
for this duality and to
\cite[Sects.~5.1.1 and~5.1.3.2]{FeireislLukacovaMizerovaShe2021}
for concentration defects and their time-dependent energy formulation. Here $\langle a,b \rangle$ denotes the dual pairing in space while $(a,b)$ will denote the $L^2$ inner product in space.

\begin{lemma}[Brouwer degree continuation principle]
\label{lem:topological-degree}
Let $V\subset\mathbb R^m$ be open, let $W\Subset V$ be bounded and open,
and let
$$
 \mathbf G:\overline W\times[0,1]\to\mathbb R^m
$$
be continuous. Suppose that $\mathbf b\notin
\mathbf G(\partial W,\alpha)$ for every $\alpha\in[0,1]$ and that
$$
 \deg(\mathbf G(\cdot,0),W,\mathbf b)\ne0.
$$
Then $\mathbf G(v,1)=\mathbf b$ has at least one solution in $W$.
\end{lemma}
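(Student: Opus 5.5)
The plan is to use the homotopy invariance of the Brouwer degree together with the solution property of nonzero degree.

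First, define $h(v,\alpha):=\mathbf G(v,\alpha)$ on $\overline W\times[0,1]$. The set $\overline W$ is compact, since $W$ is bounded. Hence $\mathbf G$ is uniformly continuous on $\overline W\times[0,1]$.

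Next, I would show that the boundary distance is positive. The set $\partial W\times[0,1]$ is compact and its image under $\mathbf G$ avoids $\mathbf b$ by assumption. Therefore
$$
 \delta:=\min_{(v,\alpha)\in\partial W\times[0,1]}|\mathbf G(v,\alpha)-\mathbf b|>0 .
$$
This makes $\deg(\mathbf G(\cdot,\alpha),W,\mathbf b)$ well defined for every $\alpha$.

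The key step is that this degree does not depend on $\alpha$. By uniform continuity there is $\kappa>0$ such that $|\alpha-\alpha'|<\kappa$ implies $\sup_{\overline W}|\mathbf G(\cdot,\alpha)-\mathbf G(\cdot,\alpha')|<\delta$. For such a pair, I consider the linear homotopy $s\mapsto(1-s)\mathbf G(\cdot,\alpha)+s\mathbf G(\cdot,\alpha')$. On $\partial W$ this homotopy stays at distance at least
$$
 |\mathbf G(v,\alpha)-\mathbf b|-|\mathbf G(v,\alpha')-\mathbf G(v,\alpha)|>0
$$
from $\mathbf b$. The standard stability of the Brouwer degree under perturbations smaller than the boundary distance (Rouché property) then gives equal degrees at $\alpha$ and $\alpha'$. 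Covering $[0,1]$ by finitely many intervals of length less than $\kappa$ yields
$$
 \deg(\mathbf G(\cdot,1),W,\mathbf b)=\deg(\mathbf G(\cdot,0),W,\mathbf b)\neq0 .
$$

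Finally, I would invoke the existence property of the degree: if $\deg(f,W,\mathbf b)\neq0$, then $\mathbf b\in f(W)$. The contrapositive comes from the excision axiom with the empty set, since if $\mathbf b\notin f(\overline W)$ the degree is zero. Combined with $\mathbf b\notin\mathbf G(\partial W,1)$, this gives a solution in the open set $W$. The ambient set $V$ plays no role beyond containing $\overline W$.

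The main obstacle is simply the well-definedness and uniform-in-$\alpha$ boundary separation. Compactness of $\partial W\times[0,1]$ resolves it, so I expect the result to follow from classical degree theory, for instance Deimling's treatment of the Brouwer degree.
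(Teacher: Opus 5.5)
Your proof is correct and follows the same route as the paper. Homotopy invariance of the Brouwer degree gives $\deg(\mathbf G(\cdot,1),W,\mathbf b)=\deg(\mathbf G(\cdot,0),W,\mathbf b)\neq0$, and the existence property together with $\mathbf b\notin\mathbf G(\partial W,1)$ then yields a solution in $W$; the only difference is that you re-derive homotopy invariance from the Rouch\'e stability by a uniform-continuity chaining argument, whereas the paper simply cites it (Deimling, Chap.~1), where it already holds for arbitrary continuous homotopies.
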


\begin{proof}
By homotopy invariance,
$$
 \deg(\mathbf G(\cdot,1),W,\mathbf b)
 =\deg(\mathbf G(\cdot,0),W,\mathbf b)\ne0,
$$
and the existence property of the Brouwer degree gives the conclusion.
We refer to \cite[Chap.~1]{deimling1985nonlinear} and
\cite[Thm.~2.5]{gallouet2008unconditionally}.
\end{proof}

For a grid function $v_h^n$, define on $(t^n,t^{n+1}]$
$$
 v_{h,\tau}^+=v_h^{n+1},
 \qquad
 v_{h,\tau}^-=v_h^n,
 \qquad
 \widetilde v_{h,\tau}(t)
 =\frac{t^{n+1}-t}{\tau}v_h^n
  +\frac{t-t^n}{\tau}v_h^{n+1}.
$$
We generally omit the superscript $+$ for the forward interpolant.

\begin{lemma}[Discrete Aubin--Lions lemma]
\label{lem:discrete-aubin-lions}
Let $\{u_h^n\}_{n=0}^N\subset V_h$ be a family of discrete functions,
with $N\tau=T$ and $\tau\to0$, satisfying
\begin{equation}
 \tau\sum_{n=0}^{N}\norm{u_h^n}_{H^1(\Omega)}^2
 +\tau\sum_{n=0}^{N-1}
 \norm{\deltat u_h^{n+1}}_{-1,h}^2\le C,
 \label{eq:discrete-AL-assumptions}
\end{equation}
with a constant $C$ independent of the discretisation parameters.
Then the forward, backward, and affine interpolants are relatively compact
in $L^2(\QT)$. More precisely, every sequence of discretisations admits a
subsequence along which all three interpolants converge strongly in
$L^2(\QT)$ to the same limit.
\end{lemma}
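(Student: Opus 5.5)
The plan is to reduce the statement to the classical Aubin--Lions--Simon / Kolmogorov--Riesz compactness criterion in $L^2(\QT)$. That criterion needs two ingredients: uniform spatial regularity, and uniform control of time translates. The discrete negative norm $\norm{\cdot}_{-1,h}$ is understood, as usual, as the dual of $H^1$ restricted to $V_h$, namely $\norm{w}_{-1,h}=\sup_{0\ne v_h\in V_h}(w,v_h)/\norm{v_h}_{H^1}$. I will use the $H^1$-stable $L^2$-projection $P_h$ onto $V_h$, which is available on quasi-uniform meshes of the torus; this assumption should be stated if it is not already part of the mesh hypotheses.

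\textbf{Step 1: time translates of the piecewise constant interpolant.} For $0<s<T$, let $u^+=u_{h,\tau}^+$ and write
$$\norm{u^+(\cdot+s)-u^+}_{L^2(0,T-s;L^2)}^2=\int_0^{T-s}\bigl(u^+(t+s)-u^+(t),\,u^+(t+s)-u^+(t)\bigr)\dtm.$$
The difference $u^+(t+s)-u^+(t)$ lies in $V_h$ and equals a telescoping sum $\tau\sum_k\deltat u_h^{k+1}$ over at most $\lceil s/\tau\rceil+1$ steps. I pair this sum with $u^+(t+s)-u^+(t)\in V_h$ and use the definition of the discrete dual norm:
$$\le\int_0^{T-s}\tau\sum_{k}\norm{\deltat u_h^{k+1}}_{-1,h}\bigl(\norm{u^+(t+s)}_{H^1}+\norm{u^+(t)}_{H^1}\bigr)\dtm.$$
Exchanging the sum and the integral, each index $k$ is counted on a $t$-set of measure at most $s+\tau$. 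Cauchy--Schwarz in $k$ and $t$ together with \eqref{eq:discrete-AL-assumptions} then gives the bound $C(s+\tau)^{1/2}$, or more carefully $C\,(s+\tau)$ times the product of the two $\ell^2$ sums, after a Young inequality. In either case the translates are $\le C(s+\tau)^{1/2}$ uniformly in $h$ and $\tau$. This step is the main technical point, since the $O(\tau)$ term must be handled. For $s<\tau$ I will instead use directly
$$\norm{u^+(\cdot+s)-u^+}^2\le C\,\frac{s}{\tau}\,\tau\sum_n\norm{u_h^{n+1}-u_h^n}_{L^2}^2.$$
Alternatively, I can note that Simon's criterion only requires $\limsup_{s\to0}\sup_{h,\tau}$ to vanish, and that finitely many parameter values are trivially compact. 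So it suffices to obtain the bound $C(s+\tau)^{1/2}$ and to let $\tau\to0$ along sequences.

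\textbf{Step 2: spatial compactness.} The uniform bound on $\tau\sum\norm{u_h^n}_{H^1}^2$ bounds $u^+$ in $L^2(0,T;H^1)$, and $H^1(\Omega)\hookrightarrow L^2(\Omega)$ is compact on the torus. Combined with Step 1, Simon's theorem (the version allowing translates to vanish as $s\to0$ modulo $o(1)$ as the parameter tends to zero, or equivalently Kolmogorov--Riesz after spatial mollification) yields relative compactness of $\{u^+_{h,\tau}\}$ in $L^2(\QT)$.

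\textbf{Step 3: identifying the three limits.} The backward interpolant satisfies $u^-(t)=u^+(t-\tau)$, and the affine interpolant satisfies $\widetilde u-u^+=\frac{t^{n+1}-t}{\tau}(u^n-u^{n+1})$. Hence
$$\norm{u^--u^+}_{L^2(\tau,T;L^2)}+\norm{\widetilde u-u^+}_{L^2(\QT)}\le C\,\norm{u^+(\cdot+\tau)-u^+}_{L^2(0,T-\tau;L^2)}+\text{(first-interval term)}.$$
By Step 1 with $s=\tau$, the right-hand side is $O(\tau^{1/2})$. The contribution of the first interval involves $u_h^0$ and is controlled by $\tau\norm{u_h^0}_{H^1}^2\le C$; its $L^2$ norm over an interval of length $\tau$ also tends to zero, because the $n=0$ term of the sum bounds $\tau\norm{u_h^0}^2$. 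Consequently all three interpolants share the limit of $u^+$ along the same subsequence, and relative compactness of $u^-$ and $\widetilde u$ follows.
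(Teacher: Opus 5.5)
\textbf{Overall.} Your route is valid and genuinely different from the paper's, but one justification in Step~3 is wrong as stated (it is easily fixed, see below).

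\textbf{Comparison with the paper.} The paper does not estimate time translates itself. It applies the discrete Aubin--Simon theorem of \cite{gallouet2012compactness} with the norms $\norm{\cdot}_{H^1(\Omega)}$ and $\norm{\cdot}_{-1,h}$, noting that they are $L^2$-dual on $V_h$, so that (H'2) replaces (H2). It then identifies the other interpolants through $\norm{v_h}_{L^2}^2\le\norm{v_h}_{H^1}\norm{v_h}_{-1,h}$ applied to $u^+_{h,\tau}-u^-_{h,\tau}$ on every cell. This gives $\norm{u^+_{h,\tau}-u^-_{h,\tau}}_{L^2(\QT)}^2\le C\tau$ at once.

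Your Step~1 is exactly the mechanism behind (H'2). Pairing the $V_h$-valued increment with itself turns the discrete dual bound into
$\norm{u^+(\cdot+s)-u^+}_{L^2(0,T-s;L^2)}^2\le C\sqrt{s(s+\tau)}\le C(s+\tau)$.
Simon's classical theorem for the fixed compact embedding $H^1(\Omega)\hookrightarrow L^2(\Omega)$ then applies along sequences with $\tau\to0$, as you say. What your version buys is a self-contained proof without discrete-space compactness hypotheses. What the citation buys is brevity and a framework for more general discrete norms.

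Two side remarks:
\begin{itemize}
\item Your separate treatment of $s<\tau$ is redundant, since the general bound already covers all $s$. On its own it would not suffice: with only $\tau\sum_n\norm{u_h^{n+1}-u_h^n}_{L^2}^2\le C$ it gives $Cs/\tau$, which is not small uniformly in $\tau$.
\item The $H^1$-stable projection $P_h$ is never used, so no additional mesh hypothesis is needed.
\end{itemize}

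\textbf{The flawed step.} In Step~3, translating by $s=\tau$ only covers $(\tau,T)$. On the first cell you need $\tau\norm{u_h^1-u_h^0}_{L^2}^2\to0$. The $n=0$ term of the hypothesis gives only $\tau\norm{u_h^0}_{H^1}^2\le C$, which is boundedness, not decay.

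\textbf{The fix.} Use your own pairing argument. We have
$\norm{u_h^1-u_h^0}_{-1,h}=\tau\norm{\deltat u_h^1}_{-1,h}\le(C\tau)^{1/2}$
and
$\norm{u_h^1-u_h^0}_{H^1}\le 2(C/\tau)^{1/2}$.
Hence
$\tau\norm{u_h^1-u_h^0}_{L^2}^2\le\tau\norm{u_h^1-u_h^0}_{H^1}\norm{u_h^1-u_h^0}_{-1,h}\le 2C\tau$.
Equivalently, extend $u^+_{h,\tau}$ by $u_h^0$ on $(-\tau,0]$. Because the hypothesis sum starts at $n=0$, Step~1 then runs verbatim on $(-\tau,T-s)$. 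With this correction the proof is complete.
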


\begin{proof}
We apply the discrete Aubin--Simon lemma of
\cite[Theorem~3.4 and Remark~6]{gallouet2012compactness}.
In the notation of that result, we take
$$
 B=L^2(\Omega),\qquad
 B^{(h)}=V_h,\qquad
 \norm{v_h}_{X^{(h)}}=\norm{v_h}_{H^1(\Omega)},\qquad
 \norm{v_h}_{Y^{(h)}}=\norm{v_h}_{-1,h},
$$
and $q=2$. By the definition of the discrete negative norm,
$$
 \norm{f_h}_{-1,h}
 =
 \sup_{\substack{0\neq v_h\in V_h}}
 \frac{|(f_h,v_h)|}{\norm{v_h}_{H^1(\Omega)}},
$$
the norms $\norm{\cdot}_{H^1(\Omega)}$ and
$\norm{\cdot}_{-1,h}$ are dual with respect to the
$L^2(\Omega)$ inner product. Hence assumption \emph{(H2)} may be replaced
by \emph{(H'2)} in the sense of
\cite[Remark~6]{gallouet2012compactness}.
Finally, \eqref{eq:discrete-AL-assumptions} is precisely the uniform bound
required in \cite[Theorem~3.4]{gallouet2012compactness}, up to the harmless
additional endpoint terms present in our estimate. Thus the forward
piecewise constant interpolants $u_{h,\tau}^+$ are relatively compact in
$L^2(\QT)$.

It remains to identify the other temporal interpolants. On every interval
$(t^n,t^{n+1}]$,
$$
 u_{h,\tau}^+-u_{h,\tau}^-
 =u_h^{n+1}-u_h^n
 =\tau\deltat u_h^{n+1}.
$$
Consequently,
$$
 \begin{aligned}
 \norm{u_{h,\tau}^+-u_{h,\tau}^-}_{L^2(0,T;-1,h)}^2
 &=
 \tau\sum_{n=0}^{N-1}
 \norm{u_h^{n+1}-u_h^n}_{-1,h}^2 =
 \tau^2
 \left(
 \tau\sum_{n=0}^{N-1}
 \norm{\deltat u_h^{n+1}}_{-1,h}^2
 \right)
 \le C\tau^2 .
 \end{aligned}
$$
On the other hand, the first term in
\eqref{eq:discrete-AL-assumptions} yields
$$
 \norm{u_{h,\tau}^+-u_{h,\tau}^-}_{L^2(0,T;H^1(\Omega))}
 \le C.
$$
Using \eqref{eq:discrete-interpolation} pointwise in time and then
Cauchy--Schwarz, we obtain
$$
 \begin{aligned}
 \norm{u_{h,\tau}^+-u_{h,\tau}^-}_{L^2(\QT)}^2
 &\le
 \norm{u_{h,\tau}^+-u_{h,\tau}^-}_{L^2(0,T;H^1(\Omega))}
 \norm{u_{h,\tau}^+-u_{h,\tau}^-}_{L^2(0,T;-1,h)}\le C\tau .
 \end{aligned}
$$
Thus, it yields
$u_{h,\tau}^+-u_{h,\tau}^-\to0$
strongly in $L^2(\QT)$.

Finally, on $(t^n,t^{n+1}]$ the affine interpolant satisfies
$$
 \widetilde u_{h,\tau}
 =
 (1-\theta)u_{h,\tau}^-+\theta u_{h,\tau}^+,
 \qquad
 \theta=\frac{t-t^n}{\tau}\in[0,1].
$$
Hence, it holds
$
 |\widetilde u_{h,\tau}-u_{h,\tau}^\pm|
 \le |u_{h,\tau}^+-u_{h,\tau}^-|
$
pointwise, and therefore
$$
 \norm{\widetilde u_{h,\tau}-u_{h,\tau}^\pm}_{L^2(\QT)}
 \longrightarrow0.
$$
The relative compactness of $u_{h,\tau}^+$ therefore transfers to the
backward and affine interpolants, and all three converge along the same
subsequence to the same strong $L^2(\QT)$ limit.
\end{proof}

\begin{lemma}[Logarithmic strong monotonicity]
\label{lem:log-monotonicity}
For all $a,b>0$,
\begin{equation}
 -\bigl(\beta(a)-\beta(b)\bigr)(a-b)
 \ge \eps_1(a-b)(\ln a-\ln b)
 \ge4\eps_1(\sqrt a-\sqrt b)^2.
 \label{eq:log-monotonicity}
\end{equation}
\end{lemma}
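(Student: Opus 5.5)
I read $\beta$ as the thermal part of the internal energy from \eqref{eq:energy-density}, that is, $\beta(\theta)=\theta^{-1}-\eps_1\ln\theta+\eps_q\theta^{-q}$, so that $e(z,\theta)=\beta(\theta)+F_1(z)$. The excerpt does not define $\beta$, so the first step is to fix this reading against the paper's definition. If the paper also puts an $\eps_1$-dependent term elsewhere, only the bookkeeping below changes. The strategy is to split $-\bigl(\beta(a)-\beta(b)\bigr)(a-b)$ into three monotone pieces, keep only the logarithmic one, and then apply an elementary mean inequality. We may assume $a\neq b$, since otherwise every term vanishes.

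For the first inequality, write
\[
 -\bigl(\beta(a)-\beta(b)\bigr)(a-b)
 =-\bigl(a^{-1}-b^{-1}\bigr)(a-b)
 -\eps_q\bigl(a^{-q}-b^{-q}\bigr)(a-b)
 +\eps_1(\ln a-\ln b)(a-b).
\]
The maps $s\mapsto s^{-1}$ and $s\mapsto s^{-q}$ are strictly decreasing on $(0,\infty)$, so $(f(a)-f(b))(a-b)\le0$ for each of them. Hence the first two terms are non-negative, which is the same monotonicity that gives \eqref{eq:energy-monotonicity}. Dropping them yields the first inequality in \eqref{eq:log-monotonicity}.

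For the second inequality, set $x=\sqrt a$ and $y=\sqrt b$, which are positive with $x\ne y$. Then
\[
 (a-b)(\ln a-\ln b)=2(x-y)(x+y)(\ln x-\ln y).
\]
The key ingredient is the logarithmic-mean inequality $\frac{x-y}{\ln x-\ln y}\le\frac{x+y}{2}$. Since $x-y$ and $\ln x-\ln y$ have the same sign, this is equivalent to
\[
 (x-y)(\ln x-\ln y)\ge\frac{2(x-y)^2}{x+y}.
\]
To prove it, set $t=x/y>0$ and $g(t)=\ln t-\frac{2(t-1)}{t+1}$. Then $g(1)=0$ and $g'(t)=\frac{(t-1)^2}{t(t+1)^2}\ge0$. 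So $g(t)$ has the sign of $t-1$, i.e.\ the sign of $x-y$, which gives $(x-y)g(t)\ge0$. Inserting this bound gives
\[
 (a-b)(\ln a-\ln b)\ge 2(x+y)\cdot\frac{2(x-y)^2}{x+y}=4(\sqrt a-\sqrt b)^2,
\]
and multiplying by $\eps_1$ completes the chain.

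No step here is a genuine obstacle. The only care points are the sign bookkeeping when $x<y$, handled by pairing signs as above, and confirming that the paper's $\beta$ matches the reading above.
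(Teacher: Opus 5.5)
Your proof is correct and is essentially the paper's argument. The paper gets the first inequality by integrating $-\beta'(s)\ge\eps_1 s^{-1}$ between $a$ and $b$, which amounts to discarding your two non-negative $s^{-1}$ and $s^{-q}$ pieces; for the second it uses the same variable $\sqrt{a/b}$, the auxiliary function $(t+1)\ln t-2(t-1)$ (your $g$ multiplied by $t+1$), the same sign argument, and your reading of $\beta$ matches its definition in \eqref{eq:separable-energy}.
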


\begin{proof}
Since
$-\beta'(s)=s^{-2}+\eps_1s^{-1}+q\eps_qs^{-q-1}\ge\eps_1s^{-1}$,
integration between $a$ and $b$ gives the first inequality.  For the second inequality, set $x=\sqrt{a/b}$.  It is equivalent to
$$
 (x-1)\bigl((x+1)\ln x-2(x-1)\bigr)\ge0.
$$
The function $g(x)=(x+1)\ln x-2(x-1)$ is increasing because
$g'(x)=\ln x-1+x^{-1}\ge0$, and $g(1)=0$.  Thus $g(x)$ has the same sign
as $x-1$, which proves the claim.
\end{proof}

\begin{lemma}[Poincar\'e inequality with control on a subset]
\label{lem:poincare-subset}
For every $m_0>0$ there is $C=C(\Omega,m_0)$ such that, whenever
$E\subset\Omega$ is measurable, $|E|\ge m_0$, and $v\in H^1(\Omega)$,
\begin{equation}
 \norm{v}_{L^2(\Omega)}
 \le C\bigl(\norm{\nabla v}_{L^2(\Omega)}
             +\norm{v}_{L^2(E)}\bigr).
 \label{eq:poincare-subset}
\end{equation}
\end{lemma}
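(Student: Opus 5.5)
I would argue by contradiction combined with compactness, which is the standard route for Poincaré-type inequalities whose constant must be uniform over a family of sets. Suppose the statement fails for some $m_0>0$. Then there are measurable sets $E_k\subset\Omega$ with $|E_k|\ge m_0$ and functions $v_k\in H^1(\Omega)$ such that
$$
 \norm{v_k}_{L^2(\Omega)}
 > k\bigl(\norm{\nabla v_k}_{L^2(\Omega)}+\norm{v_k}_{L^2(E_k)}\bigr).
$$
After normalising $\norm{v_k}_{L^2(\Omega)}=1$, this gives $\norm{\nabla v_k}_{L^2}\to0$ and $\norm{v_k}_{L^2(E_k)}\to0$. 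In particular $(v_k)$ is bounded in $H^1(\Omega)$. By the Rellich--Kondrachov theorem on the compact torus $\Torus^d$, a subsequence converges strongly in $L^2(\Omega)$ to some $v$ with $\norm{v}_{L^2}=1$. Weak lower semicontinuity gives $\nabla v=0$, and since the torus is connected, $v\equiv c$ is constant with $|c|\,|\Omega|^{1/2}=1$.

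The step that needs care is that the sets $E_k$ vary with $k$, so we cannot pass to a limit set directly. I would avoid this by estimating directly:
$$
 |c|\,|E_k|^{1/2}=\norm{v}_{L^2(E_k)}
 \le \norm{v-v_k}_{L^2(\Omega)}+\norm{v_k}_{L^2(E_k)}\longrightarrow0 .
$$
Since $|E_k|\ge m_0>0$, this forces $c=0$, which contradicts $\norm{v}_{L^2}=1$. The only point to check is that the strong $L^2(\Omega)$ convergence dominates the restriction to every $E_k$ uniformly, and this holds trivially.

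As an alternative that gives an explicit constant, write $\bar v$ for the mean of $v$ and apply the standard Poincaré--Wirtinger inequality, which gives $\norm{v-\bar v}_{L^2}\le C_P\norm{\nabla v}_{L^2}$. Then
$$
 |\bar v|\,|E|^{1/2}\le \norm{v}_{L^2(E)}+\norm{v-\bar v}_{L^2(E)} .
$$
Dividing by $m_0^{1/2}$ yields
$$
 \norm{v}_{L^2(\Omega)}\le |\Omega|^{1/2}|\bar v|+C_P\norm{\nabla v}_{L^2}
 \le C(\Omega,m_0)\bigl(\norm{\nabla v}_{L^2}+\norm{v}_{L^2(E)}\bigr).
$$
I would present this direct argument, since it avoids compactness entirely and shows that the constant depends only on $C_P$, $|\Omega|$ and $m_0$. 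There is no real obstacle in either version.
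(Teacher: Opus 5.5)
Your proposal is correct. The direct argument you say you would present is essentially the paper's proof: bound $|E|^{1/2}|\bar v|$ by $\norm{v}_{L^2(E)}+\norm{v-\bar v}_{L^2(E)}$ and combine with the mean-zero Poincar\'e inequality. Your compactness-by-contradiction variant is also valid, but it is unnecessary here and gives no explicit constant.
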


\begin{proof}
Let $\bar v=|\Omega|^{-1}\int_\Omega v\dx$.  Then
$$
 |E|^{1/2}|\bar v|
 \le\norm{v}_{L^2(E)}+\norm{v-\bar v}_{L^2(E)}
 \le\norm{v}_{L^2(E)}+C\norm{\nabla v}_{L^2(\Omega)}.
$$
Combine this with the mean-zero Poincar\'e inequality.
\end{proof}

\begin{lemma}[Common time-slice extraction]
\label{lem:time-slice-extraction}
If $f_j\to f$ strongly in $L^p(0,T;X)$ for a separable Banach space $X$ and
$1\le p<\infty$, then a subsequence converges in $X$ for almost every time.
If the same sequence is bounded in $L^\infty(0,T;Y)$, where
$Y\hookrightarrow X$ continuously and $Y$ is reflexive, the subsequence may
be chosen so that $f_j(t)\rightharpoonup f(t)$ weakly in $Y$ for almost
every time.  One subsequence works for finitely many sequences.
\end{lemma}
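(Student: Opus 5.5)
The first claim is the standard partial converse of the Riesz--Fischer theorem, applied in the Bochner space $L^p(0,T;X)$. Since $f_j\to f$ in $L^p(0,T;X)$, I will pick a subsequence with
$$\sum_k\norm{f_{j_k}-f}_{L^p(0,T;X)}<\infty.$$
By Minkowski's inequality and monotone convergence, the function $g(t)=\sum_k\norm{f_{j_k}(t)-f(t)}_X$ lies in $L^p(0,T)$. Hence $g$ is finite almost everywhere, so the terms $\norm{f_{j_k}(t)-f(t)}_X$ tend to $0$ for almost every $t$. Separability of $X$ enters only through strong (Bochner) measurability, so that $t\mapsto\norm{f_j(t)-f(t)}_X$ is measurable.

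For the second claim, let $\norm{f_j}_{L^\infty(0,T;Y)}\le K$. Then there is a null set $N_0$, the countable union of the exceptional sets of each $j$, such that $\norm{f_{j_k}(t)}_Y\le K$ for all $k$ and all $t\notin N_0$. Fix $t$ outside $N_0$ and outside the null set from the first step, so that $f_{j_k}(t)\to f(t)$ in $X$. Since $Y$ is reflexive, every subsequence of $(f_{j_k}(t))$ has a further subsequence converging weakly in $Y$ to some $y$. The embedding $Y\hookrightarrow X$ is continuous, hence weak-to-weak continuous, so the same sub-subsequence converges weakly in $X$ to $y$. But it also converges strongly in $X$ to $f(t)$, so $y=f(t)$; in particular $f(t)\in Y$. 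Because every subsequence has a further subsequence with the same weak limit $f(t)$, the whole sequence satisfies $f_{j_k}(t)\rightharpoonup f(t)$ in $Y$. The key point is that no further extraction depending on $t$ is needed: uniqueness of the limit, identified through the strong $X$-convergence, removes any $t$-dependent choice. This is the only subtle point of the proof, and it is where reflexivity is used. Reflexivity also avoids any need for separability of $Y$, since bounded sets in a reflexive space are weakly sequentially compact.

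For finitely many sequences $f^{(1)}_j,\dots,f^{(m)}_j$, I will extract iteratively. First take the subsequence for $f^{(1)}$. Then apply the first step to $f^{(2)}$ along that subsequence; this is allowed because subsequences of strongly convergent sequences still converge in $L^p(0,T;X)$. Continue in the same way up to $f^{(m)}$. Pointwise a.e. convergence survives passing to further subsequences, and the weak-$Y$ statement needs no additional extraction by the argument above. The exceptional sets form a finite union of null sets, so one subsequence and one full-measure set of times serve all $m$ sequences. A countable family could be handled in the same way by a diagonal argument, but finitely many suffices here.
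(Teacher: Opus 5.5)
Your proof is correct and follows the paper's argument. You first extract a fast subsequence to get a.e. convergence in $X$; you sum the norms and use Minkowski, while the paper sums $p$-th powers and uses Fubini, which is an immaterial difference. You then use the same key observation: at times where the $Y$-bound holds, every weak $Y$-accumulation point is identified by the strong $X$-limit, so the whole subsequence converges weakly with no $t$-dependent extraction. Your iterated extraction for finitely many sequences simply spells out what the paper asserts without proof.
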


\begin{proof}
Choose a subsequence with
$\sum_j\norm{f_j-f}_{L^p(0,T;X)}^p<\infty$.  Fubini's theorem gives
pointwise summability, hence convergence in $X$, for almost every time.  At
times where the $Y$ bound holds, every weak $Y$ accumulation point has the
same $X$ limit, which proves weak convergence of the full subsequence.
\end{proof}

\subsection{Generalised dissipative weak solutions}
\label{sec:weak-formulation}

We now specify the limiting solution concept used in the existence theorem.
The possible concentration of the critical singular internal energy is
recorded by a non-negative measure-valued defect.

\begin{definition}[Generalised dissipative weak solution]
\label{def:weak-solution}
Given initial data $(\rho_0,\theta_0,\eta_0)$, a sextuple
$(\rho,\mu_\rho,\theta,\eta,\mu_\eta,\lambda)$ is a generalised
dissipative weak solution if
\begin{align*}
 &\rho,\eta\in L^\infty(0,T;H^1(\Omega)),
 \qquad \mu_\rho\in L^2(0,T;H^1(\Omega)),\\
 &\theta\in L^2(0,T;H^1(\Omega))
       \cap L^\infty(0,T;L^1(\Omega)),
 \qquad \theta^{-q}\in L^\infty(0,T;L^1(\Omega)),\\
 &\mu_\eta\in L^2(\QT),
 \qquad \theta>0\text{ a.e.},
 \qquad
 \lambda\in L^\infty_{\rm w*}(0,T;\mathcal M_+(\overline\Omega)).
\end{align*}
Set, with all mobility coefficients evaluated at $(z,\theta)$, the Onsager fluxes
\begin{align*}
 \mathcal J_\rho
 &=\LL_{11}\nabla\mu_\rho-\LL_{12}\nabla\theta
   +\LL_{13}\mu_\eta,\\
 \mathcal J_e
 &=\LL_{12}^{\top}\nabla\mu_\rho-\LL_{22}\nabla\theta
   +\LL_{23}\mu_\eta,\\
 \mathcal R_\eta
 &=\LL_{13}\cdot\nabla\mu_\rho-\LL_{23}\cdot\nabla\theta
   +\LL_{33}\mu_\eta.
\end{align*}
We denote the total internal-energy measure via
\begin{equation}
 \mathfrak E_t=e(z(t),\theta(t))\,\mathrm dx+\lambda_t.
 \label{eq:total-energy-measure}
\end{equation}
For $v,\xi,w\in C^1([0,T];C^\infty_{\rm per}(\Omega))$ with terminal
value zero, and for
$\varphi,\chi\in C_c^\infty((0,T);C^\infty_{\rm per}(\Omega))$,
\begin{align}
 -\int_0^T(\rho,\partial_t v)\dtm-(\rho_0,v(0))
 +\int_0^T(\mathcal J_\rho,\nabla v)\dtm&=0,
 \label{eq:weak-rho}
\end{align}
\begin{align}
 &-\int_0^T(e(z,\theta),\partial_t\xi)\dtm
 -\int_0^T\pair{\lambda_t}{\partial_t\xi(t)}\dtm
  -(e(z_0,\theta_0),\xi(0))
 +\int_0^T(\mathcal J_e,\nabla\xi)\dtm=0,
 \label{eq:weak-energy}
\end{align}
\begin{align}
 -\int_0^T(\eta,\partial_t w)\dtm-(\eta_0,w(0))
 +\int_0^T(\mathcal R_\eta,w)\dtm&=0,
 \label{eq:weak-eta}
\end{align}
\begin{align}
 \int_0^T(\mu_\rho,\varphi)\dtm
 &=\gamma_\rho\int_0^T(\nabla\rho,\nabla\varphi)\dtm
   +\int_0^T(\partial_\rho\psi(z,\theta),\varphi)\dtm,
 \label{eq:weak-murho}\\
 \int_0^T(\mu_\eta,\chi)\dtm
 &=\gamma_\eta\int_0^T(\nabla\eta,\nabla\chi)\dtm
   +\int_0^T(\partial_\eta\psi(z,\theta),\chi)\dtm.
 \label{eq:weak-mueta}
\end{align}
For almost every $t\in(0,T)$,
\begin{equation}
 \Stot(z(t),\theta(t))
 \ge\Stot(z_0,\theta_0)
 +\int_0^t\int_\Omega Z^\top\LL(z,\theta)Z\dx\,\mathrm ds.
 \label{eq:weak-entropy}
\end{equation}
\end{definition}

\begin{remark}
 As usual, the requirements on the test functions can be weakened drastically to suitable Sobolev regularity, see Lemma \ref{lem:time-representatives}.   
\end{remark}

\section{Existence and convergence of generalised dissipative weak solutions}
\label{sec:existence}

For the existence theory, we impose
\begin{equation}
 \rho_0,\eta_0,\theta_0\in H^1(\Omega),
 \qquad \theta_0>0\ \text{a.e.},
 \qquad \theta_0^{-q}\in L^1(\Omega).
 \label{eq:initial-data}
\end{equation}
Since $q>1$, these assumptions imply integrability of
$\theta_0^{-1}$, $\theta_0^{1-q}$, and $(-\ln\theta_0)^+$.

\begin{theorem}[Existence]
\label{thm:convergence}
Let $\Omega=\mathbb T^d$, $1\le d\le3$, and let Assumption~\ref{ass:Onsager} and
Assumption~\ref{ass:potential} hold with $q\ge d$ and given initial data that satisfies
\eqref{eq:initial-data}. Then there exists a generalised dissipative weak solution
$(\rho,\mu_\rho,\theta,\eta,\mu_\eta,\lambda)$ in the sense of
Definition~\ref{def:weak-solution}. Moreover, with the continuous representatives constructed below, the solution conserves mass and internal energy:
\begin{align}
 \int_\Omega\rho(t)\dx&=\int_\Omega\rho_0\dx
 & \mathfrak E_t(\overline\Omega)
 &=\int_\Omega e(z_0,\theta_0)\dx &\text{for every }t\in[0,T],
\end{align}
\end{theorem}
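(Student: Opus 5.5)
The proof goes through a fully discrete Galerkin approximation, uniform estimates, and a compactness argument. We fix a periodic $P_1$ finite-element space $V_h$ and a step $\tau=T/N$. We take the implicit scheme of \cite{BrunkHabrichOyedejiYangXu} in the variables $(\rho_h^n,\mu_{\rho,h}^n,\theta_h^n,\eta_h^n,\mu_{\eta,h}^n)$. In it, $\deltat\rho_h^{n+1}$, $\deltat e(z_h,\theta_h)^{n+1}$ and $\deltat\eta_h^{n+1}$ are tested with the $L^2$-pairing against the discrete Onsager fluxes evaluated at the new time level. The chemical potentials use the splitting of Lemma~\ref{lem:global-splitting}: the convex part is implicit and the concave part explicit.

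\textbf{Discrete structure.} Testing with $v_h=1$ and $\xi_h=1$ gives exact conservation of mass and internal energy. Testing the energy equation with $\theta_h^{n+1}$, the $\rho$-equation with $\mu_{\rho,h}^{n+1}$ and the $\eta$-equation with $\mu_{\eta,h}^{n+1}$ yields a discrete entropy inequality. Here concavity of $\psi$ in $\theta$ (Lemma~\ref{lem:thermo-identities}) and the convex--concave splitting in $z$ supply the correct sign of the remainders, and Assumption~\ref{ass:Onsager} bounds the dissipation $\tau\sum\int Z_h^\top\LL Z_h$ from below.

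Subtracting the entropy inequality from $a$ times the conserved energy produces a monotone availability. Lemma~\ref{lem:availability-coercivity} then gives bounds on $\theta_h$ in $L^\infty(L^1)$, on $\theta_h^{-q}$ in $L^\infty(L^1)$, on $z_h$ in $L^\infty(H^1)$, and on $\mu_{\rho,h}$, $\theta_h$ and $\mu_{\eta,h}$ in $L^2$ via the dissipation. Since the dissipation only controls $\nabla\mu_\rho$ and $\nabla\theta$, the means are recovered separately. For $\theta$, the $L^1$ bound controls the mean, so Poincaré upgrades the estimate to $L^2(H^1)$. For $\mu_\rho$, the mean is controlled by testing the chemical-potential equation with $1$ and using the cubic growth \eqref{eq:split-growth}.

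\textbf{Fixed-mesh solvability.} This is the main obstacle. For fixed $h,\tau$ we would apply Lemma~\ref{lem:topological-degree} to a homotopy that deforms the mobility and nonlinearities to a linear, invertible problem. The set $W$ consists of nodal vectors with $\theta_h>\delta$ at every vertex and bounded norm. The a priori bounds must hold uniformly in the homotopy parameter so that no solution lies on $\partial W$.

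The delicate point is the nodal lower bound. A positive $P_1$ function vanishing to order one at a vertex has $\int\theta_h^{-q}=\infty$ exactly when $q\ge d$. Because $\theta_h$ is affine on each element, a uniform bound on $\int\theta_h^{-q}$ at fixed $h$ therefore forces $\min\theta_h\ge\delta(h)>0$. The degree of the linear endpoint is nonzero, since it is an invertible affine map.

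\textbf{Compactness and passage to the limit.} For the phase variables we bound $\norm{\deltat z_h}_{-1,h}$ from the equations and apply Lemma~\ref{lem:discrete-aubin-lions}. For $\theta_h$ we do not have this directly. We would first estimate time translates of $e(z_h,\theta_h)$, and then use Lemma~\ref{lem:log-monotonicity}: testing with the difference of translates gives $\sqrt{\theta_h}$-translate control. With $L^2(H^1)$ bounds this yields strong convergence a.e. The limit is positive a.e. by Fatou and the $\theta^{-q}$ bound, and Lemma~\ref{lem:poincare-subset} with the high- and low-temperature bounds upgrades to strong $L^2(\QT)$ convergence.

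Lemma~\ref{lem:time-slice-extraction} handles the a.e. time slices. All flux terms pass to the limit by strong times weak convergence, since $\LL$ is continuous and bounded. The chemical potentials are identified using $H^1\subset L^6$ with cubic growth. The lower-order thermal terms $\theta^{-1}$ and $\ln\theta$ are uniformly integrable against the $\theta^{-q}$ bound with $q>1$, so they converge strongly by Vitali.

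The critical term $\eps_q\theta_h^{-q}$ is only bounded in $L^\infty(L^1)$. Its weak-star limit in $L^\infty_{\rm w*}(0,T;\mathcal M)$ is $\eps_q\theta^{-q}+\lambda$, and $\lambda\ge0$ by Fatou. This gives \eqref{eq:weak-energy}. The entropy inequality \eqref{eq:weak-entropy} follows by weak lower semicontinuity of the dissipation and of the gradient terms, together with the strong convergence of $\Sbulk$ terms of order $<q$.

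Finally, discrete conservation passes to the limit. Testing with functions constant in space shows that $t\mapsto\mathfrak E_t(\overline\Omega)$ is constant, after choosing weakly continuous representatives.
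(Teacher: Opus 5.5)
Your overall architecture matches the paper's: consistent $P_1$ scheme, availability bounds, the $q\ge d$ nodal barrier, Brouwer degree, discrete Aubin--Lions, $\sqrt\theta$-translates via logarithmic monotonicity, Vitali, and the concentration defect. However, the step you yourself call the main obstacle does not work as described.

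\textbf{The homotopy.} To exclude zeros on $\partial W$ you need bounds uniform in $\alpha$, above all the bound on $\int_\Omega(\theta_h^{n+1})^{-q}\dx$ that feeds the nodal barrier. That bound comes only from the availability inequality. The availability inequality in turn is produced only by the exact thermodynamic structure of the scheme:
\begin{itemize}
\item the relation $e=\partial_\theta\psi$ and concavity of $\psi$ in $\theta$;
\item the splitting evaluated at the new temperature;
\item the $\eps_q\theta^{-q}$ term in $e$.
\end{itemize}
A homotopy that also deforms the nonlinearities towards a linear problem destroys this structure at intermediate parameters. There is then no entropy inequality and no $\alpha$-uniform control of $\theta_h^{-q}$. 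Nothing keeps the zeros inside $\{\theta_h>\delta\}$: linear problems do not preserve positivity of $\theta_h$, and the thermal functions are not even defined once $\theta_h$ reaches zero at a vertex. Even a structure-preserving interpolation such as $\alpha\psi+(1-\alpha)\psi_{\rm lin}$ gives at best $\int_\Omega\theta_h^{-q}\dx\le C\alpha^{-1}$, so $\delta(h)$ degenerates as $\alpha\downarrow0$. Your claim that no zero lies on $\partial W$ is therefore unsupported.

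The paper avoids this by keeping $e$ and $G_h^{n+1}$ unchanged and multiplying only the three Onsager terms by $\alpha$. Every zero then satisfies $\Avail_{a,h}^{n+1}+\alpha\tau\Diss_h^{n+1}\le\Avail_{a,h}^n$, so the integral bounds, the nodal barrier and the coefficient bounds are all uniform in $\alpha$. The endpoint $\alpha=0$ is not affine, but it decouples:
\begin{itemize}
\item the balance rows give $z_h^{n+1}=z_h^n$;
\item testing $(e(z_h^n,\theta_h^{n+1})-e(z_h^n,\theta_h^n),\xi_h)=0$ with $\xi_h=\theta_h^{n+1}-\theta_h^n$ and using $\partial_\theta e<0$ gives the unique zero $\theta_h^{n+1}=\theta_h^n$;
\item the same sign makes the Jacobian invertible, so the degree is $\pm1$.
\end{itemize}

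\textbf{Two smaller points.} First, ``concave part explicit'' must mean explicit in $z$ only. The scheme uses $D_z\psi_\cav(z_h^n,\theta_h^{n+1})$, because the convex/concave comparison in $z$ is made at the common new temperature. With $\theta_h^n$ in the concave part, the entropy increment contains the sign-indefinite term $M(\theta_h^{n+1}-\theta_h^n)DH(z_h^n)\cdot(z_h^{n+1}-z_h^n)$.

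Second, you never construct the discrete initial data. You need positive $\theta_{0,h}\in\Vh$ with uniformly bounded availability, such that $\int_\Omega e(z_{0,h},\theta_{0,h})\dx$ and $\int_\Omega\Sbulk(z_{0,h},\theta_{0,h})\dx$ converge to their continuous values; in particular you need $\eps_q\theta_{0,h}^{-q}\to\eps_q\theta_0^{-q}$ in $L^1$. This is what does the following:
\begin{itemize}
\item it identifies the initial terms in the weak energy balance and the entropy inequality;
\item it excludes an initial defect, so that $\mathfrak E_0=e(z_0,\theta_0)\,\mathrm dx$;
\item it gives the conservation identity with right-hand side $\int_\Omega e(z_0,\theta_0)\dx$.
\end{itemize}
Nodal interpolation of $\theta_0\in H^1$ is not even defined for $d\ge2$. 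The paper instead truncates, mollifies, interpolates and passes to a diagonal sequence (Lemma~\ref{lem:initial-approximation}).
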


\paragraph{Proof strategy.}
The proof is constructive: 
\begin{enumerate}
\item Subsections~\ref{sec:scheme}--\ref{sec:existence-fixed-h}
introduce the consistently integrated finite-element scheme, derive its exact
mass and internal-energy conservation and discrete entropy inequality, and use the coercive availability estimate together with the dimension-adapted nodal
barrier in a Brouwer degree argument. This yields a positive global discrete
trajectory. 

\item Subsections~\ref{sec:phase-compactness}--\ref{sec:defect} establish
strong compactness of the phase variables and of the inverse temperature.  The
lower-order thermal nonlinearities are then strongly compact, whereas the
critical inverse power may retain a non-negative concentration defect.  

\item Theorem~\ref{thm:conv_to_limit}
collects the resulting subsequential convergences, and
Subsection~\ref{sec:limit} passes to the limit in the balance laws and entropy
inequality.

\item The additional hypotheses under which the concentration defect
vanishes are given in Proposition~\ref{prop:no-defect}.
\end{enumerate}

\subsection{Consistent fully discrete finite-element scheme}
\label{sec:scheme}

Let $\{\mathcal T_h\}_{h>0}$ be a shape-regular, quasi-uniform family of
periodic simplicial meshes of $\Omega$, and let $\Vh$ be the continuous
piecewise affine periodic finite-element space.  Its nodal interpolation
operator is denoted by $\Ih$.  All pairings below are the ordinary,
exactly integrated $L^2$ pairings.  When one factor is only in $L^1$,
$(f,v_h)$ means the exact integral $\int_\Omega f v_h\dx$, which is
well-defined because $v_h$ is bounded for fixed $h$.

For a function $f$ that acts on $\Vh$, define
\begin{equation}
 \norm{f}_{-1,h}
 :=\sup_{0\ne v_h\in\Vh}
 \frac{|(f,v_h)|}{\norm{v_h}_{H^1(\Omega)}}.
 \label{eq:discrete-dual}
\end{equation}
For $v_h\in\Vh$ one has the elementary interpolation estimate
\begin{equation}
 \norm{v_h}_{L^2}^2
 \le \norm{v_h}_{H^1}\norm{v_h}_{-1,h}.
 \label{eq:discrete-interpolation}
\end{equation}
For every smooth periodic test function $\phi$, the nodal interpolant satisfies
\begin{equation}
 \norm{\Ih\phi-\phi}_{W^{1,\infty}(\Omega)}\to0.
 \label{eq:smooth-interpolation}
\end{equation}

Let $t^n=n\tau$, $n=0,\dots,N$, $N\tau=T$, and set
$$
 \deltat v_h^{n+1}:=\frac{v_h^{n+1}-v_h^n}{\tau}.
$$
We use the exact nonlinear functions
\begin{align}
 e_h^n&:=e(z_h^n,\theta_h^n),
 \label{eq:discrete-energy-density}\\
 G_h^{n+1}
 &:=D_z\psi_\vex(z_h^{n+1},\theta_h^{n+1})
   +D_z\psi_\cav(z_h^n,\theta_h^{n+1}).
 \label{eq:discrete-split}
\end{align}
The mobility is evaluated at the old state,
$\LL_h^n=\LL(z_h^n,\theta_h^n)$.

\begin{definition}[Fully discrete problem]
\label{def:fully-discrete}
Given $(\rho_h^n,\theta_h^n,\eta_h^n)\in\Vh^3$ with
$\theta_h^n>0$ on $\overline\Omega$, find
$$
 (\rho_h^{n+1},\mu_{\rho,h}^{n+1},\theta_h^{n+1},
  \eta_h^{n+1},\mu_{\eta,h}^{n+1})\in\Vh^5,
 \qquad \theta_h^{n+1}>0\ \text{on }\overline\Omega,
$$
such that, for all $(v_h,\zeta_h,\xi_h,w_h,\chi_h)\in\Vh^5$,
\begin{subequations}
\label{eq:fully-discrete}
\begin{align}
 (\deltat\rho_h^{n+1},v_h)
 &+\bigl(\LL_{11,h}^n\nabla\mu_{\rho,h}^{n+1}
          -\LL_{12,h}^n\nabla\theta_h^{n+1}
          +\LL_{13,h}^n\mu_{\eta,h}^{n+1},\nabla v_h\bigr)=0,
 \label{eq:fd-rho}\\
 (\mu_{\rho,h}^{n+1},\zeta_h)
 &=\gamma_\rho(\nabla\rho_h^{n+1},\nabla\zeta_h)
   +(G_{\rho,h}^{n+1},\zeta_h),
 \label{eq:fd-murho}\\
 (\deltat e_h^{n+1},\xi_h)
 &+\bigl((\LL_{12,h}^n)^\top\nabla\mu_{\rho,h}^{n+1}
          -\LL_{22,h}^n\nabla\theta_h^{n+1}
          +\LL_{23,h}^n\mu_{\eta,h}^{n+1},\nabla\xi_h\bigr)=0,
 \label{eq:fd-energy}\\
 (\deltat\eta_h^{n+1},w_h)
 &+\bigl(\LL_{13,h}^n\cdot\nabla\mu_{\rho,h}^{n+1}
          -\LL_{23,h}^n\cdot\nabla\theta_h^{n+1}
          +\LL_{33,h}^n\mu_{\eta,h}^{n+1},w_h\bigr)=0,
 \label{eq:fd-eta}\\
 (\mu_{\eta,h}^{n+1},\chi_h)
 &=\gamma_\eta(\nabla\eta_h^{n+1},\nabla\chi_h)
   +(G_{\eta,h}^{n+1},\chi_h).
 \label{eq:fd-mueta}
\end{align}
\end{subequations}
\end{definition}

\begin{remark}[Exact integration]
\label{rem:exact-integration}
All nonlinear terms in Definition~\ref{def:fully-discrete} are understood as
exact integrals.  This convention is part of the analysed method: replacing
the non-polynomial thermal terms by a fixed quadrature rule generally
introduces residuals into the discrete energy and entropy identities.  A
quadrature-based implementation therefore requires a separate consistency
and stability analysis.
\end{remark}

Define
\begin{align}
 \Energy_h^n&:=\int_\Omega e_h^n\dx,
 \label{eq:Eh}\\
 \Stot_h^n
 &:=\int_\Omega\Sbulk(z_h^n,\theta_h^n)\dx
 -\frac{\gamma_\rho}{2}\norm{\nabla\rho_h^n}_{L^2}^2
 -\frac{\gamma_\eta}{2}\norm{\nabla\eta_h^n}_{L^2}^2,
 \label{eq:Sh}\\
 Z_h^{n+1}&=(\nabla\mu_{\rho,h}^{n+1},
             -\nabla\theta_h^{n+1},
             \mu_{\eta,h}^{n+1}),
 \label{eq:Zh}\\
 \Diss_h^{n+1}&:=\int_\Omega
 (Z_h^{n+1})^\top\LL_h^n Z_h^{n+1}\dx.
 \label{eq:Dh}
\end{align}

\begin{theorem}[Discrete thermodynamic structure]
\label{thm:discrete-structure}
Every solution of Definition~\ref{def:fully-discrete} satisfies
\begin{align}
 \int_\Omega\rho_h^{n+1}\dx
 &=\int_\Omega\rho_h^n\dx,
 \label{eq:mass-conservation}\\
 \Energy_h^{n+1}&=\Energy_h^n,
 \label{eq:energy-conservation}\\
 \Stot_h^{n+1}-\Stot_h^n
 &\ge\tau\Diss_h^{n+1}
 +\frac{\gamma_\rho}{2}\norm{\nabla(\rho_h^{n+1}-\rho_h^n)}_{L^2}^2
 +\frac{\gamma_\eta}{2}\norm{\nabla(\eta_h^{n+1}-\eta_h^n)}_{L^2}^2.
 \label{eq:entropy-inequality}
\end{align}
\end{theorem}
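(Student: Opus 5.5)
Mass and energy conservation follow by testing with constants. Since $\Vh$ contains the constant function $1$ and its gradient vanishes, taking $v_h=1$ in \eqref{eq:fd-rho} removes the flux term and gives $(\deltat\rho_h^{n+1},1)=0$, which is \eqref{eq:mass-conservation}. Taking $\xi_h=1$ in \eqref{eq:fd-energy} gives $(\deltat e_h^{n+1},1)=0$ in the same way, and since the pairing is exact this is precisely \eqref{eq:energy-conservation}.

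For the entropy inequality I would mimic the formal computation behind \eqref{eq:formal-structure}. Test \eqref{eq:fd-energy} with $\xi_h=\theta_h^{n+1}$, \eqref{eq:fd-rho} with $v_h=\mu_{\rho,h}^{n+1}$, and \eqref{eq:fd-eta} with $w_h=\mu_{\eta,h}^{n+1}$. Then form the combination
\[
\tau\Bigl[(\deltat e_h^{n+1},\theta_h^{n+1})-(\deltat\rho_h^{n+1},\mu_{\rho,h}^{n+1})-(\deltat\eta_h^{n+1},\mu_{\eta,h}^{n+1})\Bigr].
\]
With the sign conventions of \eqref{eq:fd-rho}--\eqref{eq:fd-eta}, the flux terms recombine into $\tau\int_\Omega (Z_h^{n+1})^\top\LL_h^n Z_h^{n+1}\dx=\tau\Diss_h^{n+1}$. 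This uses the symmetry of $\LL_h^n$, exactly as in the continuous case. The chemical-potential equations \eqref{eq:fd-murho} and \eqref{eq:fd-mueta}, tested with $\zeta_h=\rho_h^{n+1}-\rho_h^n$ and $\chi_h=\eta_h^{n+1}-\eta_h^n$, convert $-(\rho_h^{n+1}-\rho_h^n,\mu_{\rho,h}^{n+1})$ into two pieces. The first is the gradient part
\[
-\gamma_\rho(\nabla\rho_h^{n+1},\nabla(\rho_h^{n+1}-\rho_h^n)).
\]
The second is $-(G_{\rho,h}^{n+1},\rho_h^{n+1}-\rho_h^n)$, and the $\eta$ terms split the same way. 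For the gradient part, the identity $2b(b-a)=b^2-a^2+(b-a)^2$ gives
\[
-\tfrac{\gamma_\rho}{2}\bigl(\norm{\nabla\rho_h^{n+1}}^2-\norm{\nabla\rho_h^n}^2\bigr)-\tfrac{\gamma_\rho}{2}\norm{\nabla(\rho_h^{n+1}-\rho_h^n)}^2.
\]
After this rearrangement, the inequality \eqref{eq:entropy-inequality} is reduced to a pointwise convexity inequality for the bulk densities.

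The key step is to show that, pointwise in $x$, with $z=z_h^{n+1}$, $z'=z_h^n$, $\theta=\theta_h^{n+1}$, $\theta'=\theta_h^n$,
\begin{equation*}
\Sbulk(z,\theta)-\Sbulk(z',\theta')\ \ge\ \theta\bigl(e(z,\theta)-e(z',\theta')\bigr)-G\cdot(z-z'),
\end{equation*}
where $G=D_z\psi_\vex(z,\theta)+D_z\psi_\cav(z',\theta)$. Integrating this and combining it with the previous step yields \eqref{eq:entropy-inequality}. Inserting $\Sbulk=\theta e-\psi$, the claim is equivalent to
\[
\psi(z',\theta')-\psi(z,\theta)-\theta'e(z',\theta')+\theta e(z',\theta')\ \ge\ -G\cdot(z-z').
\]
I would prove this by passing through the intermediate state $(z',\theta)$ and splitting into two differences.

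In $\theta$ at fixed $z'$: concavity of $\psi(z',\cdot)$ from Lemma~\ref{lem:thermo-identities} gives
\[
\psi(z',\theta)\le\psi(z',\theta')+\partial_\theta\psi(z',\theta')(\theta-\theta').
\]
Since $\partial_\theta\psi=e$, this reads $\psi(z',\theta')-\psi(z',\theta)+(\theta-\theta')e(z',\theta')\ge0$.

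In $z$ at fixed $\theta$: the splitting of Lemma~\ref{lem:global-splitting} gives
\[
\psi(z',\theta)-\psi(z,\theta)\ge -D_z\psi_\vex(z,\theta)\cdot(z-z')-D_z\psi_\cav(z',\theta)\cdot(z-z'),
\]
by convexity of $\psi_\vex(\cdot,\theta)$ (gradient at $z$) and concavity of $\psi_\cav(\cdot,\theta)$ (gradient at $z'$). The right-hand side equals $-G\cdot(z-z')$.

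Summing the two inequalities gives the claim. The point needing care is exactly that $G$ evaluates both splitting parts at the \emph{new} $\theta_h^{n+1}$, which makes the $z$-step consistent. The only other issues are well-definedness: the exact integrals must be finite and $\theta_h^{n+1}$ must be an admissible test function. Both hold because $\theta_h^{n},\theta_h^{n+1}>0$ are continuous on a compact set, so every singular term is bounded. I expect the bookkeeping of signs in the Onsager recombination to be the main place where errors could occur, but no genuine obstacle.
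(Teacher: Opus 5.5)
Your proposal is correct and follows essentially the same route as the paper: conservation by testing with $1$, the pointwise entropy-increment inequality through the intermediate state $(z_h^n,\theta_h^{n+1})$ (concavity in $\theta$, then the convex--concave splitting at the common new temperature), testing the constitutive equations with the phase increments together with the polarisation identity, and testing the balance laws with $-\mu_{\rho,h}^{n+1}$, $\theta_h^{n+1}$, $-\mu_{\eta,h}^{n+1}$ to recover $\tau\Diss_h^{n+1}$. Your sign bookkeeping in the Onsager recombination and in the Dirichlet terms checks out.
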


\begin{proof}
The proof is already given in \cite{BrunkHabrichOyedejiYangXu}, but for completeness we present the main arguments here.
Testing \eqref{eq:fd-rho} and \eqref{eq:fd-energy} with $1$ gives
\eqref{eq:mass-conservation}--\eqref{eq:energy-conservation}.
Fix $x\in\Omega$ and abbreviate
$z^+=z_h^{n+1}(x)$, $z^-=z_h^n(x)$,
$\theta^+=\theta_h^{n+1}(x)$, and $\theta^-=\theta_h^n(x)$.
Concavity of $\psi(z^-,\cdot)$ gives
$$
 \psi(z^-,\theta^-)-\psi(z^-,\theta^+)
 +(\theta^+-\theta^-)e(z^-,\theta^-)\ge0.
$$
At the common new temperature, convexity of $\psi_\vex$ and concavity of
$\psi_\cav$ give
$$
 \psi(z^-,\theta^+)-\psi(z^+,\theta^+)
 \ge-G_h^{n+1}(x)\cdot(z^+-z^-).
$$
Using $\Sbulk=\theta e-\psi$ and adding the two inequalities yields the
pointwise estimate
\begin{align}
 \Sbulk(z^+,\theta^+)-\Sbulk(z^-,\theta^-)
 \ge{}&\theta^+\bigl(e(z^+,\theta^+)-e(z^-,\theta^-)\bigr)
 -G_h^{n+1}\cdot(z^+-z^-).
 \label{eq:pointwise-entropy-increment}
\end{align}
Integrate \eqref{eq:pointwise-entropy-increment}.  Testing
\eqref{eq:fd-murho} and \eqref{eq:fd-mueta} with the two phase increments
and using
$|A|^2-|B|^2=2A\cdot(A-B)-|A-B|^2$ gives
\begin{align*}
 \Stot_h^{n+1}-\Stot_h^n
 \ge{}&(\theta_h^{n+1},e_h^{n+1}-e_h^n)
 -(\mu_{\rho,h}^{n+1},\rho_h^{n+1}-\rho_h^n)-(\mu_{\eta,h}^{n+1},\eta_h^{n+1}-\eta_h^n)\\
 &+\frac{\gamma_\rho}{2}\norm{\nabla(\rho_h^{n+1}-\rho_h^n)}_{L^2}^2
 +\frac{\gamma_\eta}{2}\norm{\nabla(\eta_h^{n+1}-\eta_h^n)}_{L^2}^2.
\end{align*}
Finally, test \eqref{eq:fd-rho}, \eqref{eq:fd-energy}, and
\eqref{eq:fd-eta} with $-\mu_{\rho,h}^{n+1}$,
$\theta_h^{n+1}$, and $-\mu_{\eta,h}^{n+1}$, respectively.  Symmetry of
$\LL_h^n$ gives
$$
 (\theta_h^{n+1},e_h^{n+1}-e_h^n)
 -(\mu_{\rho,h}^{n+1},\rho_h^{n+1}-\rho_h^n)
 -(\mu_{\eta,h}^{n+1},\eta_h^{n+1}-\eta_h^n)
 =\tau\Diss_h^{n+1}.
$$
This proves \eqref{eq:entropy-inequality}. 
\end{proof}

\subsection{Discrete availability, full norms, and time-derivative estimates}
\label{sec:estimates}

Set
\begin{equation}
 \Avail_{a,h}^n:=a\Energy_h^n-\Stot_h^n.
 \label{eq:discrete-availability}
\end{equation}
By Theorem~\ref{thm:discrete-structure},
\begin{equation}
 \Avail_{a,h}^{n+1}+\tau\Diss_h^{n+1}
 \le\Avail_{a,h}^n.
 \label{eq:availability-dissipation}
\end{equation}

\begin{theorem}[Uniform discrete estimates]
\label{thm:uniform-estimates}
Assume that the initial approximations have uniformly bounded availability.
Then a constant $C$, independent of $h$, $\tau$, and $n$, satisfies
\begin{align}
 &\sup_{0\le n\le N}
 \Bigl[
  \norm{\rho_h^n}_{H^1}^2+\norm{\eta_h^n}_{H^1}^2
  +\norm{\rho_h^n}_{L^4}^4+\norm{\eta_h^n}_{L^4}^4
  +\norm{\theta_h^n}_{L^1}
  +\int_\Omega(\theta_h^n)^{-q}\dx
 \Bigr]
 \nonumber\\
 &\quad
 +\tau\sum_{n=0}^{N-1}
 \Bigl[
  \norm{\mu_{\rho,h}^{n+1}}_{H^1}^2
  +\norm{\theta_h^{n+1}}_{H^1}^2
  +\norm{\mu_{\eta,h}^{n+1}}_{L^2}^2
 \Bigr]
 \le C.
 \label{eq:uniform-estimates}
\end{align}
In addition,
\begin{equation}
 \sup_n\left(
  \norm{e_h^n}_{L^1(\Omega)}
  +\norm{\Sbulk(z_h^n,\theta_h^n)}_{L^1(\Omega)}
 \right)\le C.
 \label{eq:energy-entropy-L1}
\end{equation}
\end{theorem}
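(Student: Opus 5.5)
The plan is to sum the availability inequality \eqref{eq:availability-dissipation} over $n$. This gives $\Avail_{a,h}^n+\tau\sum_{k\le n}\Diss_h^{k}\le\Avail_{a,h}^0\le C$ for every $n$.

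Writing $\Avail_{a,h}^n=\int_\Omega(ae_h^n-\Sbulk(z_h^n,\theta_h^n))\dx+\frac{\gamma_\rho}{2}\norm{\nabla\rho_h^n}^2+\frac{\gamma_\eta}{2}\norm{\nabla\eta_h^n}^2$, Lemma~\ref{lem:availability-coercivity} bounds the integrand below by $c(\theta_h^n+(\theta_h^n)^{-q}+|z_h^n|^4)-C$. Hence $\sup_n$ of $\norm{\nabla z_h^n}^2$, $\norm{z_h^n}_{L^4}^4$, $\norm{\theta_h^n}_{L^1}$ and $\int(\theta_h^n)^{-q}$ is bounded. The $L^4$ bound controls the $L^2$ part of the $H^1$ norms. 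The growth bound \eqref{eq:e-s-growth} then yields \eqref{eq:energy-entropy-L1}.

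From the uniform ellipticity \eqref{eq:L-ellipticity}, $\tau\sum\Diss_h^{n+1}\le C$ gives $\tau\sum(\norm{\nabla\mu_{\rho,h}^{n+1}}^2+\norm{\nabla\theta_h^{n+1}}^2+\norm{\mu_{\eta,h}^{n+1}}^2)\le C$. What remains is control of the spatial means of $\mu_{\rho,h}^{n+1}$ and $\theta_h^{n+1}$.

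For $\theta$, the mean is bounded by $\norm{\theta_h^{n+1}}_{L^1}/|\Omega|\le C$, so the mean-zero Poincaré inequality gives the $L^2(H^1)$ bound. Alternatively, Lemma~\ref{lem:poincare-subset} applies on a set $E=\{\theta_h^{n+1}\le K\}$, which has measure $\ge|\Omega|/2$ by Chebyshev.

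For $\mu_\rho$, I would test \eqref{eq:fd-murho} with $\zeta_h=1$, giving $\int\mu_{\rho,h}^{n+1}=\int G_{\rho,h}^{n+1}$. By \eqref{eq:split-growth}, $|G_h^{n+1}|\le C(1+\theta_h^{n+1})(1+|z_h^{n+1}|^3+|z_h^n|^3)$. The main obstacle is that $\theta_h^{n+1}$ is only in $L^1$ pointwise in time, while the cubic factor is only in $L^{4/3}$. I would handle this with Hölder and Sobolev: $\int\theta|z|^3\le\norm{\theta}_{L^{4}}\norm{z}_{L^4}^3$, and in $d\le3$ we have $H^1\hookrightarrow L^6\subset L^4$ with $\norm{z_h}_{H^1}\le C$. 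This gives $|\overline{\mu_{\rho,h}^{n+1}}|\le C(1+\norm{\theta_h^{n+1}}_{H^1})$. Squaring and summing against the $L^2(H^1)$ bound on $\theta$ just obtained yields $\tau\sum|\overline{\mu_{\rho,h}^{n+1}}|^2\le C$, and Poincaré then gives the full $\tau\sum\norm{\mu_{\rho,h}^{n+1}}_{H^1}^2\le C$. If sharper control were needed, one could also use $|z|^3\in L^2$ via the $L^6$ bound, pairing it with $\theta\in L^2$.

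The $\sup$ over $n$ includes $n=0$, which is covered by the assumption on the initial availability. All constants depend only on $a$, the constitutive data, $\lambda_0$ and $\Omega$.
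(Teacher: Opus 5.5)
Your proposal is correct and follows the paper's proof: sum the availability inequality, apply the coercivity of Lemma~\ref{lem:availability-coercivity} and the ellipticity of $\LL$, control the mean of $\theta_h^{n+1}$ through its $L^1$ bound plus Poincar\'e, and control the mean of $\mu_{\rho,h}^{n+1}$ by testing \eqref{eq:fd-murho} with $1$ and using \eqref{eq:split-growth}. The only difference is cosmetic: you pair $\norm{\theta_h^{n+1}}_{L^4}\norm{z_h}_{L^4}^3$, which uses the $H^1$ norm of $\theta_h^{n+1}$, whereas the paper pairs $\norm{\theta_h^{n+1}}_{L^2}\norm{z_h}_{L^6}^3$ (the alternative you mention yourself); both bounds are then squared and summed in time in the same way.
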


\begin{proof}
The availability has the exact representation
\begin{align*}
 \Avail_{a,h}^n
 ={}&\int_\Omega
 [a e(z_h^n,\theta_h^n)-\Sbulk(z_h^n,\theta_h^n)]\dx+\frac{\gamma_\rho}{2}\norm{\nabla\rho_h^n}_{L^2}^2
  +\frac{\gamma_\eta}{2}\norm{\nabla\eta_h^n}_{L^2}^2.
\end{align*}
Hence \eqref{eq:availability-dissipation},
Lemma~\ref{lem:availability-coercivity}, and the ellipticity of $\LL$ give
all time-uniform state bounds and
$$
 \tau\sum_n\left(
 \norm{\nabla\mu_{\rho,h}^{n+1}}_{L^2}^2
 +\norm{\nabla\theta_h^{n+1}}_{L^2}^2
 +\norm{\mu_{\eta,h}^{n+1}}_{L^2}^2\right)\le C.
$$
The Poincar\'e inequality with $L^1$ control, that is
 $$\norm{v}_{L^2(\Omega)}
 \le C_\Omega\bigl(\norm{\nabla v}_{L^2(\Omega)}
                    +\norm{v}_{L^1(\Omega)}\bigr)\qquad \forall v \in H^1(\Omega),$$
then yields
$$
 \norm{\theta_h^{n+1}}_{L^2}^2
 \le C\left(\norm{\nabla\theta_h^{n+1}}_{L^2}^2+1\right),
$$
and summation in time gives the full $L^2(0,T;H^1)$ estimate.

To control the mean of $\mu_{\rho,h}^{n+1}$, test
\eqref{eq:fd-murho} with $1$.  By \eqref{eq:split-growth}, H\"older's
inequality, and $H^1(\Omega)\hookrightarrow L^6(\Omega)$,
\begin{align*}
 \left|\frac1{|\Omega|}\int_\Omega\mu_{\rho,h}^{n+1}\dx\right|
 &\le C\int_\Omega(1+\theta_h^{n+1})
  (1+|z_h^{n+1}|^3+|z_h^n|^3)\dx\le C\bigl(1+\norm{\theta_h^{n+1}}_{L^2}\bigr).
\end{align*}
The ordinary Poincar\'e inequality and the gradient dissipation therefore
yield the full $H^1$ estimate for $\mu_{\rho,h}$.  Finally,
\eqref{eq:energy-entropy-L1} follows directly from
\eqref{eq:e-s-growth}.
\end{proof}

\begin{corollary}[Uniform temperature tails]
\label{cor:tails}
For every $\kappa\in(0,1)$ and $R>1$,
\begin{equation}
 \sup_n|\{\theta_h^n<\kappa\}|\le C\kappa^q,
 \qquad
 \sup_n|\{\theta_h^n>R\}|\le \frac{C}{R}.
 \label{eq:temperature-tails}
\end{equation}
No mesh-uniform pointwise lower or upper bound is asserted.
\end{corollary}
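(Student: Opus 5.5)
The two tail bounds are Chebyshev--Markov inequalities applied to the time-uniform integrability of $\theta_h^n$ and $(\theta_h^n)^{-q}$ in Theorem~\ref{thm:uniform-estimates}. Let $C_0$ be the constant there, independent of $h$, $\tau$ and $n$. By \eqref{eq:uniform-estimates},
$$
 \sup_n\int_\Omega(\theta_h^n)^{-q}\dx\le C_0,
 \qquad
 \sup_n\norm{\theta_h^n}_{L^1}\le C_0 .
$$
These bounds come from the availability inequality \eqref{eq:availability-dissipation} together with the coercivity \eqref{eq:availability-coercive}. Positivity $\theta_h^n>0$ is part of Definition~\ref{def:fully-discrete}, so both integrands are well defined and non-negative.

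For the low-temperature tail, fix $\kappa\in(0,1)$ and $n$. The map $s\mapsto s^{-q}$ is decreasing on $(0,\infty)$, so $(\theta_h^n)^{-q}>\kappa^{-q}$ on $\{\theta_h^n<\kappa\}$. Hence
$$
 \kappa^{-q}\,|\{\theta_h^n<\kappa\}|
 \le\int_{\{\theta_h^n<\kappa\}}(\theta_h^n)^{-q}\dx
 \le C_0 .
$$
This gives $|\{\theta_h^n<\kappa\}|\le C_0\kappa^q$, and the constant does not depend on $n$, $h$ or $\tau$.

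For the high-temperature tail, fix $R>1$. Since $\theta_h^n>R$ on $\{\theta_h^n>R\}$,
$$
 R\,|\{\theta_h^n>R\}|\le\int_\Omega\theta_h^n\dx\le C_0 ,
$$
so $|\{\theta_h^n>R\}|\le C_0/R$. Taking the supremum over $n$ in both estimates proves \eqref{eq:temperature-tails} with $C=C_0$.

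No step here is a genuine obstacle; the only point to check is that the constant is uniform, which is guaranteed by the assumed uniformly bounded initial availability. The final sentence of the statement is only a disclaimer. Corollary~\ref{cor:tails} controls the measure of the sets where $\theta_h^n$ is small or large, not $\inf\theta_h^n$ or $\sup\theta_h^n$. Nothing further needs to be proved, since the nodal positivity from Lemma~\ref{lem:fe-positivity-barrier} holds only at fixed mesh.
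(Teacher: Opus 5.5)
Your proof is correct and is exactly the argument the paper intends. The corollary is stated there without proof as an immediate consequence of the time-uniform bounds on $\norm{\theta_h^n}_{L^1}$ and $\int_\Omega(\theta_h^n)^{-q}\dx$ in Theorem~\ref{thm:uniform-estimates}, and these yield both tails through Chebyshev--Markov, just as you wrote.
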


\begin{proposition}[Discrete time-derivative estimates]
\label{prop:discrete-time-derivatives}
Under the assumptions of Theorem~\ref{thm:uniform-estimates},
\begin{align}
 \tau\sum_{n=0}^{N-1}
 \norm{\deltat\rho_h^{n+1}}_{-1,h}^2&\le C,
 \label{eq:dt-rho}\\
 \tau\sum_{n=0}^{N-1}
 \norm{\deltat e_h^{n+1}}_{-1,h}^2&\le C,
 \label{eq:dt-e}\\
 \tau\sum_{n=0}^{N-1}
 \norm{\deltat\eta_h^{n+1}}_{L^2}^2&\le C.
 \label{eq:dt-eta}
\end{align}
\end{proposition}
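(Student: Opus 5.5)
The plan is to read each time-derivative bound off the corresponding discrete equation by duality, and then control the Onsager fluxes with the uniform bounds of Theorem~\ref{thm:uniform-estimates}.

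\emph{Step 1: the $\rho$-estimate.} By \eqref{eq:fd-rho} and the definition \eqref{eq:discrete-dual}, for every $v_h\in\Vh$ we have
$$
 |(\deltat\rho_h^{n+1},v_h)|
 \le \norm{\mathcal J_{\rho,h}^{n+1}}_{L^2}\norm{\nabla v_h}_{L^2},
 \qquad
 \mathcal J_{\rho,h}^{n+1}=\LL_{11,h}^n\nabla\mu_{\rho,h}^{n+1}-\LL_{12,h}^n\nabla\theta_h^{n+1}+\LL_{13,h}^n\mu_{\eta,h}^{n+1}.
$$
Assumption~\ref{ass:Onsager} bounds every block of $\LL$ by $\lambda_1$. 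This gives the pointwise bound $|\mathcal J_{\rho,h}^{n+1}|\le\lambda_1|Z_h^{n+1}|$. Hence
$$
 \norm{\deltat\rho_h^{n+1}}_{-1,h}^2\le\lambda_1^2\norm{Z_h^{n+1}}_{L^2}^2.
$$
Multiplying by $\tau$ and summing, \eqref{eq:uniform-estimates} gives \eqref{eq:dt-rho}. Alternatively, one can use $\norm{Z}_{L^2}^2\le\lambda_0^{-1}\Diss_h^{n+1}$ together with \eqref{eq:availability-dissipation}.

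\emph{Step 2: the energy estimate.} The same argument applies to \eqref{eq:fd-energy} with $\mathcal J_{e,h}^{n+1}$ in place of $\mathcal J_{\rho,h}^{n+1}$, and yields \eqref{eq:dt-e}. Because $\deltat e_h^{n+1}$ lies only in $L^1$ and not in $\Vh$, it must be read as a functional on $\Vh$ through the exact integral pairing. This is admissible by the convention of Section~\ref{sec:scheme}.

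\emph{Step 3: the $\eta$-estimate.} Since $\deltat\eta_h^{n+1}\in\Vh$, I will test \eqref{eq:fd-eta} with $w_h=\deltat\eta_h^{n+1}$. Cauchy--Schwarz and the bound $|\mathcal R_{\eta,h}^{n+1}|\le\lambda_1|Z_h^{n+1}|$ give
$$
 \norm{\deltat\eta_h^{n+1}}_{L^2}\le\lambda_1\norm{Z_h^{n+1}}_{L^2}.
$$
Squaring and summing then gives \eqref{eq:dt-eta}.

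The only point that needs care is the energy estimate in Step 2. There one must check that the negative norm is well defined for the non-polynomial quantity $e_h^{n+1}-e_h^n$, which holds by exact integration and positivity of $\theta_h$ at fixed mesh. Beyond that, the argument is routine.
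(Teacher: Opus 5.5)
Your proof is correct and follows the paper's argument: the $\rho$ and energy bounds come from \eqref{eq:fd-rho} and \eqref{eq:fd-energy} by duality in $\norm{\cdot}_{-1,h}$ together with boundedness of $\LL$, the $\eta$ bound comes from testing \eqref{eq:fd-eta} with $w_h=\deltat\eta_h^{n+1}$, and the summed dissipation estimate closes all three. A small refinement: each flux is a block of $\LL Z_h^{n+1}$, so the cleanest justification of $|\mathcal J|\le\lambda_1|Z_h^{n+1}|$ is $|\LL Z|\le\lambda_1|Z|$ for the symmetric $\LL$, rather than bounding the blocks separately, which would only give $\sqrt3\,\lambda_1$.
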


\begin{proof}
The first two estimates follow from \eqref{eq:fd-rho} and
\eqref{eq:fd-energy}, the boundedness of the mobility, and the definition
of the discrete dual norm.  For the non-conserved variable take
$w_h=\deltat\eta_h^{n+1}$ in \eqref{eq:fd-eta}; then
$$
 \norm{\deltat\eta_h^{n+1}}_{L^2}
 \le C\Bigl(
 \norm{\nabla\mu_{\rho,h}^{n+1}}_{L^2}
 +\norm{\nabla\theta_h^{n+1}}_{L^2}
 +\norm{\mu_{\eta,h}^{n+1}}_{L^2}\Bigr).
$$
The dissipation estimate completes the proof.
\end{proof}

\subsection{Initial approximation, positivity, and fixed-mesh solvability}
\label{sec:existence-fixed-h}

\begin{lemma}[Positive, thermodynamically consistent initial data]
\label{lem:initial-approximation}
Let \eqref{eq:initial-data} hold.  There are triples
$(\rho_{0,h},\theta_{0,h},\eta_{0,h})\in\Vh^3$ with
$\theta_{0,h}>0$ on $\overline\Omega$ such that
\begin{equation}
 \rho_{0,h}\to\rho_0,\qquad
 \eta_{0,h}\to\eta_0,\qquad
 \theta_{0,h}\to\theta_0
 \quad\text{strongly in }H^1(\Omega),
 \label{eq:initial-H1-convergence}
\end{equation}
and
\begin{align}
 \int_\Omega e(z_{0,h},\theta_{0,h})\dx
 &\longrightarrow\int_\Omega e(z_0,\theta_0)\dx,
 \label{eq:initial-energy-convergence}\\
 \int_\Omega\Sbulk(z_{0,h},\theta_{0,h})\dx
 &\longrightarrow\int_\Omega\Sbulk(z_0,\theta_0)\dx.
 \label{eq:initial-entropy-convergence}
\end{align}
The conserved phase may additionally be chosen so that
$\int_\Omega\rho_{0,h}\dx=\int_\Omega\rho_0\dx$.
\end{lemma}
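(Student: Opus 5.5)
Since $\theta_0\in H^1$ is only positive a.e.\ with $\theta_0^{-q}\in L^1$, and $V_h$ functions must be strictly positive at every node, we first regularise $\theta_0$ from below and then project onto $V_h$. The phase variables are routine: take $\rho_{0,h},\eta_{0,h}$ to be $H^1$-projections (or Scott--Zhang/Clément quasi-interpolants) of $\rho_0,\eta_0$, so that they converge strongly in $H^1$. For the conserved phase we add the constant $|\Omega|^{-1}\int_\Omega(\rho_0-\rho_{0,h})\dx$, which tends to zero and is in $V_h$. Strong $H^1$ convergence and $H^1\hookrightarrow L^6$ give $z_{0,h}\to z_0$ in $L^4$. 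Then, by \eqref{eq:phase-growth}, $F_j(z_{0,h})\to F_j(z_0)$ in $L^1$ via $|F_j(a)-F_j(b)|\le C(1+|a|^3+|b|^3)|a-b|$.

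For the temperature, fix a cut-off $k>0$ and set $\theta_0^k=\max\{\theta_0,k\}\in H^1$, with $\theta_0^k\ge k$. Let $\theta_{0,h}^k=\Pi_h\theta_0^k$ for a positivity-preserving interpolant. I would use the Clément-type (local $L^2$-average) quasi-interpolant, whose nodal values are averages of $\theta_0^k$ over vertex patches and are therefore $\ge k$; since the basis is nodal affine, $\theta_{0,h}^k\ge k$ on $\overline\Omega$. Its $H^1$-stability and approximation give $\theta_{0,h}^k\to\theta_0^k$ in $H^1$ as $h\to0$, for fixed $k$. On the set where both arguments are $\ge k$, the functions $\theta\mapsto\theta^{-1},\theta^{1-q},\theta^{-q},\ln\theta$ are Lipschitz with constants depending on $k$, and $\theta$, $\theta\ln\theta$ are controlled through $L^2$ convergence. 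Hence $e(z_{0,h},\theta_{0,h}^k)\to e(z_0,\theta_0^k)$ and $\Sbulk(z_{0,h},\theta_{0,h}^k)\to\Sbulk(z_0,\theta_0^k)$ in $L^1$, using \eqref{eq:energy-density}--\eqref{eq:entropy-density}. For $\Sbulk$, the product $\theta F_0$ does not appear; only $F_0$ does, and in $e$ only $F_1$, so no mixed products need care.

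Next we let $k\to0$. We have $\theta_0^k\to\theta_0$ in $H^1$: indeed $\nabla\theta_0^k=\nabla\theta_0\mathbf 1_{\{\theta_0>k\}}$ and $|\{0<\theta_0\le k\}|\to0$ by the a.e.\ positivity, so dominated convergence applies. Moreover $(\theta_0^k)^{-q}\le\theta_0^{-q}\in L^1$ and $|\ln\theta_0^k|\le|\ln\theta_0|+C(1+\theta_0)$, and similarly for the other powers, via the comparisons in the proof of Lemma~\ref{lem:availability-coercivity}. So dominated convergence gives $e(z_0,\theta_0^k)\to e(z_0,\theta_0)$ and $\Sbulk(z_0,\theta_0^k)\to\Sbulk(z_0,\theta_0)$ in $L^1$. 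A diagonal choice $k=k(h)\downarrow0$ slowly enough then yields $\theta_{0,h}:=\theta_{0,h}^{k(h)}>0$ satisfying \eqref{eq:initial-H1-convergence}--\eqref{eq:initial-entropy-convergence}, since both errors can be made smaller than $1/j$ by choosing $k=1/j$ and then $h$ small.

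The main obstacle is the positivity-preserving approximation with strong $H^1$ convergence. Nodal interpolation is not defined on $H^1$ for $d\ge2$, and the $L^2$ or $H^1$ projection does not preserve lower bounds. I would therefore rely on a Clément/Scott--Zhang variant with averaging over patches and nonnegative weights, which preserves the bound $\ge k$. Alternatively, I would mollify $\theta_0^k$ first (mollification on the torus preserves $\ge k$ and gives smooth functions) and then apply the nodal interpolant $I_h$, using \eqref{eq:smooth-interpolation}; this adds one more diagonal parameter but avoids nonstandard interpolants. I would take the mollification route for safety.
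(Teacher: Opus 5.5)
Your proposal is correct and, along the mollification route you settle on, it is essentially the paper's argument: truncate $\theta_0$, mollify on the torus, apply the nodal interpolant $I_h$, choose the parameters diagonally, and shift $\rho_{0,h}$ by a constant. The only real difference is that you truncate from below only and send $k\to0$ by dominated convergence using $\theta_0^k\ge\theta_0$, which is slightly simpler than the paper's two-sided truncation $T_m$ followed by the $|r^{1/q}-s^{1/q}|^q\le|r-s|$ and Vitali argument. Discard the patch-average Cl\'ement alternative, however. Averages over vertex patches do not reproduce affine functions on general quasi-uniform meshes; already in 1D with alternating cell sizes the gradient error stays of order one, so $H^1$ convergence fails. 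Averages over balls centred at the nodes would preserve positivity and converge in $H^1$.
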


\begin{proof}
Set $T_m(s)=\min\{m,\max\{m^{-1},s\}\}$.  The Sobolev chain rule gives
$$
 \nabla T_m(\theta_0)
 =\mathbf 1_{\{m^{-1}<\theta_0<m\}}\nabla\theta_0
 \quad\text{a.e.}
$$
Since $\theta_0>0$ almost everywhere, dominated convergence, together with
the usual upper-truncation estimate in $L^2$, yields
$T_m(\theta_0)\to\theta_0$ strongly in $H^1(\Omega)$.  Moreover,
$T_m(\theta_0)^{-q}\to\theta_0^{-q}$ in $L^1$: on
$\{\theta_0<m^{-1}\}$ the difference is bounded by $\theta_0^{-q}$,
whereas on $\{\theta_0>m\}$ it is bounded by $m^{-q}$.

For fixed $m$, periodic convolution with a non-negative mollifier preserves
the bounds $m^{-1}\le T_m(\theta_0)\le m$ and converges in $H^1$.  On this
compact interval all reciprocal powers and the logarithm are Lipschitz.
Mollifying the phase variables as well and taking a diagonal sequence gives
smooth periodic triples $(z_{0,j},\theta_{0,j})$ such that
\begin{equation}
 z_{0,j}\to z_0\ \text{in }H^1,
 \qquad
 \theta_{0,j}\to\theta_0\ \text{in }H^1,
 \qquad
 \theta_{0,j}^{-q}\to\theta_0^{-q}\ \text{in }L^1,
 \label{eq:smooth-initial-diagonal}
\end{equation}
with $\min_{\overline\Omega}\theta_{0,j}>0$ for every $j$.
After extraction, all state variables converge almost everywhere.

The singular convergence in \eqref{eq:smooth-initial-diagonal} controls the
remaining thermal terms quantitatively.  The elementary inequality
$|r^{1/q}-s^{1/q}|^q\le |r-s|$ for $r,s\ge0$ gives
$$
 \theta_{0,j}^{-1}\to\theta_0^{-1}
 \quad\text{strongly in }L^q(\Omega).
$$
It follows that
$\theta_{0,j}^{1-q}\to\theta_0^{1-q}$ strongly in $L^1$.  Furthermore,
for every $p>1$ there is $C_{p,q}$ such that
$[(-\ln s)^+]^p\le C_{p,q}(1+s^{-q})$ for $s>0$; hence the negative
logarithms are uniformly integrable and converge strongly in $L^1$ by
Vitali's theorem.  The map $s\mapsto(\ln s)^+$ is one-Lipschitz on
$(0,\infty)$, so the positive logarithms converge strongly in $L^2$.
Finally, $H^1(\Omega)\hookrightarrow L^4(\Omega)$ for $d\le3$, and thus
the phase polynomials converge strongly in $L^1$.  The explicit formulas
\eqref{eq:energy-density}--\eqref{eq:entropy-density} now imply convergence
of the smooth bulk energies and entropies in $L^1$.

For each fixed smooth triple, nodal interpolation converges in $H^1$ and
uniformly.  Consequently, for all sufficiently small $h$ the interpolated
inverse temperature remains strictly positive, and all its thermodynamic
compositions converge uniformly.  A second diagonal choice of the smoothing
index as $h\downarrow0$ produces the asserted finite-element family and
\eqref{eq:initial-H1-convergence}--\eqref{eq:initial-entropy-convergence}.
The constant correction
$$
 \rho_{0,h}\mapsto\rho_{0,h}
 +\frac1{|\Omega|}\left(\int_\Omega\rho_0\dx
                 -\int_\Omega\rho_{0,h}\dx\right)
$$
enforces the exact mass constraint.  Its size tends to zero, so it does not
alter any of the established convergences.
\end{proof}

\begin{lemma}[Dimension-adapted finite-element positivity barrier]
\label{lem:fe-positivity-barrier}
Fix a simplicial mesh $\mathcal T_h$ in dimension $d$ and let $q\ge d$.
For every $C_0>0$ there is $c_h(C_0)>0$ such that every
$v_h\in\Vh$ satisfying
\begin{equation}
 v_h>0\ \text{on }\overline\Omega,
 \qquad
 \int_\Omega v_h\dx+\int_\Omega v_h^{-q}\dx\le C_0
 \label{eq:barrier-assumptions}
\end{equation}
obeys
\begin{equation}
 \min_{\overline\Omega}v_h\ge c_h(C_0).
 \label{eq:barrier-conclusion}
\end{equation}
The constant is permitted to depend on the fixed mesh.  If $q<d$, the
conclusion is false in general.
\end{lemma}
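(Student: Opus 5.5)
The plan is to argue locally on a single simplex containing a minimising vertex, and then to show by an explicit counterexample that the exponent threshold is sharp.

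Since $v_h$ is piecewise affine, its minimum over $\overline\Omega$ is attained at a vertex $x_0$ of the mesh. Set $m:=v_h(x_0)>0$ and fix a simplex $K\in\mathcal T_h$ with vertex $x_0$. The hypothesis \eqref{eq:barrier-assumptions} bounds the values of $v_h$ at the other vertices: for a fixed mesh, the $L^1$ norm and the nodal maximum on $K$ are equivalent norms on affine functions, so positivity gives $\int_K v_h\dx\ge c_K\max_{K}v_h$. Hence $M:=\max_K v_h\le C_0/c_K$. Write $v_h=m\lambda_0+\sum_{i\ge1}v_i\lambda_i$ on $K$ in barycentric coordinates, with $v_i\le M$. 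Then $v_h\le m+M\,|x-x_0|/h_K'$ on $K$ for a constant $h_K'$ depending only on the geometry of $K$, since $1-\lambda_0(x)\le C_K|x-x_0|$.

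The key step is the lower bound on the inverse moment. Using the upper bound on $v_h$, we get
$$
 C_0\ge\int_K v_h^{-q}\dx\ge\int_{K}\bigl(m+C_K' M|x-x_0|\bigr)^{-q}\dx .
$$
Because $K$ contains a cone with vertex $x_0$ and positive solid angle $\omega_K$, passing to polar coordinates gives the lower bound
$$
 \omega_K\int_0^{r_K}\frac{r^{d-1}}{(m+C M r)^q}\,\mathrm dr .
$$
Substituting $r=ms/(CM)$, this equals $m^{d-q}(CM)^{-d}\omega_K\int_0^{CMr_K/m}s^{d-1}(1+s)^{-q}\,\mathrm ds$. There are two cases. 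For $q>d$ the integral stays bounded below as $m\to0$, so the expression is $\gtrsim m^{d-q}\to\infty$. For $q=d$ the integral grows like $\ln(CMr_K/m)\to\infty$. In either case the bound by $C_0$ forces $m\ge c_h(C_0)$; one obtains $c_h$ by contradiction or by solving explicitly. The constants depend on the minimal simplex geometry, which is uniform over the finitely many simplices of the fixed mesh. This computation is the main (but mild) point, and the borderline case $q=d$ needs the logarithmic divergence.

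For sharpness when $q<d$, take a vertex $x_0$ and let $\phi_0$ be its hat function. Consider $v_h=1-(1-\epsilon)\phi_0$. On its support $v_h\ge\epsilon+c|x-x_0|/h$, so $\int v_h^{-q}\dx\le C+C\int_0^{h}(\epsilon+cr/h)^{-q}r^{d-1}\,\mathrm dr$. This is bounded uniformly in $\epsilon$ since $r^{d-1-q}$ is integrable near $0$ when $q<d$. Also $\int v_h\le|\Omega|$. Thus the hypothesis \eqref{eq:barrier-assumptions} holds with a fixed $C_0$ while $\min v_h=\epsilon\to0$, so no positive $c_h(C_0)$ exists.
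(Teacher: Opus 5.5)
Your proof is correct and follows essentially the same route as the paper: minimum at a vertex, a fixed-mesh bound on the nodal values from the $L^1$ part of the hypothesis, and the affine majorant $m+CM\,|x-x_0|$ (the paper uses $\delta+M_h(1-\lambda_0)$) near the minimising vertex, with a radial integral diverging like $m^{d-q}$ for $q>d$ and like $|\ln m|$ for $q=d$, plus the same counterexample (value $\epsilon$ at one vertex, $1$ elsewhere) for $q<d$. The differences are cosmetic: you use norm equivalence on one simplex and a cone in polar coordinates, where the paper uses the global nodal identity $\int_\Omega v_h\,\mathrm dx=\sum_a\bigl(\int_\Omega\varphi_a\,\mathrm dx\bigr)v_h(a)$ and the exact barycentric formula $\int_K f(1-\lambda_0)\,\mathrm dx=d|K|\int_0^1 f(r)r^{d-1}\,\mathrm dr$; you should also say explicitly that $M$ is replaced by its a priori bound $C_0/c_K$, which is legitimate because your lower bound is decreasing in $M$.
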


\begin{proof}
Because $v_h$ is affine on every simplex, its minimum is attained at a
vertex.  Let $a_0$ be a minimum vertex and put
$\delta=v_h(a_0)$.  Positivity and the exact affine identity
$$
 \int_\Omega v_h\dx
 =\sum_{a\in\mathcal N_h}\left(\int_\Omega\varphi_a\dx\right)v_h(a)
$$
give a fixed-mesh upper bound $v_h(a)\le M_h(C_0)$ at every vertex.
Choose a simplex $K$ adjacent to $a_0$, let $\lambda_0$ be the barycentric
coordinate of $a_0$, and set $r=1-\lambda_0$.  Then
$$
 v_h(x)\le\delta+M_h r\qquad(x\in K).
$$
The affine change of variables on a simplex gives, for every non-negative
function $f$,
$$
 \int_K f(1-\lambda_0(x))\dx
 =d|K|\int_0^1 f(r)r^{d-1}\,\mathrm dr.
$$
Consequently,
\begin{equation}
 \int_\Omega v_h^{-q}\dx
 \ge d|K|\int_0^1\frac{r^{d-1}}{(\delta+M_h r)^q}\,\mathrm dr.
 \label{eq:barrier-integral}
\end{equation}
If $q>d$, integration over $0<r<\delta/M_h$ shows that the right-hand side
is bounded below by $c_h\delta^{d-q}$.  If $q=d$, integration over
$\delta/M_h<r<1/2$ gives the lower bound
$c_h|\ln\delta|-C_h$.  In either case the right-hand side diverges as
$\delta\downarrow0$, contradicting \eqref{eq:barrier-assumptions}.  This
proves \eqref{eq:barrier-conclusion}.

For $q<d$, prescribe the nodal value $\delta$ at one vertex $a_0$ of a
fixed mesh and the value $1$ at every other vertex.  On each simplex
adjacent to $a_0$ the resulting finite-element function has the form
$v_\delta=\delta+(1-\delta)(1-\lambda_0)$, and on all remaining simplices
it equals $1$.  The formula above shows that
$\int_\Omega v_\delta^{-q}\dx$ remains bounded while
$v_\delta(a_0)=\delta\downarrow0$.  Hence the exponent condition is sharp
for this fixed-mesh argument.
\end{proof}

\begin{theorem}[One-step existence for the consistent scheme]
\label{thm:one-step-existence}
For every fixed $h>0$, $\tau>0$, and old state
$(\rho_h^n,\theta_h^n,\eta_h^n)\in\Vh^3$ with
$\theta_h^n>0$ on $\overline\Omega$, problem
\eqref{eq:fully-discrete} has at least one solution with
$\theta_h^{n+1}>0$ on $\overline\Omega$.
\end{theorem}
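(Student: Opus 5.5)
The plan is a Brouwer degree argument via Lemma~\ref{lem:topological-degree}, applied in the finite-dimensional space $\Vh^5$ of nodal coefficients. The open set $V$ consists of tuples with $\theta_h>0$ at all vertices. Writing the unknown as $U=(\rho_h,\mu_{\rho,h},\theta_h,\eta_h,\mu_{\eta,h})$, I define $\mathbf G(U,\alpha)$ by the residuals of \eqref{eq:fully-discrete}, tested against the nodal basis.

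The homotopy parameter $\alpha\in[0,1]$ multiplies the Onsager matrix through $\LL_h^n\mapsto \alpha\LL_h^n+(1-\alpha)\lambda_0 I$. At $\alpha=0$ the system is therefore block diagonal with identity mobility. The inverse-temperature equation is kept in its exact form $(e(z_h,\theta_h)-e_h^n,\xi_h)/\tau$ for every $\alpha$. Every ingredient used in Theorem~\ref{thm:discrete-structure} survives the homotopy: the matrix remains symmetric and uniformly positive definite with constant $\lambda_0$, and the convex--concave splitting and the concavity in $\theta$ do not involve $\alpha$. Hence every zero of $\mathbf G(\cdot,\alpha)$ in $V$ satisfies mass and energy conservation and the entropy inequality. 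It follows that $\Avail_{a,h}^{n+1}\le\Avail_{a,h}^n$.

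\textbf{A priori bounds.} Lemma~\ref{lem:availability-coercivity} turns this into bounds, uniform in $\alpha$, on $\int\theta_h+\int\theta_h^{-q}$, on $\norm{z_h}_{H^1}$, and on the dissipation. The dissipation controls $\nabla\mu_{\rho,h}$, $\nabla\theta_h$ and $\mu_{\eta,h}$; the means of $\mu_{\rho,h}$ and $\theta_h$ are handled as in Theorem~\ref{thm:uniform-estimates}. Norm equivalence on the fixed finite-dimensional space then gives a bound $\norm{U}\le R_h$. The barrier Lemma~\ref{lem:fe-positivity-barrier}, which uses $q\ge d$, gives $\min\theta_h\ge c_h>0$. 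I take
\[
W=\{U:\ \norm{U}<R_h+1,\ \theta_h(a)>c_h/2\ \text{at every vertex } a\}\Subset V.
\]
No zero of $\mathbf G(\cdot,\alpha)$ lies on $\partial W$ for any $\alpha$. Continuity of $\mathbf G$ on $\overline W\times[0,1]$ is clear, because $e$, $\psi$ and their derivatives are smooth on the region where $\theta_h$ is bounded below, and exact integration preserves continuity.

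\textbf{The degree at $\alpha=0$ (main obstacle).} It remains to show $\deg(\mathbf G(\cdot,0),W,0)\ne0$. At $\alpha=0$ the system has a variational, monotone-type structure. I plan to show that $\mathbf G(\cdot,0)$ is, after a sign change in the $\theta$ and chemical-potential components, the gradient of a strictly convex functional on $W$, or else homotopic to one. The candidate functional is the discrete availability at the new time plus the dissipative quadratic terms. Its $\theta$-part is $a e-\Sbulk$, which has $\theta$-derivative $(a-\theta)\partial_\theta e$; this is not monotone in $\theta$, so that form does not work directly. Instead, I will use the entropy variable: writing $\theta$ as the unknown, the map $\theta\mapsto -(e(\cdot,\theta),\xi)$ is strictly monotone by Lemma~\ref{lem:thermo-identities}. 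The phase part is convex thanks to the splitting.

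A cleaner route, which I will try first, is a second homotopy inside $\alpha=0$. It deforms the nonlinear term $e$ to its linearisation, and the splitting terms $G$ to their convex parts. By the same availability bounds no zeros cross $\partial W$, and the endpoint is a decoupled problem with a unique zero, strictly monotone Jacobian, and positive Jacobian determinant. That gives degree $\pm1$, and Lemma~\ref{lem:topological-degree} then yields a solution with $\theta_h^{n+1}>0$.

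If keeping uniform bounds along this auxiliary homotopy fails, the fallback is to prove uniqueness at $\alpha=0$ directly by strict monotonicity and compute the local degree from a nonsingular Jacobian.
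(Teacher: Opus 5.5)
\textbf{There is a genuine gap: the degree at $\alpha=0$ is never computed, and none of your three routes works as stated.}

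The first part of your argument is sound. The homotopy $\LL\mapsto\alpha\LL+(1-\alpha)\lambda_0 I$ stays symmetric and $\lambda_0$-elliptic. Hence the discrete entropy inequality, the availability bound, the barrier of Lemma~\ref{lem:fe-positivity-barrier} and your set $W$ all work as you describe. But the degree at the endpoint is the step you yourself flag, and it is where each route breaks down.

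\emph{Route (a), gradient structure.} The $\alpha=0$ residual is not a gradient field, not even of convex--concave saddle type. The concave part is frozen at $z_h^n$ but evaluated at the new $\theta_h$. So the mixed derivatives $\partial_\theta G=DF_1(z)+M\bigl(DH(z)-DH(z_h^n)\bigr)$ and $D_z e=DF_1(z)$ do not match.

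\emph{Route (b), linearising $e$.} Deforming $e$ to its linearisation while keeping $\psi$ and $\Sbulk$ fixed destroys $e=\partial_\theta\psi$ and $\Sbulk=\theta e-\psi$. These identities, together with concavity in $\theta$, are exactly what give \eqref{eq:pointwise-entropy-increment}. Without them the deformed scheme neither conserves $\int e$ nor dissipates entropy. You then lose the availability bound, the control of $\int\theta_h^{-q}$, and with it the barrier. Zeros can therefore leave $W$ through $\{\theta_h(a)=c_h/2\}$, and a linear energy equation need not even keep $\theta_h$ positive. So the claim ``by the same availability bounds no zeros cross $\partial W$'' is unfounded.

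\emph{Route (c), monotonicity and uniqueness.} Your $\alpha=0$ problem is still a fully coupled implicit step: $e$ depends on $z_h^{n+1}$ through $F_1$, and $G$ depends on $\theta_h^{n+1}$. Subtracting two solutions and testing as in the Cahn--Hilliard uniqueness proof leads to the form $(\delta G,\delta z)-(\delta e,\delta\theta)$. The symmetric part of its linearisation contains the off-diagonal block $\tfrac M2\bigl(DH(z)-DH(z_h^n)\bigr)$. For arbitrary $\tau$ and data, this block is not dominated by the diagonal terms. So neither strict monotonicity nor uniqueness is available.

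\textbf{The missing idea: deform the mobility to zero, not to $\lambda_0 I$.} Multiply the three Onsager terms by $\alpha$, or equivalently append a second stage $\beta\lambda_0 I$, $\beta\downarrow0$, to your homotopy.
\begin{enumerate}
\item The proof of Theorem~\ref{thm:discrete-structure} uses only symmetry and non-negativity of the mobility. Hence every zero still satisfies $\Avail_{a,h}^{n+1}+\alpha\tau\Diss_h^{n+1}\le\Avail_{a,h}^n$. The state bounds and the nodal barrier are therefore uniform in $\alpha$.
\item The bounds on the chemical potentials must now come from the constitutive equations via the consistent mass matrix, not from the dissipation, since $\alpha\tau\Diss_h^{n+1}$ gives no control as $\alpha\to0$.
\item At $\alpha=0$ the balance rows give $\rho_h=\rho_h^n$, $\eta_h=\eta_h^n$ and $(e(z_h^n,\theta_h)-e(z_h^n,\theta_h^n),\xi_h)=0$. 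Testing with $\xi_h=\theta_h-\theta_h^n$ and using the strict decrease of $e$ in $\theta$ forces $\theta_h=\theta_h^n$. The two chemical potentials are then fixed by linear mass-matrix systems.
\item The same row-by-row argument shows that the Jacobian at this unique zero has trivial kernel. Hence $\deg(\mathbf G(\cdot,0),W,0)=\pm1$.
\end{enumerate}
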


\begin{proof}
Choose coordinates in $\Vh^5$ and denote the resulting Euclidean space by
$X_h$.  For $\alpha\in[0,1]$, let $\mathcal F_\alpha$ be the residual of
\eqref{eq:fully-discrete} after multiplication of the three Onsager terms
in the balance equations by $\alpha$.  Since all thermal functions are
smooth on $(0,\infty)$, the map $(\alpha,U)\mapsto\mathcal F_\alpha(U)$ is
continuous, and is continuously differentiable in $U$, on
$$
 [0,1]\times\mathcal O_h,
 \qquad
 \mathcal O_h=\{U\in X_h:\theta>0\text{ on }\overline\Omega\}.
$$
Every zero of $\mathcal F_\alpha$ satisfies
\begin{equation}
 \Avail_{a,h}^{n+1}+\alpha\tau\Diss_h^{n+1}
 \le\Avail_{a,h}^n.
 \label{eq:degree-availability}
\end{equation}
The coercivity lemma gives bounds, independent of $\alpha$, on
$\int\theta_h^{n+1}$, $\int(\theta_h^{n+1})^{-q}$, and
$\|z_h^{n+1}\|_{L^4}$.  Lemma~\ref{lem:fe-positivity-barrier} and the exact
affine identity for the integral give
$$
 0<c_h\le\theta_h^{n+1}(a)\le C_h
 \qquad(a\in\mathcal N_h).
$$
Because $h$ is fixed, norm equivalence in $\Vh$ converts the $L^4$ bound
into bounds for all phase coefficients.  The two constitutive equations can
be written with the positive definite consistent mass matrix as
linear systems for the chemical-potential coefficients; their right-hand
sides are bounded by the already established state bounds.  The resulting coefficient bounds are uniform in $\alpha$.  Moreover, any
convergent sequence of zeros has a limit in $\mathcal O_h$ by the uniform
nodal lower bound, and the limit is again a zero by continuity of the
residual.  Hence the union of all zero sets,
$$
 \bigcup_{\alpha\in[0,1]}\{U\in\mathcal O_h:\mathcal F_\alpha(U)=0\},
$$
is compact and contained in a set
$\mathcal K_h\Subset\mathcal O_h$.  Choose a bounded open set
$\mathcal U_h$ such that
$\mathcal K_h\subset\mathcal U_h\Subset\mathcal O_h$.  No zero of any
$\mathcal F_\alpha$ lies on $\partial\mathcal U_h$.

At $\alpha=0$, the first and fourth equations and invertibility of the
consistent mass matrix give
$$
 \rho_h^{n+1}=\rho_h^n,
 \qquad \eta_h^{n+1}=\eta_h^n.
$$
The energy equation reduces to
$$
 (e(z_h^n,\theta_h^{n+1})-e(z_h^n,\theta_h^n),\xi_h)=0
 \qquad(\xi_h\in\Vh).
$$
Taking $\xi_h=\theta_h^{n+1}-\theta_h^n$ and using the strict decrease in
\eqref{eq:energy-monotonicity} gives
$\theta_h^{n+1}=\theta_h^n$.  The constitutive equations then determine the
two chemical potentials uniquely.  Thus $\mathcal F_0$ has exactly one zero
in $\mathcal U_h$.

It remains to check that this zero is non-degenerate.  If a vector belongs
to the kernel of $D\mathcal F_0$, the first and fourth balance rows imply
$\delta\rho=\delta\eta=0$.  The energy row then gives
$$
 \int_\Omega\partial_\theta e(z_h^n,\theta_h^n)
 |\delta\theta|^2\dx=0,
$$
and therefore $\delta\theta=0$ by
\eqref{eq:energy-monotonicity}.  The two constitutive rows, together with
positive definiteness of the mass matrix, give
$\delta\mu_\rho=\delta\mu_\eta=0$.  Hence the Jacobian is invertible and
$\deg(\mathcal F_0,\mathcal U_h,0)=\pm1$. Applying
Lemma~\ref{lem:topological-degree} with
$V=\mathcal O_h$, $W=\mathcal U_h$,
$\mathbf G(U,\alpha)=\mathcal F_\alpha(U)$, and $\mathbf b=0$
shows that $\mathcal F_1$ has a zero in $\mathcal U_h$.
\end{proof}

\begin{corollary}[Global discrete solutions]
\label{cor:global-discrete-solutions}
Starting from Lemma~\ref{lem:initial-approximation}, repeated application of
Theorem~\ref{thm:one-step-existence} produces a strictly positive fully
discrete solution on every time grid.  All estimates of
Theorem~\ref{thm:uniform-estimates} and
Proposition~\ref{prop:discrete-time-derivatives} hold uniformly in
$h$ and $\tau$.
\end{corollary}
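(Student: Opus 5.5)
The plan is an induction on the time level $n$, building the trajectory one step at a time. For the base case I take the triple $(\rho_{0,h},\theta_{0,h},\eta_{0,h})\in\Vh^3$ supplied by Lemma~\ref{lem:initial-approximation}. It satisfies $\theta_{0,h}>0$ on $\overline\Omega$, so it is an admissible old state for Definition~\ref{def:fully-discrete}.

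For the inductive step, suppose $(\rho_h^n,\theta_h^n,\eta_h^n)\in\Vh^3$ has been constructed with $\theta_h^n>0$ on $\overline\Omega$. Theorem~\ref{thm:one-step-existence} then yields a solution of \eqref{eq:fully-discrete} with $\theta_h^{n+1}>0$ on $\overline\Omega$, and this is again admissible. Since Theorem~\ref{thm:one-step-existence} gives existence but not uniqueness, at each step I fix one solution, by an arbitrary choice (finitely many choices, no axiom of choice subtleties). After $N=T/\tau$ steps this produces a strictly positive discrete trajectory on the whole grid $\{t^n\}_{n=0}^N$, for every fixed $h$ and $\tau$.

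For the uniform estimates, Theorem~\ref{thm:uniform-estimates} and Proposition~\ref{prop:discrete-time-derivatives} only require two things: that the trajectory solves the scheme, so that Theorem~\ref{thm:discrete-structure} and hence \eqref{eq:availability-dissipation} apply at every step, and that the initial availability $\Avail_{a,h}^0$ is bounded uniformly in $h$. The first holds by construction. For the second I use the exact representation
\[
 \Avail_{a,h}^0=\int_\Omega\bigl[a e(z_{0,h},\theta_{0,h})-\Sbulk(z_{0,h},\theta_{0,h})\bigr]\dx+\frac{\gamma_\rho}{2}\norm{\nabla\rho_{0,h}}_{L^2}^2+\frac{\gamma_\eta}{2}\norm{\nabla\eta_{0,h}}_{L^2}^2 .
\]
The gradient terms converge by \eqref{eq:initial-H1-convergence}. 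The bulk integrals converge by \eqref{eq:initial-energy-convergence} and \eqref{eq:initial-entropy-convergence} to $\int_\Omega[a e(z_0,\theta_0)-\Sbulk(z_0,\theta_0)]\dx$. This limit is finite because of \eqref{eq:e-s-growth} together with \eqref{eq:initial-data} and the embedding $H^1\hookrightarrow L^4$ for $d\le3$. A convergent sequence is bounded, so $\sup_h\Avail_{a,h}^0<\infty$. The estimate does not involve $\tau$ at all.

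Telescoping \eqref{eq:availability-dissipation} then gives $\Avail_{a,h}^n+\tau\sum_{k\le n}\Diss_h^k\le\Avail_{a,h}^0\le C$, which is exactly the input for the proofs of Theorem~\ref{thm:uniform-estimates} and Proposition~\ref{prop:discrete-time-derivatives}. Their constants depend only on this bound and on the constitutive data. The only delicate point is that the positivity constant $c_h$ from Theorem~\ref{thm:one-step-existence} depends on the mesh. That constant is used solely for existence and never enters the uniform bounds, so no mesh-uniform lower bound on $\theta_h^n$ is needed; this is consistent with Corollary~\ref{cor:tails}.
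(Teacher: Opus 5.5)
Your proposal is correct and is the argument the paper leaves implicit: induction on $n$ via Theorem~\ref{thm:one-step-existence} starting from Lemma~\ref{lem:initial-approximation}, with the uniform estimates following from \eqref{eq:availability-dissipation} once the initial availability is bounded, which you correctly deduce from \eqref{eq:initial-H1-convergence}--\eqref{eq:initial-entropy-convergence}. Your observation that the mesh-dependent barrier constant $c_h$ never enters the uniform bounds also matches the remark that follows the corollary in the paper.
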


\begin{remark}
The positivity constant in Lemma~\ref{lem:fe-positivity-barrier} depends on
the fixed mesh and is used only in the finite-dimensional degree argument.
The continuum limit uses solely the mesh-independent integral estimates.
The degree argument proves existence, but not uniqueness, of the nonlinear
algebraic solution at a time step.
\end{remark}

\subsection{Temporal interpolants and phase compactness}
\label{sec:phase-compactness}

\begin{proposition}[Strong compactness of the phase variables]
\label{prop:phase-compactness}
Along a subsequence,
\begin{align}
 \rho_{h,\tau}^{\pm}\to\rho,\quad
 \eta_{h,\tau}^{\pm}\to\eta
 &\quad\text{strongly in }L^2(\QT),
 \label{eq:phase-L2-compactness}\\
 z_{h,\tau}^{\pm}\to z
 &\quad\text{strongly in }L^p(\QT)
 \quad \text{for every }1\le p<6.
 \label{eq:phase-Lp-compactness}
\end{align}
Moreover, $\rho,\eta\in L^\infty(0,T;H^1(\Omega))$.
\end{proposition}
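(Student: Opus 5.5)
The argument applies Lemma~\ref{lem:discrete-aubin-lions} separately to $\{\rho_h^n\}$ and $\{\eta_h^n\}$, then upgrades the resulting $L^2$ convergence to $L^p$, $p<6$, by interpolation with the uniform $L^\infty(0,T;H^1)$ bound.

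\textbf{Verifying the hypotheses.} By Theorem~\ref{thm:uniform-estimates}, $\sup_n\norm{\rho_h^n}_{H^1}^2+\sup_n\norm{\eta_h^n}_{H^1}^2\le C$, so $\tau\sum_{n=0}^N\norm{\rho_h^n}_{H^1}^2\le C(T+\tau)$, and likewise for $\eta$. This uses Corollary~\ref{cor:global-discrete-solutions} and the initial approximation of Lemma~\ref{lem:initial-approximation}, whose availability is uniformly bounded by \eqref{eq:initial-H1-convergence}--\eqref{eq:initial-entropy-convergence}. For the time increments, \eqref{eq:dt-rho} gives the discrete dual bound directly. For $\eta$ we have \eqref{eq:dt-eta} in $L^2$, and since $\norm{f_h}_{-1,h}\le\norm{f_h}_{L^2}$ by Cauchy--Schwarz, we get $\tau\sum\norm{\deltat\eta_h^{n+1}}_{-1,h}^2\le C$. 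Lemma~\ref{lem:discrete-aubin-lions} then yields a common subsequence (extracted successively for $\rho$ and then $\eta$) along which the forward, backward and affine interpolants all converge strongly in $L^2(\QT)$ to the same limits $\rho$ and $\eta$. This proves \eqref{eq:phase-L2-compactness}.

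\textbf{Regularity of the limit.} The interpolants $\rho_{h,\tau}^\pm$ and $\eta_{h,\tau}^\pm$ are bounded in $L^\infty(0,T;H^1(\Omega))$, which is the dual of the separable space $L^1(0,T;H^{-1})$. Banach--Alaoglu therefore gives a further subsequence converging weakly-star in $L^\infty(0,T;H^1)$. Because the $L^2(\QT)$ limit is unique, the weak-star limit coincides with $\rho$ (resp.\ $\eta$), so $\rho,\eta\in L^\infty(0,T;H^1(\Omega))$ and the norm bound passes to the limit by weak-star lower semicontinuity.

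\textbf{Upgrading to $L^p$.} For $d\le3$ we have $H^1\hookrightarrow L^6$, so $z_{h,\tau}^\pm$ is bounded in $L^\infty(0,T;L^6)\subset L^6(\QT)$, and so is the limit $z$. For $2<p<6$, Hölder interpolation gives
\[
\norm{z_{h,\tau}^\pm-z}_{L^p(\QT)}\le\norm{z_{h,\tau}^\pm-z}_{L^2(\QT)}^{1-\vartheta}\norm{z_{h,\tau}^\pm-z}_{L^6(\QT)}^{\vartheta},
\qquad \frac1p=\frac{1-\vartheta}{2}+\frac{\vartheta}{6},\quad \vartheta<1.
\]
The second factor is bounded and the first tends to zero, so the left side tends to zero. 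For $p\le2$ the claim follows from the $L^2$ convergence, since $\QT$ has finite measure. This proves \eqref{eq:phase-Lp-compactness}.

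The only step that needs care is matching the assumptions of Lemma~\ref{lem:discrete-aubin-lions}: for $\eta$, the time-derivative bound has to be transferred from $L^2$ to the discrete dual norm $\norm{\cdot}_{-1,h}$, and the $n=N$ endpoint term in the $H^1$ sum has to be covered. Both are immediate from the sup-in-time bounds above, so I do not expect a genuine obstacle.
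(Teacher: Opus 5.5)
Your proposal is correct and follows the same route as the paper: you apply the discrete Aubin--Lions lemma (Lemma~\ref{lem:discrete-aubin-lions}) to $\rho$ via \eqref{eq:dt-rho} and to $\eta$ via the stronger $L^2$ bound \eqref{eq:dt-eta}, then interpolate against the uniform $L^\infty(0,T;L^6)$ bound. You also make explicit details the paper leaves implicit, namely $\norm{\cdot}_{-1,h}\le\norm{\cdot}_{L^2}$ on $\Vh$, the endpoint term in the $H^1$ sum, and the weak-star argument giving $\rho,\eta\in L^\infty(0,T;H^1(\Omega))$.
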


\begin{proof}
Apply Lemma~\ref{lem:discrete-aubin-lions} to $\rho_{h,\tau}$ using
\eqref{eq:dt-rho}.  For $\eta$, estimate \eqref{eq:dt-eta} is stronger.
Interpolation between strong $L^2$ convergence and the uniform
$L^\infty(0,T;L^6)$ bound gives \eqref{eq:phase-Lp-compactness}.
\end{proof}

\begin{lemma}[Strong compactness of the phase energy]
\label{lem:phase-energy-strong}
For both time interpolants,
\begin{equation}
 F_1(z_{h,\tau}^{\pm})\to F_1(z)
 \quad\text{strongly in }L^2(0,T;L^{6/5}(\Omega)).
 \label{eq:phase-energy-mixed-strong}
\end{equation}
Consequently,
\begin{equation}
 \lim_{s\downarrow0}\limsup_{h,\tau\to0}
 \norm{F_1(z_{h,\tau})(\cdot+s)-F_1(z_{h,\tau})}_
 {L^2(0,T-s;L^{6/5})}=0.
 \label{eq:phase-energy-translations}
\end{equation}
\end{lemma}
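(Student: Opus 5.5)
The plan is to combine the strong $L^p$ convergence of the phase variables from Proposition~\ref{prop:phase-compactness} with the quartic growth of $F_1$ and a uniform higher-integrability bound, and to close with a Vitali-type (equi-integrability) argument in time. Since $F_1$ has degree at most four, \eqref{eq:phase-growth} gives the pointwise estimate
\begin{equation*}
 |F_1(z_1)-F_1(z_2)|\le C\bigl(1+|z_1|^3+|z_2|^3\bigr)|z_1-z_2|.
\end{equation*}
For fixed $t$, H\"older's inequality with exponents $\tfrac{6}{5}=\bigl(\tfrac{1}{2}+\tfrac13\bigr)^{-1}$ splits the right-hand side into $L^2\times L^3$. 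Taking $|z_1|^3\in L^2$ (which needs $z\in L^6$) and $|z_1-z_2|\in L^3$, we get
\begin{equation*}
 \norm{F_1(z_{h,\tau}^\pm)-F_1(z)}_{L^{6/5}}
 \le C\bigl(1+\norm{z_{h,\tau}^\pm}_{L^6}^3+\norm{z}_{L^6}^3\bigr)\norm{z_{h,\tau}^\pm-z}_{L^3}.
\end{equation*}

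By Theorem~\ref{thm:uniform-estimates} and $H^1\hookrightarrow L^6$ for $d\le3$, the prefactor is bounded uniformly in $t,h,\tau$. The same holds for the limit $z\in L^\infty(0,T;H^1)$, by weak-star lower semicontinuity. Squaring and integrating in time therefore gives
\begin{equation*}
 \norm{F_1(z_{h,\tau}^\pm)-F_1(z)}_{L^2(0,T;L^{6/5})}^2\le C\norm{z_{h,\tau}^\pm-z}_{L^2(0,T;L^3)}^2 ,
\end{equation*}
and the right-hand side tends to zero by \eqref{eq:phase-Lp-compactness} with $p=3<6$, since $L^3(\QT)\hookrightarrow L^2(0,T;L^3)$ on a finite interval. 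This proves \eqref{eq:phase-energy-mixed-strong} for both $\pm$ interpolants. No equi-integrability argument is actually required, because the $L^\infty_t$ bound on the $L^6$-norm absorbs the cubic weight. The main point to check is that the exponent bookkeeping fits exactly at $6/5$, and it does: $\tfrac{5}{6}=\tfrac12+\tfrac13$.

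For the translation estimate \eqref{eq:phase-energy-translations}, write $g_j=F_1(z_{h,\tau})$ and $g=F_1(z)$, and use the triangle inequality:
\begin{equation*}
 \norm{g_j(\cdot+s)-g_j}\le 2\norm{g_j-g}_{L^2(0,T;L^{6/5})}+\norm{g(\cdot+s)-g}_{L^2(0,T-s;L^{6/5})}.
\end{equation*}
The first term vanishes in the $\limsup$ by the previous step. The second term tends to zero as $s\downarrow0$ by continuity of translations in the Bochner space $L^2(0,T;L^{6/5})$, which holds since $g$ is a fixed element of that space (density of continuous functions). Here $F_1(z_{h,\tau})$ is read as the forward interpolant, per the paper's convention. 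The only subtlety is that the convergence is along the extracted subsequence, so the $\limsup$ should be understood along that subsequence.
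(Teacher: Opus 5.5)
Your proof is correct and follows essentially the paper's route: the cubic Lipschitz bound for $F_1$, H\"older with the cubic factor bounded in $L^\infty(0,T;L^2)$, strong convergence of the phase differences in an $L^3$-in-space norm, and a three-term argument using translation continuity of the fixed limit $F_1(z)$. The only difference is cosmetic. The paper interpolates to obtain the differences in $L^4(0,T;L^3)$, which gives the slightly stronger $L^4(0,T;L^{6/5})$ convergence. You instead use the $L^3(\QT)$ convergence from Proposition~\ref{prop:phase-compactness} directly, which is enough for the stated $L^2(0,T;L^{6/5})$ claim.
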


\begin{proof}
The phase differences converge strongly in
$L^4(0,T;L^3(\Omega))$ by interpolation between strong
$L^2(0,T;L^2)$ convergence and the uniform
$L^\infty(0,T;L^6)$ bound.  Since
$$
 |F_1(z_1)-F_1(z_2)|
 \le C(1+|z_1|^3+|z_2|^3)|z_1-z_2|
$$
and the cubic factor is bounded in $L^\infty(0,T;L^2)$, the convergence
holds even in $L^4(0,T;L^{6/5})$, hence in the space stated.  Translation
continuity of the limit and a three-term argument prove
\eqref{eq:phase-energy-translations}.
\end{proof}

\subsection{Strong compactness of the inverse temperature}
\label{sec:temperature-compactness}

Write
\begin{equation}
 e(z,\theta)=\beta(\theta)+F_1(z),
 \qquad
 \beta(\theta)=\theta^{-1}-\eps_1\ln\theta+\eps_q\theta^{-q}.
 \label{eq:separable-energy}
\end{equation}
The tuned logarithmic term supplies a global monotonicity estimate that is
particularly well suited to the consistent Galerkin scheme.

\begin{lemma}[Time translations of the internal energy]
\label{lem:energy-time-translations}
Let $e_{h,\tau}$ be the forward interpolant and
$\widetilde e_{h,\tau}$ its affine interpolant, both understood as
functionals on $\Vh$.  Then
\begin{equation}
 \norm{\partial_t\widetilde e_{h,\tau}}_{L^2(0,T;-1,h)}\le C
 \label{eq:energy-affine-bound}
\end{equation}
and, for $0<s<T$,
\begin{equation}
 \norm{e_{h,\tau}(\cdot+s)-e_{h,\tau}}_
 {L^2(0,T-s;-1,h)}\le C(s+\tau).
 \label{eq:energy-translation-bound}
\end{equation}
\end{lemma}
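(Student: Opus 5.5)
The plan is to derive both bounds directly from the discrete energy equation \eqref{eq:fd-energy} and the dissipation bound in \eqref{eq:uniform-estimates}. On each interval $(t^n,t^{n+1})$ the affine interpolant satisfies $\partial_t\widetilde e_{h,\tau}=\deltat e_h^{n+1}$. Here $\deltat e_h^{n+1}$ is read as the functional $v_h\mapsto(\deltat e_h^{n+1},v_h)$ on $\Vh$, which is well defined by the exact-integration convention and the fixed-mesh boundedness of test functions. By \eqref{eq:fd-energy}, for every $\xi_h\in\Vh$,
\begin{equation*}
 |(\deltat e_h^{n+1},\xi_h)|
 \le\lambda_1\bigl(\norm{\nabla\mu_{\rho,h}^{n+1}}_{L^2}
 +\norm{\nabla\theta_h^{n+1}}_{L^2}
 +\norm{\mu_{\eta,h}^{n+1}}_{L^2}\bigr)\norm{\xi_h}_{H^1},
\end{equation*}
using the bound $\lambda_1$ on $\LL_h^n$ from Assumption~\ref{ass:Onsager}. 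Squaring, multiplying by $\tau$ and summing, then invoking \eqref{eq:uniform-estimates} (this is \eqref{eq:dt-e}), gives $\norm{\partial_t\widetilde e_{h,\tau}}_{L^2(0,T;-1,h)}^2=\tau\sum_n\norm{\deltat e_h^{n+1}}_{-1,h}^2\le C$. This proves \eqref{eq:energy-affine-bound}.

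For \eqref{eq:energy-translation-bound}, I split the forward interpolant as $e_{h,\tau}=\widetilde e_{h,\tau}+(e_{h,\tau}-\widetilde e_{h,\tau})$. On $(t^n,t^{n+1}]$ one has $e_{h,\tau}-\widetilde e_{h,\tau}=\frac{t^{n+1}-t}{\tau}(e_h^{n+1}-e_h^n)$, and this is pointwise in time bounded in $\norm{\cdot}_{-1,h}$ by $\tau\norm{\deltat e_h^{n+1}}_{-1,h}$. Its $L^2(0,T;-1,h)$ norm is therefore at most $\tau\bigl(\tau\sum_n\norm{\deltat e_h^{n+1}}_{-1,h}^2\bigr)^{1/2}\le C\tau$. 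The shifted copy on $(0,T-s)$ obeys the same bound. For the affine part, $\widetilde e_{h,\tau}$ is Lipschitz in time with values in the finite-dimensional dual of $\Vh$, so
\begin{equation*}
 \widetilde e_{h,\tau}(t+s)-\widetilde e_{h,\tau}(t)=\int_t^{t+s}\partial_t\widetilde e_{h,\tau}(r)\,\mathrm dr .
\end{equation*}
By Minkowski's inequality, $\norm{\widetilde e_{h,\tau}(\cdot+s)-\widetilde e_{h,\tau}}_{-1,h}\le\int_0^s\norm{\partial_t\widetilde e_{h,\tau}(\cdot+r)}_{-1,h}\,\mathrm dr$. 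Taking the $L^2(0,T-s)$ norm and using Minkowski's integral inequality gives the bound $s\norm{\partial_t\widetilde e_{h,\tau}}_{L^2(0,T;-1,h)}\le Cs$. Adding the three contributions yields $C(s+\tau)$.

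The only delicate point is ensuring that the dual norm is applied to a well-defined object. The function $e_h^n$ is not in $\Vh$ and is only in $L^1$ uniformly, so every step must be phrased through pairings with $\Vh$. Minkowski's integral inequality must then be applied to the real-valued map $t\mapsto\norm{\cdot}_{-1,h}$, which is a seminorm on a finite-dimensional space. All of this is legitimate at fixed $h$, and the constants come only from \eqref{eq:dt-e}, so they are uniform in $h$ and $\tau$.
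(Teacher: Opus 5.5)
Your proof is correct and follows essentially the same route as the paper: the affine bound is \eqref{eq:dt-e}, the affine interpolant satisfies the $Cs$ translation bound by the fundamental theorem of calculus, the cellwise difference $e_{h,\tau}-\widetilde e_{h,\tau}$ is $O(\tau)$ in $L^2(0,T;-1,h)$, and the triangle inequality concludes. You only add the details the paper leaves implicit, namely the explicit formula $\frac{t^{n+1}-t}{\tau}(e_h^{n+1}-e_h^n)$ for the cellwise difference and the Minkowski integral inequality step.
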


\begin{proof}
The first estimate is \eqref{eq:dt-e}.  The affine interpolant satisfies the
usual translation estimate $Cs$ by the fundamental theorem of calculus.
On each time cell, direct integration gives
$$
 \norm{e_{h,\tau}-\widetilde e_{h,\tau}}_{L^2(0,T;-1,h)}
 \le C\tau.
$$
The triangle inequality proves \eqref{eq:energy-translation-bound}.
\end{proof}

\begin{theorem}[Temperature compactness]
\label{thm:temperature-compactness}
Along a subsequence there is
$\theta\in L^2(0,T;H^1(\Omega))$, with $\theta>0$ almost everywhere, such
that
\begin{equation}
 \theta_{h,\tau}^{\pm}\to\theta
 \qquad\text{strongly in }L^2(\QT).
 \label{eq:theta-strong}
\end{equation}
\end{theorem}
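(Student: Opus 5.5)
Combining the spatial $H^1$ bound on $\theta_{h,\tau}$ with the time-translation control on $e_{h,\tau}$ and the strong monotonicity of $\beta$ from Lemma~\ref{lem:log-monotonicity}, I will prove that $\theta_{h,\tau}$ is relatively compact in $L^2(\QT)$. I use the Kolmogorov--Riesz--Fréchet criterion. Spatial translations are controlled uniformly by $\tau\sum\norm{\nabla\theta_h^{n+1}}_{L^2}^2\le C$ from Theorem~\ref{thm:uniform-estimates}. What remains is to show
$$\lim_{s\downarrow0}\limsup_{h,\tau\to0}\norm{\theta_{h,\tau}(\cdot+s)-\theta_{h,\tau}}_{L^2((0,T-s)\times\Omega)}=0.$$

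\textbf{Time translations in the square-root variable.} Write $\theta_s=\theta_{h,\tau}(\cdot+s)$. By Lemma~\ref{lem:log-monotonicity} and \eqref{eq:separable-energy},
$$4\eps_1\int\!\!\int(\sqrt{\theta_s}-\sqrt{\theta})^2\le-\int\!\!\int(\beta(\theta_s)-\beta(\theta))(\theta_s-\theta)=-\int_0^{T-s}\bigl(e_s-e,\theta_s-\theta\bigr)\dtm+\int_0^{T-s}\bigl(F_1(z_s)-F_1(z),\theta_s-\theta\bigr)\dtm.$$
Since $\theta_s-\theta\in\Vh$, the first term is bounded by
$$\norm{e_s-e}_{L^2(-1,h)}\,\norm{\theta_s-\theta}_{L^2(H^1)}\le C(s+\tau)$$
by Lemma~\ref{lem:energy-time-translations}. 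The second term is at most
$$\norm{F_1(z_s)-F_1(z)}_{L^2(L^{6/5})}\,\norm{\theta_s-\theta}_{L^2(L^6)}.$$
Its limsup vanishes as $s\downarrow0$ by \eqref{eq:phase-energy-translations} and the $L^2(H^1)\hookrightarrow L^2(L^6)$ bound. Hence $\sqrt{\theta_{h,\tau}}$ has uniformly small time translations in $L^2$.

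\textbf{Upgrade to $\theta$ itself.} I split the domain at level $R$. On $\{\theta_s\le R,\ \theta\le R\}$ we have $|\theta_s-\theta|\le2\sqrt R\,|\sqrt{\theta_s}-\sqrt\theta|$, so the $L^1$ translation of $\theta$ on this set is bounded by $C\sqrt R\,\norm{\sqrt{\theta_s}-\sqrt\theta}_{L^2}$. On the complement I use the tail estimate of Corollary~\ref{cor:tails} together with the uniform bound in $L^2(0,T;L^6)$. By Hölder, $\int\!\!\int_{\{\theta>R\}}\theta\le \norm{\theta}_{L^2(L^2)}|\{\theta>R\}|^{1/2}\le CR^{-1/2}$. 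This gives translation smallness in $L^1(\QT)$. Interpolating between $L^1$ and the uniform $L^{2}(0,T;L^6)$ bound gives it in $L^p$ for $p<2$. For $L^2$ itself I use $L^{10/3}(\QT)$ integrability, which follows from $L^\infty(L^1)\cap L^2(H^1)$ by Gagliardo--Nirenberg. Kolmogorov's theorem then yields strong compactness in $L^2(\QT)$ of the forward interpolant. The backward interpolant is a shift by $\tau$, so it has the same limit, since the above bound with $s=\tau$ gives translation norms $o(1)$. The limit lies in $L^2(0,T;H^1)$ by weak lower semicontinuity.

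\textbf{Positivity, and the main obstacle.} To see $\theta>0$ a.e., I use Fatou's lemma: along an a.e.\ convergent subsequence, $\int\!\!\int\theta^{-q}\le\liminf\int\!\!\int\theta_{h,\tau}^{-q}\le C$, so $\theta>0$ a.e. The main obstacle I expect is the time-translation step. The $-1,h$ duality is only available against discrete test functions, and one must check that $\theta_s-\theta$ is admissible; it is, because both are piecewise constant in time with values in $\Vh$, the boundary cells are handled by the $\tau$ term, and the exact integration of $e$ matters here. A second delicate point is passing from $\sqrt\theta$ to $\theta$ without any uniform upper bound. If the truncation argument above proved insufficient, I would instead invoke Lemma~\ref{lem:poincare-subset} on the set $\{\kappa\le\theta\le R\}$, whose measure is bounded below by Corollary~\ref{cor:tails}, to recover full $L^2$ control from gradient bounds.
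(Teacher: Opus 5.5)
Your argument is correct. Its core is the same as the paper's: the time-translation estimate for $\sqrt{\theta_{h,\tau}}$ via Lemma~\ref{lem:log-monotonicity}, the discrete dual bound of Lemma~\ref{lem:energy-time-translations} together with the $F_1$ translations, and Fatou's lemma for positivity.

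The difference lies in how you pass from $\sqrt\theta$ to $\theta$. The paper applies Fr\'echet--Kolmogorov to the square roots, where boundedness and the time-end strips need only the $L^\infty(0,T;L^1)$ bound. It then identifies the limit as $\theta=r^2$ through convergence in measure. Finally it upgrades to $L^2$ by Vitali, proving uniform integrability of $\theta_{h,\tau}^2$ with $(\theta_h-R)^+$, Lemma~\ref{lem:poincare-subset} and Sobolev embedding; the backward interpolant gets a separate consecutive-state estimate. You instead turn the square-root translation bound into an $L^1$ translation bound for $\theta$ by truncating at level $R$. You lift this to $L^2$ by interpolating against higher integrability and apply the compactness criterion to $\theta$ directly, so the backward interpolant becomes the special case $s=\tau$. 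Your route avoids the subset Poincar\'e lemma and the identification step. The paper's route runs the compactness argument in a variable controlled by $L^1$ bounds alone.

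A few details need fixing; none is fatal.
\begin{itemize}
\item The claimed $L^{10/3}(\QT)$ bound is not available. In three dimensions that exponent comes from $L^\infty(0,T;L^2)\cap L^2(0,T;H^1)$, and you only have $L^\infty(0,T;L^1)$. H\"older's inequality $\norm{u}_{L^{8/3}}\le\norm{u}_{L^1}^{1/4}\norm{u}_{L^6}^{3/4}$ instead gives a uniform $L^{8/3}(\QT)$ bound for all $d\le3$. Any exponent above $2$ suffices for your interpolation.
\item The Kolmogorov criterion on a bounded time interval also needs control of the time-end strips, which you did not address. The $L^{8/3}$ bound handles them; alternatively, use Simon's criterion.
\item On the complement of $\{\theta_s\le R,\ \theta\le R\}$ you also need the cross terms such as $\int\!\!\int_{\{\theta>R\}}\theta_s$. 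The same H\"older argument bounds them by $CR^{-1/2}$.
\item For $s=\tau$, the $F_1$ term vanishes because $F_1(z_{h,\tau}^+)-F_1(z_{h,\tau}^-)\to0$ in $L^2(0,T;L^{6/5})$ by Lemma~\ref{lem:phase-energy-strong}. It does not follow from the iterated limit \eqref{eq:phase-energy-translations}, which says nothing about a translation tied to $\tau$.
\end{itemize}
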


\begin{proof}
The uniform estimates give weak convergence of the forward temperatures in
$L^2(0,T;H^1)$.  Fix $0<s<T$ and put
$a=\theta_{h,\tau}(t+s)$ and $b=\theta_{h,\tau}(t)$.  Their difference
belongs to $\Vh$.  By Lemma~\ref{lem:log-monotonicity} and
$e=\beta+F_1$,
\begin{align}
 4\eps_1\norm{\sqrt a-\sqrt b}_{L^2((0,T-s)\times\Omega)}^2
 \le{}&
 \left|\int_0^{T-s}(e_{h,\tau}(t+s)-e_{h,\tau}(t),a-b)\dtm\right|
 \nonumber\\
 &+\left|\int_0^{T-s}
 (F_1(z_{h,\tau}(t+s))-F_1(z_{h,\tau}(t)),a-b)\dtm\right|.
 \label{eq:sqrt-time-translation}
\end{align}
The first term is bounded by
$C(s+\tau)\norm{a-b}_{L^2(0,T-s;H^1)}$ using
Lemma~\ref{lem:energy-time-translations}.  The second is bounded by
$$
 \norm{F_1(z_{h,\tau})(\cdot+s)-F_1(z_{h,\tau})}_
 {L^2(0,T-s;L^{6/5})}
 \norm{a-b}_{L^2(0,T-s;L^6)}.
$$
The second factor is uniformly bounded and the first tends to zero in the
iterated limit by Lemma~\ref{lem:phase-energy-strong}.  Hence
\begin{equation}
 \lim_{s\downarrow0}\limsup_{h,\tau\to0}
 \norm{\sqrt{\theta_{h,\tau}(\cdot+s)}-
       \sqrt{\theta_{h,\tau}}}_{L^2((0,T-s)\times\Omega)}=0.
 \label{eq:sqrt-time-compactness}
\end{equation}

For spatial translations $y$ on the torus,
\begin{align*}
 &\norm{\sqrt{\theta_{h,\tau}(\cdot,\cdot+y)}-
       \sqrt{\theta_{h,\tau}}}_{L^2(\QT)}^2\le
 \norm{\theta_{h,\tau}(\cdot,\cdot+y)-\theta_{h,\tau}}_{L^1(\QT)}
 \le C|y|,
\end{align*}
where the last estimate follows from the $L^2(0,T;H^1)$ bound.  Since
$\int\theta_{h,\tau}$ is uniformly bounded, the square roots are bounded in
$L^2(\QT)$; the time-end strips have squared $L^2$ norm at most $Cs$ by the
uniform-in-time $L^1$ bound for $\theta_{h,\tau}$.  The
Fr\'echet--Kolmogorov criterion and \eqref{eq:sqrt-time-compactness} yield
$$
 \sqrt{\theta_{h,\tau}}\to r
 \quad\text{strongly in }L^2(\QT).
$$
Therefore $\theta_{h,\tau}\to r^2$ strongly in $L^1$ and in measure.  The
weak $L^2H^1$ limit identifies $\theta=r^2$.  Fatou's lemma and the
$\theta_h^{-q}$ estimate show
$$
 \int_{\QT}\theta^{-q}\dx\dtm<\infty,
$$
so $\theta>0$ almost everywhere.

It remains to upgrade to strong $L^2$.  For almost every $t$, the uniform
$L^1$ bound gives $|\{\theta_h(t)>R\}|\le C/R$.  For $R$ large enough the
set $E_h(t)=\{\theta_h(t)\le R\}$ has measure at least $|\Omega|/2$.
With $w_{h,R}=(\theta_h-R)^+$, Lemma~\ref{lem:poincare-subset} and Sobolev
embedding give
$$
 \norm{w_{h,R}(t)}_{L^{p_d}}
 \le C\norm{\nabla\theta_h(t)}_{L^2},
 \qquad
 p_d=\begin{cases}4,&d\le2,\\6,&d=3.\end{cases}
$$
Set $A_{h,R}(t)=\{\theta_h(t)>2R\}$.  On this set
$\theta_h\le2w_{h,R}$, while the $L^1$ bound gives
$|A_{h,R}(t)|\le C/R$.  H\"older's inequality therefore yields
\begin{align*}
 \int_{A_{h,R}(t)}\theta_h(t)^2\dx
 &\le4\norm{w_{h,R}(t)}_{L^{p_d}}^2
       |A_{h,R}(t)|^{1-2/p_d}\le C R^{-\alpha_d}\norm{\nabla\theta_h(t)}_{L^2}^2,
 \qquad \alpha_d=1-\frac2{p_d}>0.
\end{align*}
After integration in time the right-hand side tends to zero uniformly as
$R\to\infty$.  Hence the family $\{\theta_{h,\tau}^2\}$ is uniformly
integrable.  Since $\theta_{h,\tau}\to\theta$ in measure and
$\theta\in L^2(\QT)$, Vitali's theorem gives strong $L^2$ convergence of
the forward interpolants.

To identify the backward interpolants, apply
Lemma~\ref{lem:log-monotonicity} directly to consecutive states.  With
$\Delta\theta_h^{n+1}=\theta_h^{n+1}-\theta_h^n$, multiplication by $\tau$
and summation give
\begin{align*}
 4\eps_1\Big\|\sqrt{\theta_{h,\tau}^+}
       -\sqrt{\theta_{h,\tau}^-}\Big\|_{L^2(\QT)}^2
 \le{}&\tau\sum_n
 |(e_h^{n+1}-e_h^n,\Delta\theta_h^{n+1})|+\tau\sum_n
 |(F_1(z_h^{n+1})-F_1(z_h^n),\Delta\theta_h^{n+1})|.
\end{align*}
For the first sum, the identity
$e_h^{n+1}-e_h^n=\tau\deltat e_h^{n+1}$ and Cauchy--Schwarz give
\begin{align*}
 &\tau\sum_n
 |(e_h^{n+1}-e_h^n,\Delta\theta_h^{n+1})|\le
 \tau\bigg(\tau\sum_n\norm{\deltat e_h^{n+1}}_{-1,h}^2\bigg)^{1/2}
 \bigg(\tau\sum_n\norm{\Delta\theta_h^{n+1}}_{H^1}^2\bigg)^{1/2}
 \le C\tau,
\end{align*}
where the last factor is bounded by the forward and backward
$L^2(0,T;H^1)$ estimates.  The second sum tends to zero because
$F_1(z_{h,\tau}^+)-F_1(z_{h,\tau}^-)\to0$ strongly in
$L^2(0,T;L^{6/5})$, whereas
$\theta_{h,\tau}^+-\theta_{h,\tau}^-$ is bounded in
$L^2(0,T;L^6)$.  Thus the two square-root interpolants have the same
$L^2$ limit.  It follows that
$\theta_{h,\tau}^+-\theta_{h,\tau}^-\to0$ in $L^1$ and hence in measure.
The tail estimate above applies equally to the backward states, so their
squares are uniformly integrable.  Vitali's theorem finally yields the
strong $L^2$ convergence of $\theta_{h,\tau}^-$.
\end{proof}

\subsection{Thermal nonlinearities and the internal-energy defect}
\label{sec:defect}

Only the highest inverse power can fail to be uniformly integrable.  We
therefore split
\begin{equation}
 e_\sing(\theta)=\eps_q\theta^{-q},
 \qquad
 e_\reg(z,\theta)=\theta^{-1}-\eps_1\ln\theta+F_1(z).
 \label{eq:energy-split}
\end{equation}

\begin{lemma}[Strong convergence of all lower-order thermal terms]
\label{lem:thermal-transforms}
For every $1\le r<q$,
\begin{equation}
 \theta_{h,\tau}^{-1}\to\theta^{-1}
 \quad\text{strongly in }L^r(\QT).
 \label{eq:reciprocal-strong}
\end{equation}
Moreover,
\begin{align}
 (\ln\theta_{h,\tau})^+&\to(\ln\theta)^+
 &&\text{strongly in }L^2(\QT),
 \label{eq:positive-log-strong}\\
 (-\ln\theta_{h,\tau})^+&\to(-\ln\theta)^+
 &&\text{strongly in }L^1(\QT),
 \label{eq:negative-log-strong}\\
 \theta_{h,\tau}^{1-q}&\to\theta^{1-q}
 &&\text{strongly in }L^1(\QT).
 \label{eq:entropy-power-strong}
\end{align}
Consequently,
\begin{align}
 e_\reg(z_{h,\tau},\theta_{h,\tau})&\to e_\reg(z,\theta),
 \label{eq:regular-energy-strong}\\
 \Sbulk(z_{h,\tau},\theta_{h,\tau})&\to\Sbulk(z,\theta)
 \label{eq:bulk-entropy-strong}
\end{align}
strongly in $L^1(\QT)$, and the same conclusions hold for the backward
interpolants.
\end{lemma}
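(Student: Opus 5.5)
The common mechanism is Vitali's theorem. By Theorem~\ref{thm:temperature-compactness}, $\theta_{h,\tau}\to\theta$ strongly in $L^2(\QT)$, and along a further subsequence almost everywhere in $\QT$, with $\theta>0$ a.e. Each thermal transform appearing in the statement is continuous on $(0,\infty)$, so it converges almost everywhere to the corresponding transform of $\theta$. It therefore suffices to show that the relevant powers are uniformly integrable. The available tools are the uniform bound $\sup_n\int_\Omega(\theta_h^n)^{-q}\dx\le C$ from Theorem~\ref{thm:uniform-estimates}, which after time integration bounds $\theta_{h,\tau}^{-1}$ in $L^q(\QT)$, and the strong $L^2$ convergence of $\theta_{h,\tau}$ itself.

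For \eqref{eq:reciprocal-strong}, fix $r<q$. The family $|\theta_{h,\tau}^{-1}|^r$ is bounded in $L^{q/r}$ with $q/r>1$, hence uniformly integrable by de la Vallée-Poussin, and Vitali gives convergence in $L^r$. The other terms follow the same pattern:
\begin{itemize}
\item For \eqref{eq:entropy-power-strong}, $\theta^{1-q}=(\theta^{-1})^{q-1}$ is bounded in $L^{q/(q-1)}$ with $q/(q-1)>1$, so it is uniformly integrable.
\item For \eqref{eq:negative-log-strong}, use $[(-\ln s)^+]^2\le C(1+s^{-q})$ to bound the family in $L^2$.
\item For \eqref{eq:positive-log-strong}, the map $s\mapsto(\ln s)^+$ is 1-Lipschitz, so $\|(\ln\theta_{h,\tau})^+-(\ln\theta)^+\|_{L^2}\le\|\theta_{h,\tau}-\theta\|_{L^2}\to0$ directly.
\end{itemize}

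Next, $e_\reg=\theta^{-1}-\eps_1\ln\theta+F_1(z)$, and $F_1(z_{h,\tau})\to F_1(z)$ strongly in $L^1$ by Lemma~\ref{lem:phase-energy-strong}, since $L^2(0,T;L^{6/5})\hookrightarrow L^1$. The term $\theta^{-1}$ is covered by the case $r=1<q$, and the logarithm by its positive and negative parts. This proves \eqref{eq:regular-energy-strong}. For \eqref{eq:bulk-entropy-strong}, formula \eqref{eq:entropy-density} writes $\Sbulk$ as a constant plus $-\ln\theta$, $-\eps_1\theta$ (strong in $L^2$), $\tfrac{q}{q-1}\eps_q\theta^{1-q}$ and $F_0(z)$. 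The term $F_0(z_{h,\tau})$ converges in $L^1$ by the same argument as in Lemma~\ref{lem:phase-energy-strong}, using the difference bound $C(1+|z_1|^3+|z_2|^3)|z_1-z_2|$ together with the $L^\infty(0,T;L^6)$ bound and the strong $L^p$ convergence $p<6$ from Proposition~\ref{prop:phase-compactness}.

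The backward interpolants are handled identically. Theorem~\ref{thm:temperature-compactness} and Proposition~\ref{prop:phase-compactness} supply their strong convergence, and the uniform inverse-moment bound holds at every time level, including $n=0$ by Lemma~\ref{lem:initial-approximation}.

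The one point that needs care is the exponent in the reciprocal estimate. Uniform integrability only holds for powers strictly below $q$. This is exactly why the critical term $\theta^{-q}$ is excluded from $e_\reg$: it is merely bounded in $L^1$ and can concentrate, which produces the defect measure in the energy balance. Everything else in the lemma is routine bookkeeping of the terms.
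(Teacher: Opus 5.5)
Your proposal is correct and follows essentially the same route as the paper. Both arguments combine almost-everywhere convergence from the temperature compactness with uniform integrability of the sub-critical powers coming from the $L^q$ inverse-moment bound (Vitali), use the one-Lipschitz argument for $(\ln\theta)^+$, and use strong convergence of the quartic phase polynomials. Your small variations, taking $p=2$ for the negative logarithm and obtaining $F_0,F_1$ via the phase-energy lemma rather than strong $L^4$ convergence, are immaterial.
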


\begin{proof}
Theorem~\ref{thm:temperature-compactness} gives convergence almost
everywhere after extraction.  Since
$\theta_{h,\tau}^{-1}$ is bounded in $L^q(\QT)$, the family
$|\theta_{h,\tau}^{-1}|^r$ is uniformly integrable for every $r<q$.
Vitali's theorem proves \eqref{eq:reciprocal-strong}.

The map $s\mapsto(\ln s)^+$ is one-Lipschitz on $(0,\infty)$, and hence
\eqref{eq:positive-log-strong} follows from the strong $L^2$ convergence of
the temperatures.  For the negative logarithm choose $p>1$.  Since
$[(-\ln s)^+]^p\le C_{p,q}(1+s^{-q})$, the negative logarithms are bounded
in $L^p(\QT)$ and therefore uniformly integrable.  Their almost-everywhere
convergence and Vitali's theorem give \eqref{eq:negative-log-strong}.
Finally,
$$
 \|\theta_{h,\tau}^{1-q}\|_{L^{q/(q-1)}(\QT)}^{q/(q-1)}
 =\int_{\QT}\theta_{h,\tau}^{-q}\dx\dtm\le C.
$$
Because $q/(q-1)>1$, the family is uniformly integrable, which proves
\eqref{eq:entropy-power-strong}.

The phase variables converge strongly in $L^4(\QT)$, so the quartic
polynomials $F_0(z_{h,\tau})$ and $F_1(z_{h,\tau})$ converge strongly in
$L^1(\QT)$.  Combining this fact with the explicit formulas for
$e_\reg$ and $\Sbulk$ proves
\eqref{eq:regular-energy-strong}--\eqref{eq:bulk-entropy-strong}.  The same
argument applies to the backward interpolants by
Theorem~\ref{thm:temperature-compactness} and
Proposition~\ref{prop:phase-compactness}.
\end{proof}

Define the critical singular-energy densities and their associated
time-dependent Radon measures by
\begin{equation}
 g_{h,\tau}^{\pm}
 :=\eps_q(\theta_{h,\tau}^{\pm})^{-q},
 \qquad
 \nu_{h,\tau}^{\pm}(t)
 :=g_{h,\tau}^{\pm}(t,\cdot)\,\mathrm dx
 \in\mathcal M_+(\overline\Omega).
 \label{eq:singular-measure}
\end{equation}
The strong convergence of the inverse temperature rules out an oscillation
defect in this term.  Indeed, for every $M>0$ the bounded continuous
truncation
$$
 b_M(s):=\min\{\eps_q s^{-q},M\},
 \qquad b_M(0):=M,
$$
satisfies
$b_M(\theta_{h,\tau}^{\pm})\to b_M(\theta)$ strongly in $L^1(\QT)$
by almost-everywhere convergence and dominated convergence.  Hence the
ordinary Young measure is the Dirac mass at $\theta$, and the only possible
loss of compactness in the untruncated critical energy is concentration, in
the sense of the concentration defects discussed in
\cite[Sect.~5.1.1]{FeireislLukacovaMizerovaShe2021}.

\begin{lemma}[Critical-energy concentration defect]
\label{lem:defect-construction}
After passing to a further subsequence, there exists
$$
 \lambda\in L^\infty_{\rm w*}
 (0,T;\mathcal M_+(\overline\Omega))
$$
such that, for both temporal interpolants,
\begin{equation}
 \nu_{h,\tau}^{\pm}
 \stackrel{*}{\rightharpoonup}
 \eps_q\theta^{-q}\,\mathrm dx+\lambda
 \quad\text{in }
 L^\infty_{\rm w*}(0,T;\mathcal M(\overline\Omega)).
 \label{eq:defect-measure-valued}
\end{equation}
In particular, the associated space--time measures satisfy
\begin{equation}
 g_{h,\tau}^{\pm}\,\mathrm dx\,\mathrm dt
 \stackrel{*}{\rightharpoonup}
 \eps_q\theta^{-q}\,\mathrm dx\,\mathrm dt
 +\lambda_t\,\mathrm dt
 \quad\text{in }\mathcal M(\overline\Omega\times[0,T]).
 \label{eq:defect-measure}
\end{equation}
Moreover,
\begin{equation}
 \theta^{-q}\in L^\infty(0,T;L^1(\Omega)),
 \qquad
 \operatorname*{ess\,sup}_{t\in(0,T)}
 \norm{\lambda_t}_{\mathcal M(\overline\Omega)}
 \le C.
 \label{eq:defect-fibre-bounds}
\end{equation}
The positive representative $t\mapsto\lambda_t$ is unique up to a
Lebesgue-null set, and the associated space--time measure
$\lambda_t\,\mathrm dt$ has no atoms in the time variable.
\end{lemma}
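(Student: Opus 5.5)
The plan is to combine weak-star compactness in the dual space $L^\infty_{\rm w*}(0,T;\mathcal M(\overline\Omega))=[L^1(0,T;C(\overline\Omega))]^*$ with Fatou-type lower semicontinuity.

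\emph{Compactness.} By Theorem~\ref{thm:uniform-estimates}, $\sup_n\int_\Omega(\theta_h^n)^{-q}\dx\le C$. Hence $\nu_{h,\tau}^\pm$ are bounded in $L^\infty_{\rm w*}(0,T;\mathcal M(\overline\Omega))$ with norm at most $C$. Since $L^1(0,T;C(\overline\Omega))$ is separable, Banach--Alaoglu yields a weak-star convergent subsequence, $\nu_{h,\tau}^+\stackrel{*}{\rightharpoonup}\nu$, with $\operatorname*{ess\,sup}_t\|\nu_t\|\le C$. Testing with non-negative $\Phi$ shows that $\nu_t\ge0$ for a.e.\ $t$.

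\emph{Equal limits for both interpolants.} Over a cell $(t^n,t^{n+1}]$ the backward interpolant is the forward one shifted by $\tau$. For $\Phi\in C([0,T];C(\overline\Omega))$ we therefore get
\begin{equation*}
 \int_0^T\pair{\nu^-_{h,\tau}-\nu^+_{h,\tau}}{\Phi}\dtm=\int\pair{\nu^+_{h,\tau}(t)}{\Phi(t+\tau)-\Phi(t)}\dtm+O(\tau),
\end{equation*}
where the $O(\tau)$ term collects the endpoint cells and uses the $\tau$-integrated bound together with the bounded initial availability. The right-hand side tends to $0$ by uniform continuity of $\Phi$. Density of such $\Phi$ in $L^1(0,T;C(\overline\Omega))$ and the uniform bound then give the same limit $\nu$ for both interpolants.

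\emph{Identifying the defect.} Put $\lambda:=\nu-\eps_q\theta^{-q}\dx$. To show $\lambda\ge0$, fix $0\le\Phi$ and the truncation $b_M$. Then
\begin{equation*}
 \int\pair{\nu^\pm_{h,\tau}}{\Phi}\dtm\ge\int(b_M(\theta^\pm_{h,\tau}),\Phi)\dtm\to\int(b_M(\theta),\Phi)\dtm,
\end{equation*}
where the convergence uses the strong $L^1$ convergence of $b_M(\theta^\pm_{h,\tau})$ noted before the lemma. Letting $M\to\infty$ by monotone convergence gives $\int\pair{\nu}{\Phi}\ge\int(\eps_q\theta^{-q},\Phi)$. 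This also shows $\int\Phi\,\eps_q\theta^{-q}\le C\|\Phi\|_{L^1(0,T;C)}$ for all such $\Phi$. Taking $\Phi=\phi(t)\cdot1$ yields $\theta^{-q}\in L^\infty(0,T;L^1)$, and since $\lambda_t\le\nu_t$ we get $\|\lambda_t\|\le C$, which is \eqref{eq:defect-fibre-bounds}. The space--time statement \eqref{eq:defect-measure} is just the restriction to test functions $\Phi\in C(\overline\Omega\times[0,T])\subset L^1(0,T;C(\overline\Omega))$.

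\emph{Uniqueness and absence of atoms.} Uniqueness of the representative follows from the isometric duality: two weak-star measurable families that agree against all of $L^1(0,T;C)$ agree for a.e.\ $t$ once tested on a countable dense subset of $C(\overline\Omega)$. For the atoms, the space--time measure satisfies $(\lambda_t\,\mathrm dt)(\overline\Omega\times I)\le C|I|$ for every interval $I$ by the fibre bound, so it charges no time slice $\{t_0\}\times\overline\Omega$.

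I expect the main subtlety to be the forward/backward identification at the time endpoints, in particular the first cell, where the data are $\theta_{0,h}$. That cell is controlled by the convergent initial availability of Lemma~\ref{lem:initial-approximation}.
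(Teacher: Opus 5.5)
Your proposal is correct and follows essentially the paper's route. Both arguments use Banach--Alaoglu in $[L^1(0,T;C(\overline\Omega))]^*$ and a Fatou-type lower bound; your truncation $b_M$ followed by monotone convergence is equivalent to the paper's direct use of Fatou's lemma. Both identify the backward interpolant by the cell-shift argument; you use uniform continuity of $\Phi$ where the paper uses $C^1$ tests. Both exclude time atoms with the fibre mass bound.

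The one step you only assert is fibrewise positivity of $\nu_t$ and $\lambda_t$ from the integrated inequalities. The paper secures this by testing with $\zeta(t)\phi_j(x)$ for a countable dense family of non-negative $\phi_j\in C(\overline\Omega)$ and intersecting the resulting full-measure sets.
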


\begin{proof}
The discrete availability estimate gives, for both interpolants,
\begin{equation}
 \operatorname*{ess\,sup}_{t\in(0,T)}
 \norm{\nu_{h,\tau}^{\pm}(t)}_{\mathcal M(\overline\Omega)}
 =
 \operatorname*{ess\,sup}_{t\in(0,T)}
 \int_\Omega g_{h,\tau}^{\pm}(t)\dx
 \le C.
 \label{eq:singular-measure-Linfty-bound}
\end{equation}
We first use the forward interpolant.  By the weak-star
measure-valued duality introduced in
Subsection~\ref{sec:theoretical-tools},
$\nu_{h,\tau}^{+}$ defines a bounded positive functional on
$L^1(0,T;C(\overline\Omega))$.  Since this predual is separable,
the weak-star topology is metrizable on bounded subsets of its dual;
Banach--Alaoglu therefore yields, after extraction,
\begin{equation}
 \nu_{h,\tau}^{+}\stackrel{*}{\rightharpoonup}\nu
 \quad\text{in }
 L^\infty_{\rm w*}(0,T;\mathcal M(\overline\Omega))
 \label{eq:critical-measure-prelimit}
\end{equation}
for some
$\nu\in L^\infty_{\rm w*}
 (0,T;\mathcal M(\overline\Omega))$, with
$\norm{\nu}_{L^\infty_{\rm w*}}\le C$ by weak-star lower
semicontinuity of the dual norm.  To record positivity at the level of the
time fibres, let $\{\phi_j\}_{j\in\mathbb N}$ be dense in the positive
unit ball of $C(\overline\Omega)$.  For every
$\zeta\in C_c(0,T)$, $\zeta\ge0$, weak-star convergence gives
$$
 \int_0^T\zeta(t)\pair{\nu_t}{\phi_j}\dtm
 =\lim_{h,\tau\to0}
 \int_0^T\zeta(t)\pair{\nu_{h,\tau}^+(t)}{\phi_j}\dtm\ge0.
$$
Hence $\pair{\nu_t}{\phi_j}\ge0$ for almost every $t$.  Intersecting
the corresponding full-measure sets and using density and the uniform dual
norm bound yields
$$
 \nu_t\in\mathcal M_+(\overline\Omega),
 \qquad
 \nu_t(\overline\Omega)
 =\norm{\nu_t}_{\mathcal M(\overline\Omega)}\le C
 \quad\text{for almost every }t.
$$
This is the same compactness mechanism used for time-dependent energy
concentration measures in
\cite[Sect.~5.1.3.2]{FeireislLukacovaMizerovaShe2021}.

Extend $s\mapsto s^{-q}$ by $+\infty$ at $s=0$. By
Theorem~\ref{thm:temperature-compactness},
$\theta_{h,\tau}^{+}\to\theta$ almost everywhere in $\QT$. Hence,
for every non-negative
$\Phi\in C([0,T]\times\overline\Omega)$, Fatou's lemma and
\eqref{eq:critical-measure-prelimit} give
\begin{equation}
 \int_0^T\!\!\int_\Omega
 \eps_q\theta^{-q}\Phi\dx\dtm
 \le
 \int_0^T\pair{\nu_t}{\Phi(t)}\dtm.
 \label{eq:critical-measure-Fatou}
\end{equation}
Taking $\Phi(t,x)=\zeta(t)$, with
$\zeta\in C_c(0,T)$, $\zeta\ge0$, yields
$$
 \int_0^T\zeta(t)
 \left(\eps_q\int_\Omega\theta(t)^{-q}\dx\right)\dtm
 \le
 \int_0^T\zeta(t)\nu_t(\overline\Omega)\dtm
 \le C\int_0^T\zeta(t)\dtm.
$$
Therefore
\begin{equation}
 \eps_q\int_\Omega\theta(t)^{-q}\dx\le C
 \quad\text{for almost every }t\in(0,T),
 \label{eq:limit-critical-fibre-bound}
\end{equation}
which proves the first assertion in
\eqref{eq:defect-fibre-bounds}. In particular, Fubini's theorem shows that the map
$$
 \mu_t:=\eps_q\theta(t)^{-q}\,\mathrm dx
$$
is weak-star measurable and belongs to
$L^\infty_{\rm w*}(0,T;\mathcal M_+(\overline\Omega))$.

Set $\Lambda:=\nu-\mu$. We now verify positivity of its fibres rather
than merely positivity of the associated space--time functional. Let
$\{\phi_j\}_{j\in\mathbb N}$ be a countable dense subset of the positive
unit ball of $C(\overline\Omega)$. Testing
\eqref{eq:critical-measure-Fatou} with
$\Phi(t,x)=\zeta(t)\phi_j(x)$, where
$\zeta\in C_c(0,T)$, $\zeta\ge0$, shows that
$$
 \pair{\Lambda_t}{\phi_j}\ge0
 \quad\text{for almost every }t.
$$
After intersecting these sets with the full-measure set on which the
measure norms are bounded, the inequality holds for every $j$ on one common
full-measure set.  Since
$\norm{\Lambda_t}_{\mathcal M}\le 2C$ there, approximation of the
normalised function $\phi/\norm{\phi}_{C(\overline\Omega)}$ by the dense
family extends the inequality to every
$\phi\in C(\overline\Omega)$, $\phi\ge0$. Thus
$\Lambda_t\in\mathcal M_+(\overline\Omega)$ for almost every $t$.
Define $\lambda:=\Lambda$. Since
$\nu_t=\mu_t+\lambda_t$ and both terms are non-negative,
$$
 \norm{\lambda_t}_{\mathcal M}
 =\lambda_t(\overline\Omega)
 \le\nu_t(\overline\Omega)\le C
 \quad\text{for almost every }t,
$$
which proves the remaining estimate in
\eqref{eq:defect-fibre-bounds} and the forward convergence in
\eqref{eq:defect-measure-valued}.

It remains to identify the backward interpolant. Put
$$
 \nu_h^n:=\eps_q(\theta_h^n)^{-q}\,\mathrm dx,
 \qquad n=0,\ldots,N.
$$
For
$\Phi\in C^1([0,T];C(\overline\Omega))$, a direct shift of the time-cell
sum gives
\begin{align}
 &\int_0^T
 \pair{\nu_{h,\tau}^{+}(t)-\nu_{h,\tau}^{-}(t)}{\Phi(t)}\dtm
 \nonumber\\
 &\quad=
 \sum_{n=1}^{N-1}\int_{t^{n-1}}^{t^n}
 \pair{\nu_h^n}{\Phi(t)-\Phi(t+\tau)}\dtm
 +\int_{t^{N-1}}^T\pair{\nu_h^N}{\Phi(t)}\dtm
 -\int_0^\tau\pair{\nu_h^0}{\Phi(t)}\dtm.
 \label{eq:forward-backward-measure-shift}
\end{align}
Using the uniform bound \eqref{eq:singular-measure-Linfty-bound},
\begin{equation}
 \left|
 \int_0^T
 \pair{\nu_{h,\tau}^{+}-\nu_{h,\tau}^{-}}{\Phi}\dtm
 \right|
 \le
 C\tau\left(
 \norm{\partial_t\Phi}_{L^\infty(0,T;C(\overline\Omega))}
 +\norm{\Phi}_{L^\infty(0,T;C(\overline\Omega))}
 \right).
 \label{eq:forward-backward-measure-bound}
\end{equation}
Banach-valued temporal mollification shows that
$C^1([0,T];C(\overline\Omega))$ is dense in
$L^1(0,T;C(\overline\Omega))$.  For a general test $\Psi$ in the latter
space, approximate it by $\Phi_j$ in the former space.  The uniform
$L^\infty_{\rm w*}$ bound controls the error by
$2C\norm{\Psi-\Phi_j}_{L^1(0,T;C)}$, whereas
\eqref{eq:forward-backward-measure-bound} tends to zero for each fixed
$\Phi_j$.  First letting $(h,\tau)\to0$ and then $j\to\infty$ proves that
$\nu_{h,\tau}^{+}-\nu_{h,\tau}^{-}$ converges weak-star to zero in
$L^\infty_{\rm w*}(0,T;\mathcal M(\overline\Omega))$. Thus the backward interpolant has the same limit.

Finally, the positive bounded functional
$$
 \Phi\longmapsto
 \int_0^T\pair{\lambda_t}{\Phi(t)}\dtm,
 \qquad
 \Phi\in C(\overline\Omega\times[0,T]),
$$
defines, by the Riesz representation theorem, a finite non-negative Radon
measure $\boldsymbol\lambda$ on
$\overline\Omega\times[0,T]$.  Its time marginal is absolutely
continuous.  Indeed, for every $\zeta\in C([0,T])$,
$$
 \int_{[0,T]}\zeta(t)\,
 \mathrm d(\pi_{t\#}\boldsymbol\lambda)(t)
 =\int_0^T\zeta(t)\lambda_t(\overline\Omega)\dtm.
$$
Uniqueness in the Riesz representation theorem therefore gives
$$
 \boldsymbol\lambda(\overline\Omega\times B)
 =\int_B\lambda_t(\overline\Omega)\dtm\le C|B|
 \qquad\text{for every Borel }B\subset[0,T].
$$
Consequently, $\boldsymbol\lambda$ has no atoms in the time variable, and
\eqref{eq:defect-measure} follows from
\eqref{eq:defect-measure-valued}.
\end{proof}

\begin{theorem}[Convergence of subsequences]\label{thm:conv_to_limit}
     For every $h>0$ and every time step $\tau=T/N$, the fully discrete problem
of Subsection~\ref{sec:scheme} admits at least one global discrete trajectory
such that
\begin{equation}
 \theta_h^n>0\quad\text{on }\overline\Omega,
 \qquad n=0,\ldots,N.
 \label{eq:main-discrete-positivity}
\end{equation}
Moreover, let $(h_k,\tau_k)\to(0,0)$ be arbitrary, without a coupling
condition, and choose at every time step any solution furnished by
Theorem~\ref{thm:one-step-existence}.  Then there exist a subsequence, not
relabeled, and \aaron{functions}
$(\rho,\mu_\rho,\theta,\eta,\mu_\eta,\lambda)$ such that
\begin{align}
 z_{h_k,\tau_k}^{\pm}&\to z
 &&\text{strongly in }L^p(\QT;\mathbb R^2),
 &&1\le p<6,
 \label{eq:main-phase-convergence}\\
 \theta_{h_k,\tau_k}^{\pm}&\to\theta
 &&\text{strongly in }L^2(\QT),
 \label{eq:main-temperature-convergence}\\
 \mu_{\rho,h_k,\tau_k}&\rightharpoonup\mu_\rho
 &&\text{weakly in }L^2(0,T;H^1(\Omega)),
 \label{eq:main-murho-convergence}\\
 \nabla\theta_{h_k,\tau_k}&\rightharpoonup\nabla\theta
 &&\text{weakly in }L^2(\QT;\mathbb R^d),
 \label{eq:main-gradient-temperature-convergence}\\
 \mu_{\eta,h_k,\tau_k}&\rightharpoonup\mu_\eta
 &&\text{weakly in }L^2(\QT).
 \label{eq:main-mueta-convergence}
\end{align}
The critical singular energies satisfy
\begin{equation}
\eps_q(\theta_{h_k,\tau_k}^{\pm})^{-q}\,\mathrm dx
 \stackrel{*}{\rightharpoonup}
 \eps_q\theta^{-q}\,\mathrm dx+\lambda
 \quad\text{in }
 L^\infty_{\rm w*}(0,T;\mathcal M(\overline\Omega)),
 \label{eq:main-defect-convergence}
\end{equation}
where
$\lambda\in
L^\infty_{\rm w*}(0,T;\mathcal M_+(\overline\Omega))$.
In particular, the corresponding space--time measures converge as in
\eqref{eq:defect-measure}.
\end{theorem}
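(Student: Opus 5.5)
This theorem mainly collects results already proved, so the plan is to assemble them in order and extract one common subsequence.

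\emph{Discrete trajectories.} Existence and positivity of a global discrete trajectory, \eqref{eq:main-discrete-positivity}, follows by induction. Start from the positive data of Lemma~\ref{lem:initial-approximation} and apply Theorem~\ref{thm:one-step-existence} at each step $n=0,\dots,N-1$. Each step needs only $\theta_h^n>0$ on $\overline\Omega$, and it returns $\theta_h^{n+1}>0$; this is Corollary~\ref{cor:global-discrete-solutions}.

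\emph{Uniform bounds.} For an arbitrary choice of step solutions, Theorem~\ref{thm:discrete-structure} gives the availability inequality \eqref{eq:availability-dissipation}. The initial availability is uniformly bounded because, by Lemma~\ref{lem:initial-approximation}, the initial energies, entropies and $H^1$ norms converge. Hence Theorem~\ref{thm:uniform-estimates} and Proposition~\ref{prop:discrete-time-derivatives} hold with constants independent of $(h_k,\tau_k)$. No relation between $h$ and $\tau$ enters anywhere, which is why no coupling condition is needed.

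\emph{Extraction of the limits.} We pass to subsequences successively, finitely many times.
\begin{enumerate}
\item Proposition~\ref{prop:phase-compactness} gives \eqref{eq:main-phase-convergence}.
\item Theorem~\ref{thm:temperature-compactness} gives \eqref{eq:main-temperature-convergence} for both interpolants.
\item The bound $\tau\sum\|\theta_h^{n+1}\|_{H^1}^2\le C$ yields a weak limit of $\theta_{h,\tau}$ in $L^2(0,T;H^1)$. It coincides with $\theta$ by the strong $L^2$ convergence, which gives \eqref{eq:main-gradient-temperature-convergence}.
\item Reflexivity and the bounds on $\mu_{\rho,h}$ in $L^2(0,T;H^1)$ and on $\mu_{\eta,h}$ in $L^2(\QT)$ give \eqref{eq:main-murho-convergence} and \eqref{eq:main-mueta-convergence}.
\item Lemma~\ref{lem:defect-construction}, applied along the current subsequence, gives \eqref{eq:main-defect-convergence} and the space--time statement \eqref{eq:defect-measure} with $\lambda\ge0$.
\end{enumerate}

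\emph{Point requiring care.} The main thing to check is consistency: every limit object must be attached to the same final subsequence, and the forward and backward interpolants must have identical limits. The temperature proof and the defect lemma already handle both interpolants. For the phase variables this comes from the discrete Aubin--Lions Lemma~\ref{lem:discrete-aubin-lions}. I would also note that the defect lemma uses the almost-everywhere convergence of $\theta_{h,\tau}^+$, so its extraction must come after the temperature extraction. With that ordering, all convergences hold along one subsequence, completing the proof.
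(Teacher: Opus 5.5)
Your proposal is correct and follows essentially the same route as the paper. Both obtain positive global trajectories from Corollary~\ref{cor:global-discrete-solutions}, take the weak limits from Theorem~\ref{thm:uniform-estimates}, and get the strong convergences from Proposition~\ref{prop:phase-compactness} and Theorem~\ref{thm:temperature-compactness}; they then identify the gradient limit and apply Lemma~\ref{lem:defect-construction} last, with finitely many nested extractions. You also spell out two points the paper leaves implicit: why the initial availability is uniformly bounded, and why the defect extraction must come after the temperature extraction.
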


\begin{proof}
Corollary~\ref{cor:global-discrete-solutions} gives a positive global
discrete trajectory for every pair $(h,\tau)$. The uniform estimates of
Theorem~\ref{thm:uniform-estimates} imply, after extraction,
$$
 \mu_{\rho,h,\tau}\rightharpoonup\mu_\rho
 \quad\text{in }L^2(0,T;H^1(\Omega)),
 \qquad
 \mu_{\eta,h,\tau}\rightharpoonup\mu_\eta
 \quad\text{in }L^2(\QT),
$$
and
$$
 \theta_{h,\tau}\rightharpoonup\theta
 \quad\text{in }L^2(0,T;H^1(\Omega)).
$$
Proposition~\ref{prop:phase-compactness} yields the strong phase
convergences in \eqref{eq:main-phase-convergence}, while
Theorem~\ref{thm:temperature-compactness} yields
\eqref{eq:main-temperature-convergence} for both temporal interpolants.
The weak limit of the gradients is therefore $\nabla\theta$, which gives
\eqref{eq:main-gradient-temperature-convergence}.
Finally, Lemma~\ref{lem:defect-construction} supplies
\eqref{eq:main-defect-convergence}, after passing to one further
subsequence. Since only finitely many extractions are involved, all
convergences hold along one common subsequence.
\end{proof}

\begin{lemma}[Time representatives and product rules]
\label{lem:time-representatives}
Every generalised dissipative weak solution has representatives satisfying
\begin{equation}
 \rho,\eta\in C([0,T];L^2(\Omega)),
 \qquad
 \mathfrak E\in C_{\rm w*}([0,T];\mathcal M(\overline\Omega)).
 \label{eq:time-representatives}
\end{equation}
More precisely, for every integer $m>d/2$,
\begin{equation}
 \mathfrak E\in W^{1,2}(0,T;H^{-m}(\Omega))
 \hookrightarrow C([0,T];H^{-m}(\Omega)),
 \qquad
 \partial_t\mathfrak E=\diver\mathcal J_e.
 \label{eq:energy-measure-time-regularity}
\end{equation}
The initial trace is
$\mathfrak E_0=e(z_0,\theta_0)\,\mathrm dx$.  In particular, for all
$t\in[0,T]$ and all sufficiently regular time-dependent periodic tests,
\begin{align}
 (\rho(t),v(t))-(\rho_0,v(0))
 &=\int_0^t\bigl[(\rho,\partial_s v)
       -(\mathcal J_\rho,\nabla v)\bigr]\,\mathrm ds,
 \label{eq:rho-product-rule}\\
 \pair{\mathfrak E_t}{\xi(t)}
 -\int_\Omega e(z_0,\theta_0)\xi(0)\dx
 &=\int_0^t\bigl[\pair{\mathfrak E_s}{\partial_s\xi(s)}
       -(\mathcal J_e,\nabla\xi)\bigr]\,\mathrm ds,
 \label{eq:energy-product-rule}\\
 (\eta(t),w(t))-(\eta_0,w(0))
 &=\int_0^t\bigl[(\eta,\partial_s w)
       -(\mathcal R_\eta,w)\bigr]\,\mathrm ds.
 \label{eq:eta-product-rule}
\end{align}
Here $v$ may be taken in
$W^{1,1}(0,T;L^2)\cap L^2(0,T;H^1)$ and $w$ in
$W^{1,1}(0,T;L^2)$.  For the energy identity it is sufficient that
$\xi\in W^{1,1}(0,T;W^{1,\infty})$.
\end{lemma}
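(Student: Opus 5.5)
The plan is to read off distributional time derivatives from the weak formulations \eqref{eq:weak-rho}--\eqref{eq:weak-eta}, conclude absolute continuity into suitable dual spaces, and then use density to enlarge the class of test functions.

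\emph{Phase variables.} Testing \eqref{eq:weak-rho} with $v(t,x)=\zeta(t)\phi(x)$, $\zeta\in C_c^\infty(0,T)$, shows $\partial_t\rho=\diver\mathcal J_\rho$ in $\mathcal D'(0,T;H^{-1})$. Since $\mathcal J_\rho\in L^2(\QT)$ by boundedness of $\LL$ and the regularity of $\mu_\rho,\theta,\mu_\eta$, we get $\partial_t\rho\in L^2(0,T;H^{-1})$. Together with $\rho\in L^2(0,T;H^1)$, the Lions--Magenes lemma gives $\rho\in C([0,T];L^2)$ and the product rule for $\frac{\mathrm d}{\mathrm dt}(\rho,v)$ with $v\in W^{1,1}(0,T;L^2)\cap L^2(0,T;H^1)$. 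More precisely, one first proves it for smooth $v$, where it is the classical chain rule for $H^1/H^{-1}$ duality, and then closes by density, using that $\rho\in L^\infty(0,T;L^2)$ controls the $(\rho,\partial_s v)$ term in $L^1$. The initial trace is identified by taking a test $v=\zeta(t)\phi$ with $\zeta(0)=1$ in \eqref{eq:weak-rho} and comparing it with the integrated product rule. For $\eta$ the situation is simpler: \eqref{eq:weak-eta} gives $\partial_t\eta=-\mathcal R_\eta\in L^2(\QT)$, so $\eta\in W^{1,2}(0,T;L^2)\subset C([0,T];L^2)$, and the product rule holds for $w\in W^{1,1}(0,T;L^2)$.

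\emph{Energy measure.} Set $\mathfrak E_t=e\,\mathrm dx+\lambda_t$. By \eqref{eq:e-s-growth} and the bounds on $\theta$, $\theta^{-q}$ and $z$ in Definition~\ref{def:weak-solution}, this lies in $L^\infty_{\rm w*}(0,T;\mathcal M(\overline\Omega))$. For $m>d/2$ we have $H^m\hookrightarrow C(\overline\Omega)$, hence $\mathcal M\hookrightarrow H^{-m}$ and $\mathfrak E\in L^\infty(0,T;H^{-m})$. Testing \eqref{eq:weak-energy} with $\zeta(t)\phi(x)$ gives $\partial_t\mathfrak E=\diver\mathcal J_e\in L^2(0,T;H^{-1})\subset L^2(0,T;H^{-m})$, so $\mathfrak E\in W^{1,2}(0,T;H^{-m})\hookrightarrow C([0,T];H^{-m})$. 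The initial value $\mathfrak E_0=e(z_0,\theta_0)\,\mathrm dx$ again follows from the term $-(e(z_0,\theta_0),\xi(0))$ in \eqref{eq:weak-energy}, tested with $\zeta(0)=1$.

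\emph{Main obstacle: weak-star continuity in $\mathcal M$.} The difficulty is upgrading to $C_{\rm w*}([0,T];\mathcal M)$ at every time, since $\lambda$ is only defined almost everywhere. I would argue as follows. The continuous $H^{-m}$-valued representative agrees with $\mathfrak E_t$ for a.e.\ $t$, where it has mass at most $C$. For an arbitrary $t$, take $t_k\to t$ among these good times. The measures $\mathfrak E_{t_k}$ are bounded in $\mathcal M$, so by Banach--Alaoglu they have weak-star accumulation points. Every such accumulation point coincides on the dense subspace $H^m\subset C(\overline\Omega)$ with the $H^{-m}$-limit, so it is unique; this defines $\mathfrak E_t\in\mathcal M$ with mass at most $C$. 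The same boundedness-plus-density argument then gives weak-star continuity along arbitrary sequences. Nonnegativity of $\lambda_t$ is not needed here. The resulting representative satisfies \eqref{eq:energy-product-rule}: first for smooth $\xi$, by integrating $\partial_t\langle\mathfrak E,\xi\rangle$ in $H^{-m}/H^m$ duality, and then for $\xi\in W^{1,1}(0,T;W^{1,\infty})$ by density. The density step is justified because $\mathfrak E$ is bounded in $\mathcal M$, $\partial_s\xi\in L^1(0,T;C)$, and $\mathcal J_e\in L^2$ pairs with $\nabla\xi\in L^1(0,T;L^\infty)$. There is one point to handle carefully: $W^{1,\infty}$ tests are not in $H^m$. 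I would approximate them by mollification in space, which converges uniformly in $C(\overline\Omega)$ and weak-star in $W^{1,\infty}$, and use the everywhere-defined measure pairing.
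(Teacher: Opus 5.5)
Your proposal is correct and follows essentially the same route as the paper. You use Lions--Magenes for $\rho$, $W^{1,2}(0,T;L^2)$ for $\eta$, the embedding $\mathcal M(\overline\Omega)\hookrightarrow H^{-m}$ to get $\mathfrak E\in W^{1,2}(0,T;H^{-m})$, and weak-star continuity from the uniform mass bound plus density of smooth functions in $C(\overline\Omega)$, followed by mollification to enlarge the test class. Your Banach--Alaoglu step makes explicit something the paper uses tacitly, namely that the continuous representative is a bounded measure at every time; note only that integrability of $(\mathcal J_e,\nabla\xi)$ comes from $\nabla\xi\in C([0,T];L^\infty)$, via $W^{1,1}(0,T;W^{1,\infty})\hookrightarrow C([0,T];W^{1,\infty})$, and not from $\nabla\xi\in L^1(0,T;L^\infty)$ as you wrote, because $\mathcal J_e$ is only $L^2$ in time.
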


\begin{proof}
The flux bounds imply
$\partial_t\rho=\diver\mathcal J_\rho\in L^2(0,T;H^{-1})$ and
$\partial_t\eta=-\mathcal R_\eta\in L^2(\QT)$.  Together with the state
bounds, the standard Hilbert-space evolution theorem gives the first two
representatives in \eqref{eq:time-representatives} and the corresponding
product rules.

The growth estimate \eqref{eq:e-s-growth}, the weak-solution bounds, and the
uniform bound on $\lambda_t$ show that $\mathfrak E$ is essentially bounded
in $\mathcal M(\overline\Omega)$.  Since finite measures embed continuously
into $H^{-m}(\Omega)$ for every integer $m>d/2$, the weak energy equation
and $\mathcal J_e\in L^2(\QT)$ imply
\eqref{eq:energy-measure-time-regularity}.  The weak formulation identifies
its value at $t=0$.  The $H^{-m}$-continuous representative is also weak-star
continuous as a measure: approximate a continuous spatial test uniformly by
smooth functions and use the uniform total-variation bound.  Formula
\eqref{eq:energy-product-rule} follows first for smooth tests.  For the
stated Lipschitz class, use periodic spatial mollification and then time
mollification: the test functions and their time derivatives converge
uniformly, while the spatial gradients converge almost everywhere and stay
uniformly bounded.  The measure and $L^2$ flux pairings therefore pass to
the limit by bounded and dominated convergence.  The phase product rules
follow by the usual Hilbert-space density argument.
\end{proof}

\begin{lemma}[Identification of the split phase derivatives]
\label{lem:split-identification}
For $\sigma\in\{\rho,\eta\}$,
\begin{align}
 &\partial_\sigma\psi_\vex(z_{h,\tau},\theta_{h,\tau})
 +\partial_\sigma\psi_\cav(z_{h,\tau}^-,\theta_{h,\tau})\to\partial_\sigma\psi(z,\theta)
 \qquad\text{strongly in }L^1(\QT).
 \label{eq:split-identification}
\end{align}
\end{lemma}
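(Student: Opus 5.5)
Since $\psi_\vex+\psi_\cav=\psi$ at equal arguments, we split
\begin{align*}
 &\partial_\sigma\psi_\vex(z_{h,\tau},\theta_{h,\tau})
 +\partial_\sigma\psi_\cav(z_{h,\tau}^-,\theta_{h,\tau})-\partial_\sigma\psi(z,\theta)\\
 &\quad=\bigl[\partial_\sigma\psi(z_{h,\tau},\theta_{h,\tau})-\partial_\sigma\psi(z,\theta)\bigr]
 +\bigl[\partial_\sigma\psi_\cav(z_{h,\tau}^-,\theta_{h,\tau})-\partial_\sigma\psi_\cav(z_{h,\tau},\theta_{h,\tau})\bigr]
\end{align*}
and show that each bracket tends to zero in $L^1(\QT)$. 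By \eqref{eq:modified-potential}, $\partial_\sigma\psi(z,\theta)=-\partial_\sigma F_0(z)+\theta\,\partial_\sigma F_1(z)$. By \eqref{eq:global-splitting}, $\partial_\sigma\psi_\cav(z,\theta)=-M(1+\theta)\partial_\sigma H(z)$. Both are affine in $\theta$ with cubic polynomial coefficients in $z$. The argument therefore reduces to products of a strongly $L^2$-convergent temperature factor with strongly convergent cubic phase factors.

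\textbf{First bracket.} We write
\[
\theta_{h,\tau}\partial_\sigma F_1(z_{h,\tau})-\theta\,\partial_\sigma F_1(z)
=(\theta_{h,\tau}-\theta)\partial_\sigma F_1(z_{h,\tau})+\theta\bigl(\partial_\sigma F_1(z_{h,\tau})-\partial_\sigma F_1(z)\bigr).
\]
By \eqref{eq:phase-growth}, $|\partial_\sigma F_1(z_{h,\tau})|\le C(1+|z_{h,\tau}|^3)$, which is bounded in $L^2(\QT)$ because $z_{h,\tau}$ is bounded in $L^\infty(0,T;L^6)$. Theorem~\ref{thm:conv_to_limit} gives $\theta_{h,\tau}\to\theta$ in $L^2(\QT)$, so the first product vanishes in $L^1$ by Cauchy--Schwarz. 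For the second product we use the mean-value bound $|DF_1(z_1)-DF_1(z_2)|\le C(1+|z_1|^2+|z_2|^2)|z_1-z_2|$ together with H\"older's inequality in $L^{3/2}\cdot L^3$. Since $z_{h,\tau}\to z$ in $L^p$ for every $p<6$ by \eqref{eq:main-phase-convergence}, this gives $\partial_\sigma F_1(z_{h,\tau})\to\partial_\sigma F_1(z)$ in $L^r(\QT)$ for every $r<2$. Pairing against $\theta\in L^2$ requires exactly $L^2$, however, which sits at the endpoint. We therefore use Vitali's theorem: $|z_{h,\tau}|^3$ is bounded in $L^\infty(0,T;L^2)$ but is not obviously uniformly $L^2$-integrable in space-time. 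To circumvent this, we note that $\theta\in L^2(0,T;H^1)\subset L^2(0,T;L^6)$, while $\partial_\sigma F_1(z_{h,\tau})\to\partial_\sigma F_1(z)$ in $L^2(0,T;L^{6/5})$; the latter follows exactly as in Lemma~\ref{lem:phase-energy-strong}, using the strong $L^4(0,T;L^3)$ convergence of the phase differences. The $L^2L^6\times L^2L^{6/5}$ duality then closes this term. The $F_0$ part needs no temperature factor and converges in $L^1$ directly.

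\textbf{Second bracket.} We bound
\[
M(1+\theta_{h,\tau})\,\bigl|\partial_\sigma H(z_{h,\tau}^-)-\partial_\sigma H(z_{h,\tau})\bigr|
\le C(1+\theta_{h,\tau})(1+|z_{h,\tau}^-|^2+|z_{h,\tau}|^2)\,|z_{h,\tau}^--z_{h,\tau}|.
\]
Here $1+\theta_{h,\tau}$ is bounded in $L^2(0,T;L^6)$ and the quadratic factor is bounded in $L^\infty(0,T;L^3)$. By \eqref{eq:main-phase-convergence}, both interpolants converge to the same $z$, so $|z^-_{h,\tau}-z_{h,\tau}|\to0$ in $L^2(0,T;L^2)$. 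H\"older's inequality with exponents $6,3,2$ in space and $2,\infty,2$ in time then shows that this bracket tends to zero in $L^1(\QT)$.

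The main obstacle is the endpoint integrability in the mixed temperature--cubic products. I expect it to be resolved by using the $L^2(0,T;L^6)$ temperature bound rather than only the $L^2(\QT)$ convergence.
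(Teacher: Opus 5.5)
Your argument is correct, but it takes a different route from the paper. The paper does not decompose the expression at all. It observes that, by \eqref{eq:split-growth}, the whole expression is dominated by $C(1+\theta_{h,\tau})(1+|z_{h,\tau}|^3+|z_{h,\tau}^-|^3)$. This is bounded in $L^2(0,T;L^{3/2})\subset L^{3/2}(\QT)$, using the $L^2(0,T;L^6)$ temperature bound and the $L^\infty(0,T;L^6)$ phase bound, so the family is uniformly integrable. After extraction the arguments converge a.e.\ to $(z,z,\theta)$, and Vitali's theorem gives $L^1$ convergence; the split pieces recombine to $\partial_\sigma\psi$ because the forward and backward phases share the same limit.

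You instead use the affine dependence on $\theta$ and the mean-value bound for the cubic derivatives to write explicit H\"older estimates:
\begin{itemize}
\item $L^2\times L^2$ for the temperature error;
\item $L^2(0,T;L^6)\times L^2(0,T;L^{6/5})$ for the phase error;
\item the $6,3,2$ splitting for the forward/backward mismatch.
\end{itemize}
Both proofs rest on the same integrability observation: pairing the $L^2(0,T;L^6)$ temperature with cubic phase terms in $L^\infty(0,T;L^2)$ lands strictly above $L^1$. The endpoint issue you flag is exactly what the paper disposes of with the $L^{3/2}$ bound.

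The paper's version is shorter and uses only the growth bound and continuity, so it would survive a non-affine temperature dependence. Yours avoids a.e.\ subsequences and gives a quantitative bound in terms of $\|\theta_{h,\tau}-\theta\|_{L^2}$ and $\|z_{h,\tau}^\pm-z\|_{L^2}$. The sentence announcing Vitali's theorem in your first bracket is vestigial, since you never use it, and can be dropped.
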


\begin{proof}
The arguments converge almost everywhere, and
\eqref{eq:split-growth} bounds the expressions by
$$
 C(1+\theta_{h,\tau})
 (1+|z_{h,\tau}|^3+|z_{h,\tau}^-|^3).
$$
This family is bounded in $L^{3/2}(\QT)$ by the
$L^2(0,T;L^6)$ bound for the temperature and the
$L^\infty(0,T;L^6)$ phase bounds.  It is therefore uniformly integrable,
and Vitali's theorem proves the claim.  The forward and backward phases
have the same limit, so the split pieces recombine into
$\partial_\sigma\psi(z,\theta)$.
\end{proof}

\subsection{Passage to the limit and proof to the main theorem}
\label{sec:limit}
With these ingredients, we can proceed to prove our main result, i.e. Theorem~\ref{thm:convergence}.

\begin{proof}[Proof of Theorem~\ref{thm:convergence}]
The existence of a global positive discrete trajectory, together with the weak and strong compactness statements needed below, is provided by Theorem~\ref{thm:conv_to_limit}.  Since the old-state
variables converge almost everywhere and $\LL$ is bounded and continuous,
\begin{equation}
 \LL(z_{h,\tau}^-,\theta_{h,\tau}^-)
 \to\LL(z,\theta)
 \qquad\text{strongly in }L^p(\QT)
 \quad\text{for every finite }p.
 \label{eq:mobility-strong}
\end{equation}
If $A_{h,\tau}$ denotes any block of this matrix and
$v_{h,\tau}\rightharpoonup v$ in $L^2$, then
$A_{h,\tau}v_{h,\tau}\rightharpoonup Av$ in distributions, because
$A_{h,\tau}^{\top}\phi\to A^{\top}\phi$ strongly in $L^2$ for every
bounded test function $\phi$.  This identifies all three limiting Onsager
expressions.

We spell out the discrete summation by parts for the energy equation.  Let
$\xi\in C^1([0,T];C^\infty_{\rm per}(\Omega))$ satisfy $\xi(T)=0$, set
$\xi_h^n=\Ih\xi(t^n)$, and use $\xi_h^{n+1}$ in
\eqref{eq:fd-energy}.  Multiplication by $\tau$ and summation over
$n=0,\ldots,N-1$ give the exact identity
\begin{equation}
 -\int_0^T(e_{h,\tau}^-,\partial_t\widetilde\xi_{h,\tau})\dtm
 -(e_h^0,\xi_h^0)
 +\int_0^T(J_{e,h,\tau},\nabla\xi_{h,\tau}^+)\dtm=0,
 \label{eq:discrete-energy-tested}
\end{equation}
where
$$
 J_{e,h,\tau}
 =(\LL_{12,h,\tau}^-)^\top\nabla\mu_{\rho,h,\tau}
 -\LL_{22,h,\tau}^-\nabla\theta_{h,\tau}
 +\LL_{23,h,\tau}^-\mu_{\eta,h,\tau}.
$$
The regular part internal energy converges strongly in $L^1$, and
Lemma~\ref{lem:defect-construction} gives the singular limit for the
backward interpolant.  Since
$\partial_t\widetilde\xi_{h,\tau}\to\partial_t\xi$ and
$\nabla\xi_{h,\tau}^+\to\nabla\xi$ uniformly,
\eqref{eq:discrete-energy-tested} converges to
\eqref{eq:weak-energy}.  Initial convergence follows from
Lemma~\ref{lem:initial-approximation}.  The two phase balance laws follow
from the identical discrete summation-by-parts formula, using the strong
initial convergence and the flux identification above.

For the constitutive equations, use finite-element interpolants of smooth
space--time tests and integrate over time  Weak convergence handles the gradient terms, while
Lemma~\ref{lem:split-identification} handles the nonlinear terms.  Thus
\eqref{eq:weak-murho}--\eqref{eq:weak-mueta} hold.

It remains to pass to the entropy inequality.  Set
$Z_{h,\tau}=(\nabla\mu_{\rho,h,\tau},-\nabla\theta_{h,\tau},
\mu_{\eta,h,\tau})$.  The positive square roots of the mobility matrices
converge almost everywhere and strongly in every finite $L^p$.  Therefore,
for each $t\in(0,T)$,
\begin{equation}
 \int_0^t\!\int_\Omega Z^\top\LL(z,\theta)Z\dx\,\mathrm ds
 \le\liminf_{h,\tau\to0}
 \int_0^t\!\int_\Omega
 Z_{h,\tau}^\top\LL_h^-Z_{h,\tau}\dx\,\mathrm ds.
 \label{eq:dissipation-lsc}
\end{equation}
By \eqref{eq:bulk-entropy-strong} and
Lemma~\ref{lem:time-slice-extraction}, one common subsequence satisfies, for
almost every $t$,
$$
 \Sbulk(z_{h,\tau}^+(t),\theta_{h,\tau}^+(t))
 \to\Sbulk(z(t),\theta(t))
 \quad\text{in }L^1(\Omega),
$$
while the phase fields converge strongly in $L^2$ and weakly in $H^1$ at
that time.  Weak lower semicontinuity of the two Dirichlet integrals then
gives
$$
 \Stot(z(t),\theta(t))
 \ge\limsup_{h,\tau\to0}\Stot_h^+(t)
 \quad\text{for almost every }t.
$$
If $t\in(t^n,t^{n+1}]$, the iterated discrete entropy inequality gives
$$
 \Stot_h^+(t)\ge\Stot_h^0
 +\int_0^t\!\int_\Omega
 Z_{h,\tau}^\top\LL_h^-Z_{h,\tau}\dx\,\mathrm ds,
$$
because the omitted part of the last time cell is non-negative.  Combining
the last three displays with convergence of the initial total entropy proves
\eqref{eq:weak-entropy}.  All regularity requirements in
Definition~\ref{def:weak-solution} follow from the uniform estimates and
Lemma~\ref{lem:defect-construction}.


It remains only to record the two conservation laws stated in the theorem.
Lemma~\ref{lem:time-representatives} gives continuous representatives of
$\rho$ and of the total internal-energy measure $\mathfrak E$.
Taking spatially constant test functions in
\eqref{eq:rho-product-rule} and \eqref{eq:energy-product-rule} yields
$$
 \int_\Omega\rho(t)\dx=\int_\Omega\rho_0\dx,
 \qquad
 \mathfrak E_t(\overline\Omega)
 =\int_\Omega e(z_0,\theta_0)\dx
 \quad\text{for every }t\in[0,T].
$$
This completes the proof.
\end{proof}

\subsection{Vanishing defect measure}\label{sec:vanshdefect}

Under suitable information on the inverse temperature one can even prove that the defect measure vanishes.

\begin{proposition}[Defect-removal criteria]
\label{prop:no-defect}
Put $g_{h,\tau}=\eps_q\theta_{h,\tau}^{-q}$.  The defect vanishes if any
one of the following conditions holds:
\begin{enumerate}[(i)]
\item the singular energies are uniformly integrable,
\begin{equation}
 \lim_{M\to\infty}\sup_{h,\tau}
 \int_{\QT} g_{h,\tau}\mathbf 1_{\{g_{h,\tau}>M\}}\dx\dtm=0;
 \label{eq:uniform-integrability}
\end{equation}
\item their space--time masses converge,
\begin{equation}
 \int_{\QT} g_{h,\tau}\dx\dtm
 \longrightarrow\int_{\QT}\eps_q\theta^{-q}\dx\dtm;
 \label{eq:singular-mass-convergence}
\end{equation}
\item for some $\delta>0$,
\begin{equation}
 \sup_{h,\tau}\int_{\QT} g_{h,\tau}^{1+\delta}\dx\dtm<\infty.
 \label{eq:singular-superlinear-bound}
\end{equation}
\end{enumerate}
\end{proposition}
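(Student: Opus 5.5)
The plan is to show that (iii)$\Rightarrow$(i)$\Rightarrow$(ii) and then that (ii) forces $\lambda=0$. Throughout we use the almost-everywhere convergence $g_{h,\tau}\to\eps_q\theta^{-q}$ from Theorem~\ref{thm:temperature-compactness} and the space--time measure convergence \eqref{eq:defect-measure} of Lemma~\ref{lem:defect-construction}. The argument is identical for the forward and backward interpolants, since both have the same limit.

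\emph{(iii)$\Rightarrow$(i).} On $\{g_{h,\tau}>M\}$ we have $g_{h,\tau}\le M^{-\delta}g_{h,\tau}^{1+\delta}$. Hence
$$
\sup_{h,\tau}\int_{\QT}g_{h,\tau}\mathbf 1_{\{g_{h,\tau}>M\}}\dx\dtm\le M^{-\delta}\sup_{h,\tau}\int_{\QT}g_{h,\tau}^{1+\delta}\dx\dtm\to0 .
$$

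\emph{(i)$\Rightarrow$(ii).} The family $g_{h,\tau}$ is bounded in $L^1(\QT)$, uniformly integrable, and converges almost everywhere on the finite-measure set $\QT$. Vitali's theorem then gives $g_{h,\tau}\to\eps_q\theta^{-q}$ strongly in $L^1(\QT)$, which in particular yields (ii).

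\emph{(ii)$\Rightarrow\lambda=0$.} Test \eqref{eq:defect-measure} with $\Phi\equiv1\in C(\overline\Omega\times[0,T])$. This is legitimate because $\Omega$ is the torus, so $\overline\Omega$ is compact without boundary, and $1$ is an admissible test in $L^1(0,T;C(\overline\Omega))$. We obtain
$$
\int_{\QT}g_{h,\tau}\dx\dtm\to\int_{\QT}\eps_q\theta^{-q}\dx\dtm+\int_0^T\lambda_t(\overline\Omega)\dtm .
$$
Comparing with (ii) gives $\int_0^T\lambda_t(\overline\Omega)\dtm=0$. Since each $\lambda_t$ is a non-negative measure, $\lambda_t(\overline\Omega)=0$, and hence $\lambda_t=0$ for almost every $t$. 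This means $\lambda=0$ in $L^\infty_{\rm w*}(0,T;\mathcal M_+(\overline\Omega))$.

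The only delicate point is the admissibility of the constant test function in the weak-star convergence, i.e.\ ensuring that no mass escapes to the boundary in time. Because the convergence in Lemma~\ref{lem:defect-construction} is in the dual of $L^1(0,T;C(\overline\Omega))$, the function $\Phi\equiv1$ is a genuine test, so total masses do converge. The rest of the argument is standard.
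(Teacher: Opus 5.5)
Your proof is correct and uses the same ingredients as the paper: Vitali's theorem for (i), testing the weak-star convergence with the constant function $1$ together with $\lambda_t\ge0$ for (ii), and the pointwise bound $g\mathbf 1_{\{g>M\}}\le M^{-\delta}g^{1+\delta}$ for (iii). The only difference is that you chain (iii)$\Rightarrow$(i)$\Rightarrow$(ii)$\Rightarrow\lambda=0$. This makes explicit the step from strong $L^1$ convergence to a vanishing defect, which the paper leaves implicit in case (i).
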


\begin{proof}
Condition~\eqref{eq:uniform-integrability}, almost-everywhere convergence,
and Vitali's theorem give
$g_{h,\tau}\to\eps_q\theta^{-q}$ strongly in $L^1(\QT)$, so the
concentration defect is zero. Under
\eqref{eq:singular-mass-convergence}, test the weak-star convergence
\eqref{eq:defect-measure-valued} with the constant function $1$ to obtain
$$
 \int_0^T\lambda_t(\overline\Omega)\dtm=0.
$$
Since $\lambda_t\ge0$ for almost every $t$, this implies
$\lambda=0$. Finally,
\eqref{eq:singular-superlinear-bound} implies uniform integrability either
by the de la Vall\'ee--Poussin criterion
\cite[Preliminary Material, Thm.~14]{FeireislLukacovaMizerovaShe2021}, or
directly from
$$
 g\mathbf 1_{\{g>M\}}\le M^{-\delta}g^{1+\delta}.
$$
\end{proof}

\section{Weak--strong uniqueness and long-time behavior}
\label{sec:wsu}

The compactness argument gives integral control but no mesh-uniform
pointwise separation of the inverse temperature.  We therefore formulate
the stability theorem on a bounded state range.  Under this additional
hypothesis, all entropy-variable remainders are genuinely quadratic, and the
state-dependent mobility can be treated without suppressing any terms.

\subsection{Relative entropy and bounded-range coercivity}
\label{sec:relative-coercivity}

Because $\partial_\theta e<0$ and
$e(z,\theta)\to+\infty$ as $\theta\downarrow0$, whereas
$e(z,\theta)\to-\infty$ as $\theta\to\infty$, the map
$\theta\mapsto e(z,\theta)$ is a $C^2$ bijection from $(0,\infty)$ onto
$\mathbb R$ for every fixed $z$.  We denote its inverse by
$$
 \theta=\Theta(z,\varepsilon),
 \qquad \varepsilon=e(z,\theta),
$$
and introduce the bulk entropy in the energy variables,
$$
 \bar s(\rho,\varepsilon,\eta)
 :=\Sbulk\bigl((\rho,\eta),
       \Theta((\rho,\eta),\varepsilon)\bigr).
$$
The Gibbs identity gives
\begin{equation}
 D_{(\rho,\varepsilon,\eta)}\bar s
 =\bigl(-\partial_\rho\psi,\Theta,-\partial_\eta\psi\bigr).
 \label{eq:entropy-gradient-energy-variables}
\end{equation}
We write
$$
 U=(\rho,e(z,\theta),\eta),
 \qquad
 \widehat U=(\widehat\rho,
 e(\widehat z,\widehat\theta),\widehat\eta).
$$

\begin{definition}[Bounded-range weak solution]
\label{def:bounded-range-weak}
A generalised dissipative weak solution is called $M$-bounded if
\begin{equation}
 |z|\le M,
 \qquad M^{-1}\le\theta\le M
 \quad\text{almost everywhere in }\QT.
 \label{eq:weak-bounded-range}
\end{equation}
\end{definition}

\begin{definition}[Strong comparison solution]
\label{def:strong-comparison}
A strong comparison solution is a defect-free solution
$(\widehat\rho,\widehat\mu_\rho,\widehat\theta,
\widehat\eta,\widehat\mu_\eta)$ satisfying the equations almost everywhere,
the corresponding entropy equality, and the following sufficient
regularity conditions:
\begin{align}
 &\widehat U\in W^{1,1}(0,T;L^\infty(\Omega)^3),
 \qquad
 \widehat z\in W^{1,1}(0,T;H^1(\Omega)^2),
 \label{eq:strong-statevector-regularity}\\
 &\widehat\theta\in W^{1,1}(0,T;W^{1,\infty}(\Omega)),
 \qquad
 \widehat z\in L^\infty(0,T;W^{1,\infty}(\Omega)^2),
 \label{eq:strong-state-spatial-regularity}\\
 &\widehat\mu_\rho
 \in W^{1,1}(0,T;H^1(\Omega))
       \cap L^\infty(0,T;W^{1,\infty}(\Omega)),
 \label{eq:strong-murho-regularity}\\
 &\widehat\mu_\eta
 \in W^{1,1}(0,T;L^2(\Omega))\cap L^\infty(\QT).
 \label{eq:strong-mueta-regularity}
\end{align}
We also assume that, for some $M>1$,
\begin{equation}
 |\widehat z|\le M,
 \qquad M^{-1}\le\widehat\theta\le M
 \quad\text{on }\overline\Omega\times[0,T].
 \label{eq:strong-bounded-range}
\end{equation}
These assumptions are not intended to be optimal; they ensure that all
strong entropy variables are admissible in the weak formulation and that
pairings with the defect measure are classical.
\end{definition}

For a reference state, define
\begin{align}
 \mathcal H_0(U\mid\widehat U)
 :={}&\int_\Omega
 \bigl[\bar s(\widehat U)-\bar s(U)
       +D\bar s(\widehat U)\cdot(U-\widehat U)\bigr]\dx
+\frac{\gamma_\rho}{2}
 \norm{\nabla(\rho-\widehat\rho)}_{L^2}^2
 +\frac{\gamma_\eta}{2}
 \norm{\nabla(\eta-\widehat\eta)}_{L^2}^2,
 \label{eq:relative-entropy-energy-variables}\\
 \mathcal H_\alpha(U\mid\widehat U)
 :={}&\mathcal H_0(U\mid\widehat U)
 +\frac\alpha2\norm{\rho-\widehat\rho}_{L^2}^2
 +\frac\alpha2\norm{\eta-\widehat\eta}_{L^2}^2.
 \label{eq:relative-entropy-penalized}
\end{align}
The bulk term is the Bregman remainder of the negative entropy in energy
variables.  Equivalently, in the original variables it is the Bregman
remainder of the availability
$\widehat\theta e-\Sbulk$ with $a=\widehat\theta$.
There is no term linear in $\theta-\widehat\theta$, because
$\partial_\theta W_{\widehat\theta}
 (\widehat z,\widehat\theta)=0$.

 \begin{remark}
The same relative entropy argument in original variables is considered in \cite{BrunkHabrichOyedejiYangXu}, which indicates that the assumption necessary for the transformation between temperature and internal energy can be weakened.
 \end{remark}

\begin{lemma}[Bounded-range coercivity]
\label{lem:relative-coercivity}
Fix $M>1$.  There are $\alpha_M>0$ and $c_M>0$ such that, whenever
both states satisfy \eqref{eq:weak-bounded-range} and
\eqref{eq:strong-bounded-range}, every fixed $\alpha\ge\alpha_M$ admits a
constant $C_{M,\alpha}>0$ for which
\begin{align}
 c_M\bigl(&\norm{z-\widehat z}_{H^1}^2
           +\norm{\theta-\widehat\theta}_{L^2}^2\bigr)
 \le\mathcal H_\alpha(U\mid\widehat U)
\le C_{M,\alpha}\bigl(\norm{z-\widehat z}_{H^1}^2
           +\norm{\theta-\widehat\theta}_{L^2}^2\bigr).
 \label{eq:relative-coercivity}
\end{align}
In particular, with a constant depending only on the compact state range,
\begin{equation}
 \norm{U-\widehat U}_{L^2}^2
 +\norm{z-\widehat z}_{H^1}^2
 \le C_M\mathcal H_\alpha(U\mid\widehat U).
 \label{eq:statevector-difference-controlled}
\end{equation}
\end{lemma}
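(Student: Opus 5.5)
We prove the pointwise two-sided bound for the bulk Bregman integrand on the compact state range, and then add the quadratic gradient and penalty terms. The natural route is to work in the original variables $(z,\theta)$ rather than energy variables. By the remark preceding the lemma, the bulk integrand equals
$$
 B(z,\theta;\widehat z,\widehat\theta)
 :=W_{\widehat\theta}(z,\theta)-W_{\widehat\theta}(\widehat z,\widehat\theta)
 -D_zW_{\widehat\theta}(\widehat z,\widehat\theta)\cdot(z-\widehat z),
 \qquad W_{\widehat\theta}:=\widehat\theta e-\Sbulk .
$$
The derivative of $W_{\widehat\theta}$ in $\theta$ vanishes at $(\widehat z,\widehat\theta)$. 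I would verify this identity directly from \eqref{eq:entropy-gradient-energy-variables}: $D\bar s(\widehat U)=(-\partial_\rho\psi,\widehat\theta,-\partial_\eta\psi)$ at the hat state, and $\Sbulk=\theta e-\psi$.

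\textbf{Step 1 (structure of $W$).} From \eqref{eq:availability-density} with $a=\widehat\theta$,
$$
 W_{\widehat\theta}=\eps_1\theta+\widehat\theta\eps_q\theta^{-q}+\widehat\theta\theta^{-1}-\tfrac{q}{q-1}\eps_q\theta^{1-q}+(1-\widehat\theta\eps_1)\ln\theta-1+\widehat\theta F_1(z)-F_0(z).
$$
This splits into a purely thermal part $w(\theta)$ and a phase part $\widehat\theta F_1-F_0$. Hence $B=B_\theta+B_z$, with no mixed terms except through the frozen coefficient $\widehat\theta$. For the thermal part, $\partial_\theta W_{\widehat\theta}=\widehat\theta\,\partial_\theta e(\theta)+\ldots$. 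Computing via $\partial_\theta\Sbulk=\theta\partial_\theta e$ gives $\partial_\theta W_{\widehat\theta}=(\widehat\theta-\theta)\partial_\theta e$, with $\partial_\theta e$ taken from \eqref{eq:energy-monotonicity}. Therefore
$$
 B_\theta=\int_{\widehat\theta}^{\theta}(\widehat\theta-s)\,\partial_\theta e(s)\,\mathrm ds
 =\int_{\widehat\theta}^{\theta}(s-\widehat\theta)\,|\partial_\theta e(s)|\,\mathrm ds .
$$
On $[M^{-1},M]$ we have $|\partial_\theta e|\in[c,C]$, which gives $B_\theta\simeq|\theta-\widehat\theta|^2$ pointwise, with constants depending only on $M$.

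\textbf{Step 2 (phase part).} $B_z$ is the Taylor remainder of the polynomial $\widehat\theta F_1-F_0$. Its Hessian is bounded by $C_M$ on $\{|z|\le M\}$ by \eqref{eq:phase-growth}, so $|B_z|\le C_M|z-\widehat z|^2$. This part need not be convex, so I use it only through this two-sided bound. The lower bound is where the penalty enters: choose $\alpha_M:=4C_M$. Then
$$
 B_z+\tfrac\alpha2|z-\widehat z|^2\ge\tfrac\alpha4|z-\widehat z|^2 .
$$
Adding the gradient terms $\frac{\gamma_\rho}{2}\|\nabla(\rho-\widehat\rho)\|^2$ and $\frac{\gamma_\eta}{2}\|\nabla(\eta-\widehat\eta)\|^2$ yields the lower bound in \eqref{eq:relative-coercivity} with
$$
 c_M=\min\{\gamma_\rho,\gamma_\eta,\alpha_M,c\}/4 .
$$
The upper bound follows by integrating the pointwise upper bounds and adding $\alpha$ times the $L^2$ terms, giving $C_{M,\alpha}$.

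\textbf{Step 3 (energy variable).} For \eqref{eq:statevector-difference-controlled} it suffices to bound $e(z,\theta)-e(\widehat z,\widehat\theta)$ pointwise. We have
$$
 |e(z,\theta)-e(\widehat z,\widehat\theta)|\le\sup|\partial_\theta e|\,|\theta-\widehat\theta|+\sup|DF_1|\,|z-\widehat z|,
$$
with both suprema taken over the compact range and hence bounded by $C_M$. Squaring and integrating, then applying the lower bound of \eqref{eq:relative-coercivity}, completes the argument.

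\textbf{Main obstacle.} The only delicate point is Step 1: making sure the Bregman remainder in energy variables coincides exactly with the remainder of $W_{\widehat\theta}$ in $(z,\theta)$, i.e., that no linear term in $\theta-\widehat\theta$ survives, so the thermal part is genuinely quadratic. I would check this by differentiating $\bar s(\rho,e(z,\theta),\eta)$ and using $\Theta(\widehat z,e(\widehat z,\widehat\theta))=\widehat\theta$. Everything else is compactness of $[M^{-1},M]\times\{|z|\le M\}$.
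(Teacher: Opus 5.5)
Your argument is correct, but it follows a different route from the paper.

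\textbf{The paper's route.} The paper stays in the energy variables $(z,\varepsilon)$. It writes the Hessian of the penalised integrand in block form and uses $\partial_{\varepsilon\varepsilon}\bar s=1/\partial_\theta e<0$, together with boundedness of the other blocks, to make the Schur complement positive for $\alpha\ge\alpha_M$. It then applies Taylor's formula along segments in the convex hull of $K_M$, which requires smoothness of $\Theta$ on a neighbourhood of that hull. Finally it converts $|z-\widehat z|^2+|\varepsilon-\widehat\varepsilon|^2$ into $|z-\widehat z|^2+|\theta-\widehat\theta|^2$ through the bi-Lipschitz change of variables.

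\textbf{Your route.} You pass to the original variables through the identity with $W_{\widehat\theta}=\widehat\theta e-\Sbulk$, and you verify it correctly, including $\partial_\theta W_{\widehat\theta}=(\widehat\theta-\theta)\partial_\theta e$. You then exploit the additive structure $e=\beta(\theta)+F_1(z)$ from Assumption~\ref{ass:potential}. The bulk remainder splits exactly into the one-dimensional integral $\int_{\widehat\theta}^{\theta}(s-\widehat\theta)|\beta'(s)|\,\mathrm ds$ and a Taylor remainder of the polynomial $\widehat\theta F_1-F_0$, which the penalty absorbs.

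\textbf{What each approach buys.} Your version gives the following.
\begin{itemize}
\item Explicit constants: $\alpha_M$ depends only on the phase Hessian, not on the cross block $D^2_{z\varepsilon}\bar s$.
\item Taylor segments that stay inside the already convex set $\{|z|\le M\}\times[M^{-1},M]$.
\item Direct control of $\theta-\widehat\theta$, with no inverse-function step.
\item An explicit check of the claim, only asserted before the lemma, that no term linear in $\theta-\widehat\theta$ survives.
\end{itemize}
The paper's argument, in turn, uses only concavity of the entropy in the energy variable and smoothness. It therefore does not rely on separability, and it matches the energy-variable framework used afterwards for $\mathcal R_Y$.

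\textbf{Removing separability from your proof.} You can make your approach equally structure-free. Split the remainder as
\[
\bigl[W_{\widehat\theta}(z,\theta)-W_{\widehat\theta}(z,\widehat\theta)\bigr]+\bigl[W_{\widehat\theta}(z,\widehat\theta)-W_{\widehat\theta}(\widehat z,\widehat\theta)-D_zW_{\widehat\theta}(\widehat z,\widehat\theta)\cdot(z-\widehat z)\bigr].
\]
For the first bracket, $\partial_\theta W_{\widehat\theta}(z,s)=(\widehat\theta-s)\partial_\theta e(z,s)$ holds for every $z$, so it is a one-dimensional integral with the same quadratic bounds. The second bracket is simply the $z$-Taylor remainder of $\psi(\cdot,\widehat\theta)$, which the penalty absorbs as before.
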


\begin{proof}
Let
$$
 K_M=\{(z,e(z,\theta)):|z|\le M,
             \ M^{-1}\le\theta\le M\}.
$$
The convex hull of $K_M$ is compact in the energy variables.  Since the
inverse map $(z,\varepsilon)\mapsto(z,\Theta(z,\varepsilon))$ is smooth,
all first and second derivatives used below are bounded on a compact
neighbourhood of this convex hull.  Moreover,
\begin{equation}
 \partial_{\varepsilon\varepsilon}\bar s
 =\partial_\varepsilon\Theta
 =\frac1{\partial_\theta e}<0,
 \label{eq:entropy-ee}
\end{equation}
and $-\partial_{\varepsilon\varepsilon}\bar s$ is bounded above and away
from zero on that neighbourhood.

In block form, the Hessian of the penalised bulk integrand is
$$
 \begin{pmatrix}
  -D_{zz}^2\bar s+\alpha I&-D_{z\varepsilon}^2\bar s\\
  -D_{\varepsilon z}^2\bar s&
  -\partial_{\varepsilon\varepsilon}\bar s
 \end{pmatrix}.
$$
The lower-right block is uniformly positive.  All other blocks are bounded,
so choosing $\alpha\ge\alpha_M$ makes the Schur complement uniformly
positive.  Taylor's formula with integral remainder along the segment
between $U$ and $\widehat U$ therefore gives a lower bound with a constant
uniform for $\alpha\ge\alpha_M$ and, for each fixed $\alpha$, an upper
bound by a constant depending on $M$ and $\alpha$, both times multiplied by
$|z-\widehat z|^2+|\varepsilon-\widehat\varepsilon|^2$.
The change of variables is bi-Lipschitz on the compact range, hence this
quantity is equivalent to
$|z-\widehat z|^2+|\theta-\widehat\theta|^2$.
Adding the exact positive gradient remainders proves
\eqref{eq:relative-coercivity} and \eqref{eq:statevector-difference-controlled}.
\end{proof}

\subsection{Relative-entropy inequality with defect}
\label{sec:relative-inequality}

Set
$$
 Y(U)=D\bar s(U),
 \qquad
 \mathcal R_Y(U\mid\widehat U)
 =Y(U)-Y(\widehat U)-DY(\widehat U)(U-\widehat U).
$$
The constitutive functions are smooth and their derivatives through third
order are bounded on the compact range.  Taylor's formula therefore gives
\begin{equation}
 |\mathcal R_Y(U\mid\widehat U)|
 \le C_M|U-\widehat U|^2.
 \label{eq:entropy-variable-remainder}
\end{equation}
For notational brevity, $\LL(U)$ below means
$\LL(z,\Theta(z,\varepsilon))$.

\begin{lemma}[Relative-entropy inequality with defect]
\label{lem:relative-exergy-inequality}
Let an $M$-bounded generalised dissipative weak solution be given, and let
$U=(\rho,e(z,\theta),\eta)$ denote its energy-state vector.  Let
$\widehat U$ be the energy-state vector of a strong comparison solution on
the same bounded range, and assume that $\LL$ is $C^1$ on a neighbourhood
of that range.  Then, for
almost every $t\in(0,T)$,
\begin{align}
 &\mathcal H_0(t)
 +\pair{\lambda_t}{\widehat\theta(t)}
 +\int_0^t\!\int_\Omega
 (Z-\widehat Z)^\top\LL(U)(Z-\widehat Z)\dx\,\mathrm ds
 \nonumber\\
 &\quad\le\mathcal H_0(0)
 +\int_0^t\!\int_\Omega
 (Z-\widehat Z)^\top
 \bigl(\LL(\widehat U)-\LL(U)\bigr)\widehat Z
 \dx\,\mathrm ds
 \nonumber\\
 &\qquad
 -\int_0^t\!\int_\Omega
 \mathcal R_Y(U\mid\widehat U)\cdot\partial_t\widehat U
 \dx\,\mathrm ds
 +\int_0^t\pair{\lambda_s}{\partial_t\widehat\theta(s)}\,\mathrm ds.
 \label{eq:relative-exergy-inequality}
\end{align}
Here
$\widehat Z=(\nabla\widehat\mu_\rho,
-\nabla\widehat\theta,\widehat\mu_\eta)$.
\end{lemma}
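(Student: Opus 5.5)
We split $\mathcal H_0$ into three pieces and treat each separately. Write
$$
\mathcal H_0(t)=-\Stot(z(t),\theta(t))+\Stot(\widehat z(t),\widehat\theta(t))
+\int_\Omega Y(\widehat U)\cdot(U-\widehat U)\dx+\text{(interfacial cross terms)}.
$$
The three pieces are: the weak entropy, the strong entropy, and the linear pairing of $U-\widehat U$ with the strong entropy variables
$$
Y(\widehat U)=(-\partial_\rho\psi(\widehat z,\widehat\theta),\,\widehat\theta,\,-\partial_\eta\psi(\widehat z,\widehat\theta)).
$$
Combined with the gradient terms, the phase entries of $Y(\widehat U)$ become $-\widehat\mu_\rho$ and $-\widehat\mu_\eta$ by \eqref{eq:chemical-potentials}. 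The defect enters by replacing $e\,\mathrm dx$ with the total energy measure $\mathfrak E_t$ in the energy component. This accounts for the extra term $\pair{\lambda_t}{\widehat\theta(t)}$ on the left.

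\textbf{The weak and strong entropies.} For the weak solution we use the entropy inequality \eqref{eq:weak-entropy} at almost every $t$. For the strong solution we use its entropy equality. The linear pairing is computed with the product rules of Lemma~\ref{lem:time-representatives}: we test \eqref{eq:rho-product-rule} with $v=\widehat\mu_\rho$, \eqref{eq:energy-product-rule} with $\xi=\widehat\theta$, and \eqref{eq:eta-product-rule} with $w=\widehat\mu_\eta$. The regularity in Definition~\ref{def:strong-comparison} is precisely what makes these tests admissible. In particular, $\widehat\theta\in W^{1,1}(0,T;W^{1,\infty})$ permits the measure pairing, and it produces the term $\int_0^t\pair{\lambda_s}{\partial_t\widehat\theta}$.

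\textbf{The strong part of the pairing.} The strong solution's own contribution, $\int Y(\widehat U)\cdot\widehat U$, is differentiated classically. Its time derivative gives $\partial_t Y(\widehat U)\cdot\widehat U+Y(\widehat U)\cdot\partial_t\widehat U$.

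\textbf{Collecting the terms.} The flux terms assemble into quadratic forms. The weak dissipation gives $Z^\top\LL(U)Z$ and the strong dissipation gives $\widehat Z^\top\LL(\widehat U)\widehat Z$. The cross terms give $-(Z^\top\LL(U)\widehat Z+\widehat Z^\top\LL(U)Z)$, after using the weak fluxes against the strong test functions and the strong fluxes against the weak ones via the strong equations. Completing the square yields the left-hand dissipation $(Z-\widehat Z)^\top\LL(U)(Z-\widehat Z)$. What remains is
$$
\widehat Z^\top\LL(\widehat U)\widehat Z-\widehat Z^\top\LL(U)\widehat Z+\dots,
$$
which together with the cross terms produces $(Z-\widehat Z)^\top(\LL(\widehat U)-\LL(U))\widehat Z$. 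Symmetry of $\LL$ is used at this point.

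\textbf{The time-derivative terms.} These combine as
$$
-\bigl(Y(U)-Y(\widehat U)\bigr)\cdot\partial_t\widehat U+\partial_tY(\widehat U)\cdot(U-\widehat U).
$$
Using $\partial_tY(\widehat U)=DY(\widehat U)\partial_t\widehat U$ and the symmetry of the Hessian $DY=D^2\bar s$, this equals $-\mathcal R_Y(U\mid\widehat U)\cdot\partial_t\widehat U$. To reach this form one must rewrite $\partial_t\bar s(\widehat U)$ as $Y(\widehat U)\cdot\partial_t\widehat U$, which is the chain rule in energy variables \eqref{eq:entropy-gradient-energy-variables}, valid on the bounded range. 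One must also check that the interfacial contributions cancel exactly, namely $\gamma(\nabla\rho,\nabla\partial_t\widehat\rho)$ against the Laplacian part of $\widehat\mu_\rho$. Because the weak phase equations are tested with $\widehat\mu$ rather than with time derivatives, this cancellation needs $\widehat z\in W^{1,1}(0,T;H^1)$ and a product rule for $(\nabla\rho,\nabla\widehat\rho)$ obtained from \eqref{eq:weak-murho} tested with $\partial_t\widehat\rho$.

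\textbf{Main obstacle.} I expect the delicate step to be this last bookkeeping: converting the weak constitutive relations \eqref{eq:weak-murho}--\eqref{eq:weak-mueta}, which hold only in integrated form, into the pointwise-in-time cross terms. It must be done with the $M$-boundedness guaranteeing the integrability of every nonlinear term, while keeping the sign of the defect terms. Positivity of $\widehat\theta$ ensures that $\pair{\lambda_t}{\widehat\theta}\ge0$ can stay on the left.
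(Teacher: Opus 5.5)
Your proposal is correct and follows essentially the paper's route. It uses the same augmented functional $\mathcal H_0+\pair{\lambda_t}{\widehat\theta}$, written through $\mathfrak E_t$ and the pairings with $(\widehat\mu_\rho,\widehat\theta,\widehat\mu_\eta)$, the three product rules together with the weak entropy inequality and strong entropy equality, and Hessian symmetry of $D^2\bar s$ to produce $-\mathcal R_Y(U\mid\widehat U)\cdot\partial_t\widehat U$, with the interfacial terms cancelling via the weak constitutive relation tested by $\partial_t\widehat z$; it then uses the strong Onsager duality and the quadratic identity. One bookkeeping correction: the cross term from the strong equation is $\int(y-\widehat y)\cdot\partial_t\widehat U=\int(Z-\widehat Z)^\top\LL(\widehat U)\widehat Z$, so it carries $\LL(\widehat U)$ rather than $\LL(U)$, and this mismatch is exactly what produces the mobility remainder $(Z-\widehat Z)^\top\bigl(\LL(\widehat U)-\LL(U)\bigr)\widehat Z$.
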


\begin{proof}
Let $\mathscr S[U]$ denote the total entropy, including the two negative
Dirichlet energies, and let
$$
 y=\frac{\delta\mathscr S}{\delta U}
   =(-\mu_\rho,\theta,-\mu_\eta),
 \qquad
 \widehat y=(-\widehat\mu_\rho,
              \widehat\theta,-\widehat\mu_\eta).
$$
With
\begin{equation}
 \mathcal B(v_1,v_2,v_3)=(\nabla v_1,\nabla v_2,v_3),
 \qquad
 \mathcal B^*(w_1,w_2,w_3)
 =(-\diver w_1,-\diver w_2,w_3),
 \label{eq:B-operator}
\end{equation}
we have $\mathcal B y=-Z$, and the balance laws take the Onsager form
\begin{equation}
 \partial_t U=\mathcal B^*\LL(U)\mathcal B y.
 \label{eq:onsager-operator-form}
\end{equation}
For the weak solution this identity is understood with the energy component
replaced by the measure $\mathfrak E$.
For almost every $t$, define the augmented relative functional through
the continuous total-energy representative by
\begin{align*}
\mathcal G(t)
 :={}&\mathscr S[\widehat U(t)]-\mathscr S[U(t)]
 +(-\widehat\mu_\rho(t),\rho(t)-\widehat\rho(t))\\
 &-(\widehat\mu_\eta(t),\eta(t)-\widehat\eta(t))
 +\pair{\mathfrak E_t}{\widehat\theta(t)}-\int_\Omega e(\widehat z(t),\widehat\theta(t))
       \widehat\theta(t)\,\mathrm dx.
\end{align*}
Using
$\mathfrak E_t=e(z(t),\theta(t))\,\mathrm dx+\lambda_t$ for almost every
$t$, this is precisely
$$
 \mathcal G(t)=\mathcal H_0(t)
 +\pair{\lambda_t}{\widehat\theta(t)}.
$$
The value at $t=0$ is understood through
$\mathfrak E_0=e(z_0,\theta_0)\,\mathrm dx$. All measure pairings are
well defined by Lemma~\ref{lem:time-representatives}. Apply the three product rules
\eqref{eq:rho-product-rule}--\eqref{eq:eta-product-rule} with the strong
entropy variables and subtract the corresponding strong identities.  The
regularity in Definition~\ref{def:strong-comparison} permits these tests
directly.  We obtain
\begin{align}
 \mathcal G(t)-\mathcal G(0)
 &\le
 -\int_0^t\!\int_\Omega Z^\top\LL(U)Z\dx\,\mathrm ds
 +\int_0^t\!\int_\Omega
    \widehat Z^\top\LL(U)Z\dx\,\mathrm ds
 \nonumber\\
 &\quad
 +\int_0^t\!\int_\Omega
    \partial_s\widehat y\cdot(U-\widehat U)\dx\,\mathrm ds
 +\int_0^t\pair{\lambda_s}{\partial_s\widehat\theta(s)}\,\mathrm ds.
 \label{eq:integrated-relative-preidentity}
\end{align}
The inequality, rather than equality, occurs only because the weak solution
satisfies an entropy inequality.  In deriving this formula, the strong
entropy production cancels exactly with the term
$-\langle\widehat y,\partial_t\widehat U\rangle$ from the differentiated
cross pairing.

We next transform the term containing $\partial_t\widehat y$.  Write
$Y=D\bar s$ for the bulk entropy variables.  With
$\delta z=z-\widehat z$ and $\delta U=U-\widehat U$, the two sides below
are understood through the identities
\begin{align*}
 \int_\Omega\partial_t\widehat y\cdot\delta U\dx
 ={}&\int_\Omega\partial_tY(\widehat U)\cdot\delta U\dx
 -\gamma_\rho(\nabla\partial_t\widehat\rho,\nabla\delta\rho)
 -\gamma_\eta(\nabla\partial_t\widehat\eta,\nabla\delta\eta),\\
 \int_\Omega(y-\widehat y)\cdot\partial_t\widehat U\dx
 ={}&\int_\Omega(Y(U)-Y(\widehat U))\cdot\partial_t\widehat U\dx
 -\gamma_\rho(\nabla\delta\rho,\nabla\partial_t\widehat\rho)
 -\gamma_\eta(\nabla\delta\eta,\nabla\partial_t\widehat\eta).
\end{align*}
All terms are integrable under Definition~\ref{def:strong-comparison}.
Since
$\partial_tY(\widehat U)=DY(\widehat U)\partial_t\widehat U$ and
$DY=D^2\bar s$ is symmetric, Taylor's formula gives
\begin{equation}
 \int_\Omega\partial_t\widehat y\cdot(U-\widehat U)\dx
 =\int_\Omega(y-\widehat y)\cdot\partial_t\widehat U\dx
 -\int_\Omega\mathcal R_Y(U\mid\widehat U)
       \cdot\partial_t\widehat U\dx.
 \label{eq:hessian-symmetry-step}
\end{equation}
The interfacial terms have no remainder because the Dirichlet part of the
entropy is quadratic.  The first term on the right is then evaluated in
Onsager duality.  Using the strong equation
$\partial_t\widehat U=\mathcal B^*\LL(\widehat U)\mathcal B\widehat y$
and periodic integration by parts,
\begin{equation}
 \int_\Omega(y-\widehat y)\cdot\partial_t\widehat U\dx
 =\int_\Omega
 (Z-\widehat Z)^\top\LL(\widehat U)\widehat Z\dx.
 \label{eq:strong-cross-onsager}
\end{equation}
The last pairing is well defined because
$Z-\widehat Z\in L^2(\QT)$ and $\widehat Z\in L^\infty(\QT)$.

Substituting \eqref{eq:hessian-symmetry-step} and
\eqref{eq:strong-cross-onsager} into
\eqref{eq:integrated-relative-preidentity}, and using
$$
 -Z^\top\LL(U)Z+\widehat Z^\top\LL(U)Z
 +(Z-\widehat Z)^\top\LL(\widehat U)\widehat Z
 =-(Z-\widehat Z)^\top\LL(U)(Z-\widehat Z)
 +(Z-\widehat Z)^\top
   (\LL(\widehat U)-\LL(U))\widehat Z,
$$
gives \eqref{eq:relative-exergy-inequality}.  Finally,
$\mathfrak E_0=e(z_0,\theta_0)\,\mathrm dx$ by
Lemma~\ref{lem:time-representatives}; hence there is no initial defect
contribution.  This completes the rigorous weak--strong calculation.
\end{proof}

\subsection{Proof of weak--strong uniqueness}
\label{sec:wsu-proof}

\begin{theorem}[Bounded-range WSU]
\label{thm:bounded-range-wsu}
Let an $M$-bounded generalised dissipative weak solution and a strong
comparison solution have the same initial data.  Assume that $\LL$ is $C^1$
on a neighbourhood of their common bounded state range.  Then the two
solutions coincide almost everywhere in $\QT$, and $\lambda=0$.
\end{theorem}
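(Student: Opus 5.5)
The plan is to start from the relative-entropy inequality of Lemma~\ref{lem:relative-exergy-inequality} and close a Gronwall argument for the augmented functional
\[
\mathcal G(t)=\mathcal H_0(t)+\pair{\lambda_t}{\widehat\theta(t)}.
\]
Since the initial data coincide and $\mathfrak E_0=e(z_0,\theta_0)\,\mathrm dx$, we have $\mathcal H_0(0)=0$. There is no initial defect contribution. Because $\lambda_t\ge0$ and $\widehat\theta\ge M^{-1}$, the defect term satisfies $\pair{\lambda_t}{\widehat\theta(t)}\ge M^{-1}\lambda_t(\overline\Omega)\ge0$. The target is therefore a bound of the form
\[
\mathcal G(t)+\tfrac{\lambda_0}{2}\int_0^t\norm{Z-\widehat Z}_{L^2}^2\,\mathrm ds\le\int_0^t a(s)\,\mathcal G_\alpha(s)\,\mathrm ds,
\]
with $a\in L^1(0,T)$. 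Here $\mathcal G_\alpha:=\mathcal H_\alpha+\pair{\lambda}{\widehat\theta}$ adds the penalisation from \eqref{eq:relative-entropy-penalized}, so that the coercivity of Lemma~\ref{lem:relative-coercivity} is available.

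First, I will treat the right-hand side of \eqref{eq:relative-exergy-inequality} term by term.

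\emph{Mobility term.} $\LL$ is $C^1$ on the compact range and $\widehat Z\in L^\infty$ by Definition~\ref{def:strong-comparison}. Hence
\[
|(\LL(\widehat U)-\LL(U))\widehat Z|\le C_M|U-\widehat U|.
\]
Young's inequality absorbs half of the dissipation on the left via \eqref{eq:L-ellipticity}. The remainder $C\norm{U-\widehat U}_{L^2}^2$ is bounded by $C\mathcal H_\alpha$ through \eqref{eq:statevector-difference-controlled}.

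\emph{Remainder term.} By \eqref{eq:entropy-variable-remainder}, this term is bounded by $C_M\norm{\partial_t\widehat U(s)}_{L^\infty}\norm{U-\widehat U}_{L^2}^2$. The factor $\norm{\partial_t\widehat U}_{L^\infty}$ is integrable in time by \eqref{eq:strong-statevector-regularity}.

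\emph{Defect term.} We have
\[
\pair{\lambda_s}{\partial_t\widehat\theta}\le\norm{\partial_t\widehat\theta(s)}_{L^\infty}\lambda_s(\overline\Omega)\le M\norm{\partial_t\widehat\theta(s)}_{L^\infty}\pair{\lambda_s}{\widehat\theta(s)},
\]
which again carries an $L^1(0,T)$ weight by \eqref{eq:strong-state-spatial-regularity}.

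Second, I will upgrade $\mathcal H_0$ to $\mathcal H_\alpha$ by estimating the evolution of the penalty $\frac\alpha2(\norm{\rho-\widehat\rho}_{L^2}^2+\norm{\eta-\widehat\eta}_{L^2}^2)$. Subtracting the product rules \eqref{eq:rho-product-rule}, \eqref{eq:eta-product-rule} from the strong equations, and using the flux differences
\[
\mathcal J_\rho-\widehat{\mathcal J}_\rho=\LL(U)(Z-\widehat Z)_{\rho}+(\LL(U)-\LL(\widehat U))\widehat Z,
\]
and similarly for $\mathcal R_\eta$, gives the following bounds. The $\eta$ part is bounded by $\epsilon\norm{Z-\widehat Z}^2+C_\epsilon\norm{U-\widehat U}^2$. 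The $\rho$ part, tested with $\rho-\widehat\rho$, gives $(\mathcal J_\rho-\widehat{\mathcal J}_\rho,\nabla(\rho-\widehat\rho))$, which is controlled the same way using the $H^1$ part of \eqref{eq:statevector-difference-controlled}. Choosing $\epsilon$ small relative to $\lambda_0/\alpha$ keeps a positive dissipation on the left.

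I expect this to be the main technical obstacle. Since $\alpha\ge\alpha_M$ is fixed before $\epsilon$, the absorbed dissipation fraction must be chosen after $\alpha$, and all constants must remain integrable. The justification of the energy identity for $\norm{\rho-\widehat\rho}^2$ at the weak level also requires the Hilbert-space chain rule from Lemma~\ref{lem:time-representatives}.

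Adding the two inequalities yields $\mathcal G_\alpha(t)\le\int_0^t a(s)\mathcal G_\alpha(s)\,\mathrm ds$ for a.e.\ $t$. Gronwall's lemma then gives $\mathcal G_\alpha\equiv0$. Coercivity \eqref{eq:relative-coercivity} gives $z=\widehat z$ and $\theta=\widehat\theta$ a.e. Finally, $\pair{\lambda_t}{\widehat\theta}\ge M^{-1}\lambda_t(\overline\Omega)=0$ gives $\lambda=0$. The chemical potentials then agree via \eqref{eq:weak-murho}--\eqref{eq:weak-mueta}.
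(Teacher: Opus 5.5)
Your proposal is correct and follows essentially the same route as the paper: start from the relative-entropy inequality, estimate the $L^2$ penalty $\frac\alpha2(\norm{\rho-\widehat\rho}_{L^2}^2+\norm{\eta-\widehat\eta}_{L^2}^2)$ separately via the chain rule, absorb via Young into $\lambda_0\norm{Z-\widehat Z}_{L^2}^2$, bound $\pair{\lambda_s}{\partial_t\widehat\theta}$ by an $L^1$-in-time weight times $\pair{\lambda_s}{\widehat\theta}$, and apply Gronwall to $\mathcal H_\alpha+\pair{\lambda_t}{\widehat\theta(t)}$. The paper writes that weight as $\norm{\partial_t\ln\widehat\theta}_{L^\infty}$, which is equivalent to your $M\norm{\partial_t\widehat\theta}_{L^\infty}$ on the bounded range. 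One small notational slip: the flux difference should involve the $\rho$-row of $\LL(U)(Z-\widehat Z)$, not $\LL(U)$ applied to the $\rho$-component of $Z-\widehat Z$, since $\LL$ is non-diagonal; this does not affect your estimate.
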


\begin{proof}
Fix $\alpha\ge\alpha_M$ from
Lemma~\ref{lem:relative-coercivity} and set
$$
 P(t)=\frac\alpha2\norm{\rho-\widehat\rho}_{L^2}^2
     +\frac\alpha2\norm{\eta-\widehat\eta}_{L^2}^2.
$$
The balance laws and Lemma~\ref{lem:time-representatives} imply
$\partial_t(\rho-\widehat\rho)\in L^2(0,T;H^{-1})$ and
$\partial_t(\eta-\widehat\eta)\in L^2(\QT)$; the standard
$H^1$--$L^2$--$H^{-1}$ chain rule therefore shows that $P$ is absolutely
continuous.  Put
$$
 V=(\nabla(\rho-\widehat\rho),0,\eta-\widehat\eta),
 \qquad
 \Delta J=\LL(U)Z-\LL(\widehat U)\widehat Z,
 \qquad
 K_{\widehat Z}=\norm{\widehat Z}_{L^\infty(\QT)}.
$$
Subtracting the two phase balances and integrating by parts in space gives
for almost every time
\begin{align}
 P'(t)
 &=-\alpha\int_\Omega V\cdot\Delta J\dx
 =-\alpha\int_\Omega
 V\cdot\LL(U)(Z-\widehat Z)\dx
 -\alpha\int_\Omega
 V\cdot(\LL(U)-\LL(\widehat U))\widehat Z\dx.
 \label{eq:penalty-derivative}
\end{align}
Local Lipschitz continuity of $\LL$, boundedness of $\widehat Z$, Young's
inequality, and \eqref{eq:statevector-difference-controlled} imply, for every
$\delta>0$,
\begin{equation}
 |P'(t)|
 \le\delta\norm{Z-\widehat Z}_{L^2}^2
 +C_{\delta,M,\alpha,K_{\widehat Z}}\mathcal H_\alpha(t).
 \label{eq:penalty-bound}
\end{equation}

The mobility remainder in
\eqref{eq:relative-exergy-inequality} satisfies
\begin{align}
 \left|(Z-\widehat Z)^\top
 (\LL(\widehat U)-\LL(U))\widehat Z\right|
 \le\delta|Z-\widehat Z|^2
 +C_{\delta,M,K_{\widehat Z}}|U-\widehat U|^2.
 \label{eq:mobility-remainder-bound}
\end{align}
Likewise, \eqref{eq:entropy-variable-remainder} gives
\begin{equation}
 |\mathcal R_Y(U\mid\widehat U)
       \cdot\partial_t\widehat U|
 \le C_M\norm{\partial_t\widehat U}_{L^\infty}
       |U-\widehat U|^2.
 \label{eq:entropy-remainder-bound}
\end{equation}
Finally, positivity of $\lambda$ and $\widehat\theta$ yields
\begin{equation}
 \pair{\lambda_s}{\partial_t\widehat\theta(s)}
 \le\norm{\partial_t\ln\widehat\theta(s)}_{L^\infty}
       \pair{\lambda_s}{\widehat\theta(s)}.
 \label{eq:defect-relative-bound}
\end{equation}

Add the integral of \eqref{eq:penalty-bound} to
\eqref{eq:relative-exergy-inequality}, choose $\delta$ small relative to
$\lambda_0$, and define, for almost every $t$,
$$
 \Phi(t)=\mathcal G(t)
 +\frac{\alpha}{2}\norm{\rho-\widehat\rho}_{L^2}^2
 +\frac{\alpha}{2}\norm{\eta-\widehat\eta}_{L^2}^2.
$$
Equivalently,
$\Phi(t)=\mathcal H_\alpha(t)
+\pair{\lambda_t}{\widehat\theta(t)}$ for almost every $t$.
Equations \eqref{eq:statevector-difference-controlled}--
\eqref{eq:defect-relative-bound} give
\begin{equation}
 \Phi(t)+c\int_0^t\norm{Z-\widehat Z}_{L^2}^2\,\mathrm ds
 \le \Phi(0)+\int_0^tG(s)\Phi(s)\,\mathrm ds,
 \label{eq:relative-gronwall}
\end{equation}
where
$$
 G(s)=C_*\Bigl(1+\norm{\partial_t\widehat U(s)}_{L^\infty}
 +\norm{\partial_t\ln\widehat\theta(s)}_{L^\infty}\Bigr)
 \in L^1(0,T),
$$
and $C_*$ depends only on the compact state range, the fixed penalty
parameter $\alpha$, the local $C^1$ bound for $\LL$, and
$K_{\widehat Z}$.
The common initial data and the defect-free initial trace give $\Phi(0)=0$.
Gronwall's lemma yields $\Phi=0$.  Coercivity then gives
$z=\widehat z$ and $\theta=\widehat\theta$ almost everywhere.  The two
constitutive relations imply equality of the chemical potentials.  Finally,
$$
 0=\pair{\lambda_t}{\widehat\theta(t)}
 \ge M^{-1}\lambda_t(\overline\Omega)
$$
for almost every $t$, and hence $\lambda=0$.
\end{proof}

\begin{remark}
The bounded-range assumption on the weak solution is a localisation
hypothesis used in the present proof.  On a common compact thermodynamic
range it yields the local coercivity of the penalised relative entropy, the
quadratic Taylor estimate for the entropy variables, and the local
Lipschitz estimate for the state-dependent Onsager operator.

A possible route to removing the pointwise boundedness of the weak solution
is an essential--residual decomposition relative to the compact range of
the strong solution.  This strategy is standard in relative-energy
arguments for compressible thermodynamic systems; see, for example,
\cite[Sect.~5.4.1 and the proof of Thm.~6.2, in particular
(6.26)--(6.28)]{FeireislLukacovaMizerovaShe2021}.  On the essential region
one uses the same local quadratic estimates as above, whereas on the
residual region the nonlinear remainders have to be controlled directly by
the coercive growth of the relative energy.

For the constitutive law considered here, such a global residual estimate
requires additional work.  Indeed, the bulk relative entropy associated
with a comparison inverse temperature $\widehat\theta$ contains the phase
contribution
$\widehat\theta F_1(z)-F_0(z)$,
whereas Assumption~\ref{ass:potential} guarantees quartic coercivity only
for $aF_1-F_0$ with one fixed coefficient $a$.  Moreover, the phase
components of the entropy variable satisfy
$-D_z\psi(z,\theta)=DF_0(z)-\theta DF_1(z)$,
and therefore exhibit mixed growth of order
$(1+\theta)(1+|z|^3)$.  Establishing an essential--residual relative-entropy
estimate for these terms would require exploiting the precise constitutive
structure rather than the local Taylor bounds used above.  We therefore
retain the bounded-range assumption in the present theorem.  Its removal
is a natural extension of the analysis.
\end{remark}

\subsection{Long-time behaviour and stationary states}
\label{sec:long-time}

We conclude with a long-time consequence of the entropy--availability
structure.  The basic mechanism goes back to the Lyapunov analysis of
Alt and Pawlow; compare \cite[Proposition~2.6]{alt1991mathematical} and
\cite[Section~3.4]{alt1992mathematical}.  Their convergence-to-one-state
results use an additional isolation and local stability hypothesis.  At the
energy regularity considered here, the unconditional conclusion is the
existence of stationary $\omega$-limit states.

By a global generalised dissipative weak solution we mean a sextuple
whose restriction to every finite interval $(0,T)$ is a solution in the
sense of Definition~\ref{def:weak-solution}, with compatible time
representatives on overlapping intervals.

\begin{corollary}[Existence of global trajectories]
\label{cor:global-weak-trajectory}
Under the hypotheses of Theorem~\ref{thm:convergence}, there exists at
least one global generalised dissipative weak solution on $(0,\infty)$.
\end{corollary}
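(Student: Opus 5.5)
The plan is to construct the global trajectory directly from the discrete scheme on an unbounded time grid, with a diagonal extraction over the time horizons $T_k=k$, $k\in\mathbb N$. Fix a mesh family and time steps $\tau_j\to0$ with $h_j\to0$, chosen so that every integer $k$ is a grid point, for instance $\tau_j=1/j$. Starting from the initial approximations of Lemma~\ref{lem:initial-approximation}, repeated application of Theorem~\ref{thm:one-step-existence} produces a single positive discrete trajectory $(\rho_h^n,\mu_{\rho,h}^n,\theta_h^n,\eta_h^n,\mu_{\eta,h}^n)$ for all $n\in\mathbb N$. Its restriction to $[0,k]$ is a discrete solution on that horizon. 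We deliberately avoid building separate solutions on each $(0,T)$, because Theorem~\ref{thm:convergence} gives no uniqueness, so separately constructed solutions need not be compatible.

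Next we fix $k$. By \eqref{eq:availability-dissipation}, the availability is non-increasing for all $n$. The constants in Theorem~\ref{thm:uniform-estimates} and Proposition~\ref{prop:discrete-time-derivatives} depend on $T=k$ only through the time sums, and the state bounds are even uniform in $k$. Theorem~\ref{thm:conv_to_limit} and the proof of Theorem~\ref{thm:convergence} therefore apply on $(0,k)$. We extract a subsequence converging on $(0,1)$, extract from it a further subsequence converging on $(0,2)$, and so on, and then take the diagonal sequence. Along this diagonal sequence all convergences of Theorem~\ref{thm:conv_to_limit} hold on every $(0,k)$.

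The limits obtained on $(0,k)$ and $(0,k+1)$ agree a.e.\ on $(0,k)$. For the functions this holds because the $L^2$ and weak limits restrict. For the defect it holds because testing \eqref{eq:main-defect-convergence} on $(0,k+1)$ with $\Phi\in L^1(0,k;C(\overline\Omega))$ extended by zero shows that the restriction of $\lambda^{(k+1)}$ equals $\lambda^{(k)}$ in $L^\infty_{\rm w*}(0,k;\mathcal M)$. These compatible objects glue into a sextuple on $(0,\infty)$ whose restriction to each $(0,k)$ is a generalised dissipative weak solution. Restriction to an arbitrary $T\le k$ is immediate, since test functions vanishing at $T$ extend by zero, and the entropy inequality holds for a.e.\ $t<T$.

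For the representatives, Lemma~\ref{lem:time-representatives} on each $(0,k)$ gives continuous representatives of $\rho$, $\eta$ and $\mathfrak E$. Continuous functions that agree a.e.\ agree everywhere on $[0,k]$, so these representatives are compatible on overlaps.

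The main obstacle is the defect measure. We must ensure the diagonal weak-star limits are consistent and that $\lambda$ remains fibrewise non-negative on all of $(0,\infty)$, even though $L^\infty_{\rm w*}(0,\infty;\mathcal M)$ is not used as a single space. We handle this entirely through the restrictions described above, together with the uniqueness of the fibre representative up to null sets stated in Lemma~\ref{lem:defect-construction}. The bound $\operatorname*{ess\,sup}_t\lambda_t(\overline\Omega)\le C$ is in fact independent of $k$, because the availability bound is global in time.
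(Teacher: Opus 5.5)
Your proposal is correct and follows essentially the same route as the paper. Both arguments build a single positive discrete trajectory on the unbounded grid, extract nested subsequences on $(0,m)$ and pass to the diagonal subsequence, get agreement of the limits (including the defect $\lambda$) from the nesting, and take compatible continuous representatives from Lemma~\ref{lem:time-representatives}. Your extra details, namely choosing $\tau_j=1/j$ so that every integer horizon is a grid point and checking restriction to non-integer $T$ by extending test functions by zero, are points the paper leaves implicit.
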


\begin{proof}
Choose any sequences $h_k\downarrow0$ and $\tau_k\downarrow0$.  For each
$k$, the one-step existence theorem may be iterated for all $n\ge0$, thereby
producing a positive discrete trajectory on $[0,\infty)$.  The state and
dissipation estimates are uniform in the terminal time, whereas the remaining
space--time bounds are uniform on each fixed bounded interval.  Apply the
compactness and limit arguments of Section~\ref{sec:existence} first on
$(0,1)$ and then successively on $(0,m)$, $m\in\mathbb N$, each time
extracting a subsequence of the preceding one.  The diagonal subsequence
converges on every bounded time interval.  Since the subsequences are nested,
the limits on overlapping intervals agree.  The same diagonal extraction is
performed for the critical-energy measures.  The weakly continuous time
representatives are then constructed from the global distributional balances
as in Lemma~\ref{lem:time-representatives}, and their restrictions are
compatible.  The resulting family is a global solution.
\end{proof}

\begin{definition}[Stationary state]
\label{def:stationary-state}
A stationary state is a quadruple
$(\rho_\infty,\eta_\infty,\theta_\infty,\mu_{\rho,\infty})$ with
$$
 \rho_\infty,\eta_\infty\in H^1(\Omega),
 \qquad \theta_\infty>0,
 \qquad \mu_{\rho,\infty}\in\mathbb R,
$$
such that, with $z_\infty=(\rho_\infty,\eta_\infty)$,
\begin{align}
 \gamma_\rho(\nabla\rho_\infty,\nabla v)
 +(\partial_\rho\psi(z_\infty,\theta_\infty),v)
 &=\mu_{\rho,\infty}\int_\Omega v\,\mathrm dx,
 &&v\in H^1(\Omega),
 \label{eq:stationary-rho}\\
 \gamma_\eta(\nabla\eta_\infty,\nabla w)
 +(\partial_\eta\psi(z_\infty,\theta_\infty),w)
 &=0,
 &&w\in H^1(\Omega),
 \label{eq:stationary-eta}\\
 \int_\Omega\rho_\infty\,\mathrm dx
 &=\int_\Omega\rho_0\,\mathrm dx.
 \label{eq:stationary-mass}
\end{align}
The inverse temperature is spatially constant, so
$Z_\infty=(\nabla\mu_{\rho,\infty},-\nabla\theta_\infty,0)=0$ and all three
Onsager fluxes vanish.
\end{definition}

\begin{lemma}[Finite total dissipation and uniform orbit bounds]
\label{lem:long-time-dissipation}
Let $(\rho,\mu_\rho,\theta,\eta,\mu_\eta,\lambda)$ be a global generalised
dissipative weak solution.  Then
\begin{equation}
 \int_0^\infty\!\!\int_\Omega
 Z^\top\LL(z,\theta)Z\,\mathrm dx\,\mathrm dt<\infty,
 \label{eq:finite-total-dissipation}
\end{equation}
and, for almost every $t>0$,
\begin{align}
 &\|z(t)\|_{H^1}^2+\|z(t)\|_{L^4}^4
 +\|\theta(t)\|_{L^1}
 +\|\theta(t)^{-q}\|_{L^1}
 +\|\lambda_t\|_{\mathcal M(\overline\Omega)}\le C.
 \label{eq:uniform-long-time-orbit}
\end{align}
In particular,
\begin{equation}
 \int_0^\infty
 \bigl(
 \|\nabla\mu_\rho\|_{L^2}^2
 +\|\nabla\theta\|_{L^2}^2
 +\|\mu_\eta\|_{L^2}^2
 \bigr)\,\mathrm dt<\infty.
 \label{eq:finite-component-dissipation}
\end{equation}
\end{lemma}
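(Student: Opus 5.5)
The idea is to combine the limiting entropy inequality \eqref{eq:weak-entropy} with energy conservation from Theorem~\ref{thm:convergence} (which holds on every finite interval of a global solution by compatibility of the time representatives). Together they give an availability inequality including the defect. For almost every $t>0$, write $\mathfrak E_t(\overline\Omega)=\int_\Omega e(z(t),\theta(t))\dx+\lambda_t(\overline\Omega)=\int_\Omega e(z_0,\theta_0)\dx$. Multiply this by the fixed $a$ from \eqref{eq:phase-availability-coercivity} and subtract \eqref{eq:weak-entropy}. This yields
\begin{equation*}
 \Avail_a(z(t),\theta(t))+a\lambda_t(\overline\Omega)
 +\int_0^t\!\!\int_\Omega Z^\top\LL(z,\theta)Z\dx\,\mathrm ds
 \le\Avail_a(z_0,\theta_0),
\end{equation*}
and the right-hand side is finite by \eqref{eq:initial-data} and \eqref{eq:e-s-growth}. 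The point is that the defect enters with the favourable sign, because $\lambda_t\ge0$ and $a>0$. Only one time, the endpoint $t$, is involved, so a representative valid for a.e. $t$ on each $(0,m)$ suffices: take the countable union of null sets over $m\in\mathbb N$.

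Next I will bound the left-hand side from below. The availability equals $\int_\Omega(ae-\Sbulk)\dx+\frac{\gamma_\rho}{2}\|\nabla\rho\|^2+\frac{\gamma_\eta}{2}\|\nabla\eta\|^2$. By Lemma~\ref{lem:availability-coercivity} this is at least
\[
 c\bigl(\|\theta\|_{L^1}+\|\theta^{-q}\|_{L^1}+\|z\|_{L^4}^4\bigr)+\tfrac{\gamma_\rho}{2}\|\nabla\rho\|_{L^2}^2+\tfrac{\gamma_\eta}{2}\|\nabla\eta\|_{L^2}^2-C|\Omega|.
\]
The dissipation term is non-negative, so each of these quantities is bounded for almost every $t$ by a constant independent of $t$. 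The $L^2$ part of the $H^1$ norm of $z$ is controlled by the $L^4$ bound on the bounded torus. The defect satisfies $\|\lambda_t\|_{\mathcal M}=\lambda_t(\overline\Omega)\le a^{-1}\bigl(\Avail_a(z_0,\theta_0)+C|\Omega|\bigr)$. This gives \eqref{eq:uniform-long-time-orbit}.

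For \eqref{eq:finite-total-dissipation}, I use that the same inequality bounds the dissipation integral over $(0,t)$ by $\Avail_a(z_0,\theta_0)+C|\Omega|$ for a.e. $t$, uniformly in $t$. Monotone convergence as $t\to\infty$ along such times then gives finiteness on $(0,\infty)$. Finally, \eqref{eq:finite-component-dissipation} follows from the lower ellipticity bound \eqref{eq:L-ellipticity} applied pointwise with $\zeta=Z$, since $|Z|^2=|\nabla\mu_\rho|^2+|\nabla\theta|^2+|\mu_\eta|^2$.

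The main obstacle is the justification of the first step. One must check that the continuous representative $\mathfrak E_t$ coincides with $e(z(t),\theta(t))\,\mathrm dx+\lambda_t$ on a full-measure set of times where \eqref{eq:weak-entropy} also holds. This follows by intersecting the two full-measure sets from Definition~\ref{def:weak-solution} and Lemma~\ref{lem:time-representatives}, and by using the total mass conservation for every $t$. Everything else is a direct application of earlier lemmas.
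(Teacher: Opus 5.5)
Your proposal is correct and follows essentially the same route as the paper: you form the defect-aware availability $a\mathfrak E_t(\overline\Omega)-\Stot(z(t),\theta(t))$ and combine energy conservation with the entropy inequality to get $\mathfrak A_a(t)+\int_0^t\!\int_\Omega Z^\top\LL Z\dx\,\mathrm ds\le\mathfrak A_a(0)$, where the defect enters as $a\lambda_t(\overline\Omega)\ge0$; you then conclude with Lemma~\ref{lem:availability-coercivity}, monotone convergence along admissible times, and the ellipticity bound. One minor point: the lemma concerns an arbitrary global solution, so the conservation identity $\mathfrak E_t(\overline\Omega)=\int_\Omega e(z_0,\theta_0)\dx$ should come from Lemma~\ref{lem:time-representatives} (testing with spatial constants) rather than from Theorem~\ref{thm:convergence}, as your final paragraph in effect already does.
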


\begin{proof}
Let
$$
 E_0:=\int_\Omega e(z_0,\theta_0)\,\mathrm dx
$$
and define, for almost every $t>0$, the defect-aware availability
$$
 \mathfrak A_a(t)
 :=a\mathfrak E_t(\overline\Omega)-\Stot(z(t),\theta(t)).
$$
Energy conservation gives $\mathfrak E_t(\overline\Omega)=E_0$.  Moreover,
using \eqref{eq:total-energy-measure},
\begin{align*}
 \mathfrak A_a(t)
 ={}&\int_\Omega
 \bigl(ae(z(t),\theta(t))-\Sbulk(z(t),\theta(t))\bigr)\,\mathrm dx
 +\frac{\gamma_\rho}{2}\|\nabla\rho(t)\|_{L^2}^2
 +\frac{\gamma_\eta}{2}\|\nabla\eta(t)\|_{L^2}^2
 +a\lambda_t(\overline\Omega).
\end{align*}
The entropy inequality therefore yields
\begin{equation}
 \mathfrak A_a(t)
 +\int_0^t\!\!\int_\Omega Z^\top\LL(z,\theta)Z
 \,\mathrm dx\,\mathrm ds
 \le \mathfrak A_a(0)
 \quad\text{for almost every }t>0.
 \label{eq:global-availability-dissipation}
\end{equation}
By Lemma~\ref{lem:availability-coercivity}, there are constants $c,C>0$
such that, for almost every $t>0$,
$$
 \mathfrak A_a(t)\ge
 c\Bigl(\|z(t)\|_{H^1}^2+\|z(t)\|_{L^4}^4
 +\|\theta(t)\|_{L^1}+\|\theta(t)^{-q}\|_{L^1}
 +\|\lambda_t\|_{\mathcal M(\overline\Omega)}\Bigr)-C.
$$
Since \eqref{eq:global-availability-dissipation} also gives
$\mathfrak A_a(t)\le\mathfrak A_a(0)$, this proves
\eqref{eq:uniform-long-time-orbit}.  Next choose admissible times
$s_k\to\infty$ for which \eqref{eq:global-availability-dissipation}
holds.  The preceding lower bound yields
$$
 \int_0^{s_k}\!\!\int_\Omega Z^\top\LL Z\,\mathrm dx\,\mathrm dt
 \le \mathfrak A_a(0)+C.
$$
Monotone convergence as $k\to\infty$ proves
\eqref{eq:finite-total-dissipation}.  Finally,
Assumption~\ref{ass:Onsager} gives
$$
 Z^\top\LL Z\ge\lambda_0
 \bigl(|\nabla\mu_\rho|^2+|\nabla\theta|^2+|\mu_\eta|^2\bigr),
$$
which proves \eqref{eq:finite-component-dissipation}.
\end{proof}

\begin{theorem}[Stationary $\omega$-limit points]
\label{thm:stationary-omega-limit}
Let $(\rho,\mu_\rho,\theta,\eta,\mu_\eta,\lambda)$ be a global generalised
dissipative weak solution.  Then there exist times $t_j\to\infty$, a
stationary state
$(\rho_\infty,\eta_\infty,\theta_\infty,\mu_{\rho,\infty})$, and a measure
$\Lambda_\infty\in\mathcal M_+(\overline\Omega)$ such that, after
extraction,
\begin{align}
 \rho(t_j)&\to\rho_\infty
 &&\text{strongly in }H^1(\Omega),
 \label{eq:long-time-rho}\\
 \eta(t_j)&\to\eta_\infty
 &&\text{strongly in }H^1(\Omega),
 \label{eq:long-time-eta}\\
 \theta(t_j)&\to\theta_\infty
 &&\text{strongly in }H^1(\Omega),
 \label{eq:long-time-theta}\\
 \mu_\rho(t_j)&\to\mu_{\rho,\infty}
 &&\text{strongly in }H^1(\Omega),
 \label{eq:long-time-murho}\\
 \mu_\eta(t_j)&\to0
 &&\text{strongly in }L^2(\Omega).
 \label{eq:long-time-mueta}
\end{align}
Here $\theta_\infty$ is a positive spatial constant.  In addition, the
total critical-energy measures satisfy
\begin{equation}
 \eps_q\theta(t_j)^{-q}\,\mathrm dx+\lambda_{t_j}
 \stackrel{*}{\rightharpoonup}
 \eps_q\theta_\infty^{-q}\,\mathrm dx+\Lambda_\infty
 \quad\text{in }\mathcal M(\overline\Omega),
 \label{eq:long-time-critical-measure}
\end{equation}
and
\begin{equation}
 \int_\Omega e(z_\infty,\theta_\infty)\,\mathrm dx
 +\Lambda_\infty(\overline\Omega)
 =\int_\Omega e(z_0,\theta_0)\,\mathrm dx.
 \label{eq:stationary-energy-with-defect}
\end{equation}
The measure $\Lambda_\infty$ includes both any pre-existing defect
$\lambda_{t_j}$ and any additional concentration of
$\eps_q\theta(t_j)^{-q}$ along the long-time sequence.
\end{theorem}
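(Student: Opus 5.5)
The plan is to select the times $t_j$ from the finite total dissipation in Lemma~\ref{lem:long-time-dissipation}, extract weak limits using the uniform orbit bounds \eqref{eq:uniform-long-time-orbit}, and then upgrade to strong convergence by passing to the limit in the constitutive equations.

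\textbf{Choice of times.} By \eqref{eq:finite-component-dissipation}, the function
$$
 D(t)=\|\nabla\mu_\rho(t)\|_{L^2}^2+\|\nabla\theta(t)\|_{L^2}^2+\|\mu_\eta(t)\|_{L^2}^2
$$
is integrable on $(0,\infty)$. Hence on every interval $(k,k+1)$ the set of times with $D(t)\le 2\int_k^{k+1}D$ has positive measure. I choose $t_j$ in this set and also inside the full-measure set where all of the following hold: \eqref{eq:uniform-long-time-orbit}, the constitutive relations \eqref{eq:weak-murho}--\eqref{eq:weak-mueta} pointwise in time (obtained by localising the time-integrated identities), the decomposition $\mathfrak E_t=e\,\mathrm dx+\lambda_t$, and mass conservation. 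Along these times $D(t_j)\to0$. This gives \eqref{eq:long-time-mueta} immediately.

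\textbf{Weak and strong limits of the state.} The orbit bounds give $\sup_j\|z(t_j)\|_{H^1}<\infty$, so after extraction $z(t_j)\rightharpoonup z_\infty$ in $H^1$, strongly in $L^p$ for $p<6$, and almost everywhere. For the temperature, the bounds $\|\theta(t_j)\|_{L^1}\le C$ and $\|\nabla\theta(t_j)\|_{L^2}\to0$, together with the $L^1$-Poincar\'e inequality from the proof of Theorem~\ref{thm:uniform-estimates}, bound $\theta(t_j)$ in $H^1$. Therefore $\theta(t_j)\to\theta_\infty$ strongly in $H^1$, and the limit is a constant with $\nabla\theta_\infty=0$. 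Fatou's lemma applied to $\int\theta(t_j)^{-q}\le C$ shows $\theta_\infty>0$.

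For $\mu_\rho$, I test the constitutive relation with $1$. Using \eqref{eq:split-growth} (or directly the growth of $D_z\psi$), the mean of $\mu_\rho(t_j)$ is bounded by $C(1+\|\theta(t_j)\|_{L^2})$, which is bounded. Combined with $\nabla\mu_\rho(t_j)\to0$, the means converge along a subsequence to some $\mu_{\rho,\infty}\in\mathbb R$, and $\mu_\rho(t_j)\to\mu_{\rho,\infty}$ in $H^1$.

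Next I pass to the limit in the two constitutive equations with test functions $v\in H^1$. Since $\partial_z\psi(z,\theta)=-DF_0(z)+\theta DF_1(z)$, the cubic terms converge in $L^{6/5}$ by strong $L^p$ convergence for $p<6$, and $\theta(t_j)$ converges in $L^6$. Together with $\mu_\eta(t_j)\to0$ in $L^2$, this yields \eqref{eq:stationary-rho}--\eqref{eq:stationary-eta}. Mass conservation yields \eqref{eq:stationary-mass}.

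To upgrade to strong $H^1$ convergence of $z$, I subtract the limit equation from the equation at $t_j$ and test with $\rho(t_j)-\rho_\infty$ (respectively $\eta(t_j)-\eta_\infty$):
$$
 \gamma_\rho\|\nabla(\rho(t_j)-\rho_\infty)\|_{L^2}^2
 =(\mu_\rho(t_j)-\mu_{\rho,\infty},\rho(t_j)-\rho_\infty)
 -(\partial_\rho\psi(t_j)-\partial_\rho\psi_\infty,\rho(t_j)-\rho_\infty).
$$
Both terms on the right tend to zero, since the nonlinear differences converge in $L^{6/5}$ and the phase differences are bounded in $L^6$. The same argument applies to $\eta$. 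This proves \eqref{eq:long-time-rho}--\eqref{eq:long-time-eta}.

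\textbf{Energy measure and defect.} This is the delicate step. By \eqref{eq:uniform-long-time-orbit}, the measures $\eps_q\theta(t_j)^{-q}\,\mathrm dx+\lambda_{t_j}$ are bounded in $\mathcal M_+(\overline\Omega)$, so Banach--Alaoglu gives a weak-star limit $\nu$ along a subsequence. Fatou's lemma with the almost-everywhere convergence $\theta(t_j)\to\theta_\infty$ shows $\nu\ge\eps_q\theta_\infty^{-q}\,\mathrm dx$, and I set $\Lambda_\infty:=\nu-\eps_q\theta_\infty^{-q}\,\mathrm dx\ge0$. This is \eqref{eq:long-time-critical-measure}.

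For the energy identity, I use $\mathfrak E_{t_j}(\overline\Omega)=E_0$ and split $e=e_\reg+e_\sing$. The singular part is tested with the constant $1$ and passes to the limit by the weak-star convergence just established. For the regular part, I need
$$
 \int_\Omega e_\reg(z(t_j),\theta(t_j))\,\mathrm dx\to\int_\Omega e_\reg(z_\infty,\theta_\infty)\,\mathrm dx.
$$
Here $\theta(t_j)^{-1}$ is bounded in $L^q$ and therefore uniformly integrable. The negative logarithm is dominated by $C(1+\theta^{-q})^{1/p}$ and is therefore also uniformly integrable. The positive logarithm and $F_1(z)$ converge by strong convergence. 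Vitali's theorem then gives the convergence of the regular part, and combining the two parts yields \eqref{eq:stationary-energy-with-defect}. The main obstacle I expect is this bookkeeping for the critical power, together with making sure that all chosen times lie in a common full-measure set.
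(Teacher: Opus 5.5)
Your proposal is correct and follows essentially the same route as the paper. You pick $t_j\in(j,j+1)$ in a common full-measure set with dissipation at most twice its local average, then pass to the limit in the constitutive relations, upgrade to strong $H^1$ convergence by testing with $z(t_j)-z_\infty$, and obtain $\Lambda_\infty$ via Banach--Alaoglu and Fatou, with Vitali handling the regular energy. The only deviations are cosmetic: you identify the constant limit $\theta_\infty$ through an $H^1$ bound plus Rellich and get its positivity from Fatou, whereas the paper uses the spatial means $\overline\theta_j$, with Jensen for the lower bound and the mean-zero Poincar\'e inequality for convergence.
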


\begin{proof}
Set
$$
 \mathcal D(t):=\int_\Omega Z(t)^\top\LL(z(t),\theta(t))Z(t)\,\mathrm dx.
$$
All pointwise-in-time statements below are valid on one common full-measure
set: the constitutive relations follow there from
\eqref{eq:weak-murho}--\eqref{eq:weak-mueta} by testing with a countable
dense subset of $H^1(\Omega)$, while the energy decomposition, the uniform
orbit bounds, and the entropy dissipation hold there by
Definition~\ref{def:weak-solution} and
Lemma~\ref{lem:long-time-dissipation}.  Since $\mathcal D\in L^1(0,\infty)$,
$$
 \int_j^{j+1}\mathcal D(t)\,\mathrm dt\longrightarrow0.
$$
We may therefore choose
$t_j\in(j,j+1)$ in the above full-measure set such that
$$
 \mathcal D(t_j)
 \le 2\int_j^{j+1}\mathcal D(t)\,\mathrm dt.
$$
Consequently,
\begin{equation}
 \mathcal D(t_j)\longrightarrow0.
 \label{eq:selected-dissipation-zero}
\end{equation}
For brevity, write
$$
 z_j=z(t_j),\quad \theta_j=\theta(t_j),\quad
 \mu_{\rho,j}=\mu_\rho(t_j),\quad
 \mu_{\eta,j}=\mu_\eta(t_j),\quad
 \lambda_j=\lambda_{t_j}.
$$
Assumption~\ref{ass:Onsager} and
\eqref{eq:selected-dissipation-zero} imply
\begin{equation}
 \|\nabla\mu_{\rho,j}\|_{L^2}
 +\|\nabla\theta_j\|_{L^2}
 +\|\mu_{\eta,j}\|_{L^2}\longrightarrow0.
 \label{eq:selected-forces-zero}
\end{equation}

We first identify the inverse-temperature limit.  Let
$\overline\theta_j=|\Omega|^{-1}\int_\Omega\theta_j\,\mathrm dx$.
The uniform $L^1$ bound gives $\overline\theta_j\le C$.  Since
$s\mapsto s^{-q}$ is convex, Jensen's inequality and
\eqref{eq:uniform-long-time-orbit} give
$$
 (\overline\theta_j)^{-q}
 \le |\Omega|^{-1}\int_\Omega\theta_j^{-q}\,\mathrm dx\le C,
$$
so $\overline\theta_j\ge c>0$.  After extraction,
$\overline\theta_j\to\theta_\infty$ for some $\theta_\infty>0$.
The mean-zero Poincar\'e inequality and
\eqref{eq:selected-forces-zero} then yield
\begin{equation}
 \|\theta_j-\theta_\infty\|_{H^1}
 \le C\|\nabla\theta_j\|_{L^2}
   +|\Omega|^{1/2}|\overline\theta_j-\theta_\infty|
 \longrightarrow0,
 \label{eq:theta-snapshot-H1}
\end{equation}
which proves \eqref{eq:long-time-theta}.

The uniform orbit bounds and Rellich's theorem give, after a further
extraction,
\begin{equation}
 z_j\rightharpoonup z_\infty\quad\text{in }H^1(\Omega)^2,
 \qquad
 z_j\to z_\infty\quad\text{in }L^p(\Omega)^2
 \quad(1\le p<6).
 \label{eq:z-snapshot-precompactness}
\end{equation}
The generalised Poincar\'e inequality and the uniform $L^1$ bound show that
$\theta_j$ is uniformly bounded in $L^2$.  Testing the $\rho$ constitutive
relation by $1$ gives
$$
 \overline\mu_{\rho,j}
 :=|\Omega|^{-1}\int_\Omega\mu_{\rho,j}\,\mathrm dx
 =|\Omega|^{-1}\int_\Omega\partial_\rho\psi(z_j,\theta_j)\,\mathrm dx.
$$
Using \eqref{eq:phase-growth}, the $H^1\hookrightarrow L^6$ bound for
$z_j$, and the $L^2$ bound for $\theta_j$, we obtain
\begin{align*}
 |\overline\mu_{\rho,j}|
 &\le C\left(1+\|z_j\|_{L^3}^3
 +\|\theta_j\|_{L^2}
   \bigl(1+\|z_j\|_{L^6}^3\bigr)\right)\le C.
\end{align*}
Thus, after extraction,
$\overline\mu_{\rho,j}\to\mu_{\rho,\infty}\in\mathbb R$.
Poincar\'e's inequality and \eqref{eq:selected-forces-zero} give
\begin{equation}
 \mu_{\rho,j}\to\mu_{\rho,\infty}
 \quad\text{strongly in }H^1(\Omega),
 \qquad
 \mu_{\eta,j}\to0
 \quad\text{strongly in }L^2(\Omega).
 \label{eq:chemical-snapshot-convergence}
\end{equation}

For $\sigma\in\{\rho,\eta\}$, the explicit structure
$\partial_\sigma\psi=-\partial_\sigma F_0
 +\theta\partial_\sigma F_1$, the polynomial growth, and
\eqref{eq:theta-snapshot-H1}--\eqref{eq:z-snapshot-precompactness} imply
\begin{equation}
 \partial_\sigma\psi(z_j,\theta_j)
 \longrightarrow
 \partial_\sigma\psi(z_\infty,\theta_\infty)
 \quad\text{strongly in }L^{6/5}(\Omega).
 \label{eq:stationary-nonlinearity-convergence}
\end{equation}
Indeed,
$$
 |DF_k(z_j)-DF_k(z_\infty)|
 \le C(1+|z_j|^2+|z_\infty|^2)|z_j-z_\infty|,
$$
so the difference converges in $L^{6/5}$ by the boundedness of the
quadratic factor in $L^3$ and the strong $L^2$ convergence of $z_j$.
Furthermore,
$$
 \|(\theta_j-\theta_\infty)DF_1(z_j)\|_{L^{6/5}}
 \le\|\theta_j-\theta_\infty\|_{L^6}
      \|DF_1(z_j)\|_{L^{3/2}}\longrightarrow0.
$$

Passing to the limit in the two constitutive relations at the times $t_j$
now gives \eqref{eq:stationary-rho}--\eqref{eq:stationary-eta}.  To improve
\eqref{eq:z-snapshot-precompactness} to strong $H^1$ convergence, subtract
the limiting $\rho$ relation from the relation at $t_j$ and test by
$\rho_j-\rho_\infty$.  We obtain
\begin{align*}
 \gamma_\rho\|\nabla(\rho_j-\rho_\infty)\|_{L^2}^2
 ={}&(\mu_{\rho,j}-\mu_{\rho,\infty},
       \rho_j-\rho_\infty)-(\partial_\rho\psi(z_j,\theta_j)
   -\partial_\rho\psi(z_\infty,\theta_\infty),
       \rho_j-\rho_\infty).
\end{align*}
The first term tends to zero by
\eqref{eq:chemical-snapshot-convergence}; the second tends to zero by
\eqref{eq:stationary-nonlinearity-convergence} and the uniform $L^6$ bound
for $\rho_j-\rho_\infty$.  The same argument for $\eta$, using
$\mu_{\eta,j}\to0$, yields
$$
 z_j\to z_\infty\quad\text{strongly in }H^1(\Omega)^2.
$$
This proves \eqref{eq:long-time-rho}--\eqref{eq:long-time-eta};
\eqref{eq:long-time-murho}--\eqref{eq:long-time-mueta} follow from
\eqref{eq:chemical-snapshot-convergence}.  Mass conservation and strong
$L^2$ convergence give \eqref{eq:stationary-mass}.

It remains to identify the limiting energy.  Define the non-negative
measures
$$
 \Xi_j:=\eps_q\theta_j^{-q}\,\mathrm dx+\lambda_j.
$$
They have uniformly bounded mass by
\eqref{eq:uniform-long-time-orbit}; hence, after extraction,
$\Xi_j\stackrel{*}{\rightharpoonup}\Xi_\infty$ in
$\mathcal M_+(\overline\Omega)$.  Since
$\theta_j\to\theta_\infty$ almost everywhere, Fatou's lemma gives, for
every non-negative $\phi\in C(\overline\Omega)$,
$$
 \int_\Omega\eps_q\theta_\infty^{-q}\phi\,\mathrm dx
 \le\liminf_{j\to\infty}
 \int_\Omega\eps_q\theta_j^{-q}\phi\,\mathrm dx
 \le\int_{\overline\Omega}\phi\,\mathrm d\Xi_\infty.
$$
Consequently,
$$
 \Lambda_\infty
 :=\Xi_\infty-\eps_q\theta_\infty^{-q}\,\mathrm dx
 \in\mathcal M_+(\overline\Omega),
$$
and \eqref{eq:long-time-critical-measure} follows.

Finally, write $e=e_\reg+\eps_q\theta^{-q}$ as in
\eqref{eq:energy-split}.  The strong $H^1$ convergence of $z_j$ gives
$F_1(z_j)\to F_1(z_\infty)$ in $L^1$.  Moreover,
$\theta_j^{-1}$ is bounded in $L^q$ and converges almost everywhere, so it
converges strongly in $L^1$ by Vitali's theorem.  The positive logarithmic
part is one-Lipschitz in $\theta$, while
$[(-\ln s)^+]^p\le C_{p,q}(1+s^{-q})$ for some $p>1$; hence both logarithmic
parts converge strongly in $L^1$.  Therefore
\begin{equation}
 e_\reg(z_j,\theta_j)\to
 e_\reg(z_\infty,\theta_\infty)
 \quad\text{strongly in }L^1(\Omega).
 \label{eq:long-time-regular-energy}
\end{equation}
At the selected times, energy conservation reads
$$
 \int_\Omega e_\reg(z_j,\theta_j)\,\mathrm dx
 +\Xi_j(\overline\Omega)=E_0.
$$
Pass to the limit using \eqref{eq:long-time-critical-measure} and
\eqref{eq:long-time-regular-energy} to obtain
\eqref{eq:stationary-energy-with-defect}.
\end{proof}

\begin{remark}
 Under stronger regularity assumptions, it should also be possible to estimate the convergence speed towards the limit solutions by using Łojasiewicz–Simon inequalities \cite{CHILL2003572} similarly to the Cahn-Hilliard case \cite{Rybka1999}.   
\end{remark}

\section{Summary and Outlook}

In this work, we have developed an existence, stability, and long-time
theory for a thermodynamically consistent non-isothermal phase-field system
with two order parameters and a fully non-diagonal Onsager coupling.  The
main analytical difficulty is the simultaneous treatment of positivity of
the inverse temperature, the nonlinear thermal constitutive relations, and
the weak compactness available for the internal energy.  To address this,
we introduced a dimension-adapted singular thermal contribution and used
the availability as the principal coercive functional.  At the discrete
level, this structure yields exact conservation of mass and internal
energy, a discrete entropy inequality, and uniform estimates compatible
with continuous thermodynamics.  The inverse-moment bound gives a
strictly positive inverse temperature on every fixed finite-element mesh,
with the threshold $q\ge d$ being sharp for the corresponding nodal
barrier argument.  Together with a Brouwer degree construction, this
provides global discrete trajectories.  Compactness can then be obtained
for arbitrary $h,\tau\to0$, without any coupling condition between the
space and time discretisation parameters, and leads to the global
generalised dissipative weak solutions of
Theorem~\ref{thm:convergence}.

The resulting solution concept isolates precisely the possible loss of
compactness in the thermal energy.  All lower-order thermal contributions
are strongly compact, whereas the critical term
$\eps_q\theta^{-q}$ may concentrate and is therefore represented by the
non-negative defect measure $\lambda$.  In this sense, the defect is
confined to the single constitutive quantity for which the available
bounds are critical.  Proposition~\ref{prop:no-defect} shows that it
disappears as soon as additional equi-integrability of the singular energy
is available.  Moreover, the relative-entropy inequality shows that the
generalised solution concept is compatible with the strong theory:
within a common bounded thermodynamic state range, a generalised
dissipative weak solution coincides with a sufficiently regular strong
solution emanating from the same initial data, and the defect measure
vanishes; see Theorem~\ref{thm:bounded-range-wsu}.

The entropy--availability structure also provides information beyond
finite-time existence.  Global trajectories have finite total Onsager
dissipation and remain uniformly bounded in the natural energy variables.
Consequently, every global generalised dissipative weak solution possesses
a sequence $t_j\to\infty$ along which the phase variables and inverse
temperature converge strongly in $H^1(\Omega)$ to a stationary state,
while the chemical potentials converge to their equilibrium values; see
Theorem~\ref{thm:stationary-omega-limit}.  The limiting inverse temperature
is a positive spatial constant.  At the level of the critical thermal
energy, a non-negative asymptotic defect may remain, accounting both for concentration already present in $\lambda_{t_j}$ and for concentration
generated along the selected long-time sequence. Thus, the present
argument identifies stationary $\omega$-limit states but does not in
general assert convergence of the whole trajectory to a single
equilibrium.

Several questions remain open.  First, the parameters $\eps_1$ and
$\eps_q$ in the modified free energy are fixed throughout the present
analysis.  Passing to the unmodified constitutive model by letting
$\eps_1,\eps_q\downarrow0$ would require uniform estimates in these
parameters and separate identification of singular thermal
contributions.  Alternatively, comparable thermal coercivity may be
generated through sufficiently strong temperature-dependent heat
conductivities.  Second, the bounded-range hypothesis in the
weak--strong uniqueness theorem is used to obtain uniform local
coercivity and quadratic control of the nonlinear remainders.  Removing
this assumption, for instance, through an essential--residual
decomposition adapted to the availability, would substantially strengthen
the stability theory.  Finally, a natural long-time question is whether
additional assumptions on the set of stationary states allow one to
upgrade subsequential convergence to convergence of the entire trajectory
and to eliminate the asymptotic defect.  Under suitable regularity and
isolation hypotheses, a possible approach is provided by
Lojasiewicz--Simon inequalities, which may also yield quantitative
convergence rates. 

\section*{Acknowledgment}
 \noindent A.B. has been supported by the Deutsche Forschungsgemeinschaft (DFG, German Research Foundation) by the SPP 2256 "Variational Methods for Predicting Complex Phenomena in Engineering Structures and Materials" under Project No. 441153493 and by the Mainz
Institute of Multiscale Modeling M3odel.

\bibliographystyle{elsarticle-num}
\bibliography{literature}

\end{document}